\documentclass[12pt,american,english]{article}
\usepackage[T1]{fontenc}
\usepackage[latin9]{inputenc}
\usepackage{geometry}
\geometry{verbose,tmargin=3cm,bmargin=3cm,lmargin=3cm,rmargin=3cm}
\usepackage{babel}
\usepackage{amsmath}
\usepackage{amsthm}
\usepackage{amssymb}
\usepackage{stmaryrd}
\usepackage{graphicx}
\usepackage[all]{xy}
\usepackage[unicode=true,pdfusetitle,
 bookmarks=true,bookmarksnumbered=false,bookmarksopen=false,
 breaklinks=false,pdfborder={0 0 1},backref=false,colorlinks=false]
 {hyperref}

\makeatletter
\numberwithin{equation}{section}
\theoremstyle{plain}
\newtheorem{thm}{\protect\theoremname}[section]
\theoremstyle{remark}
\newtheorem*{acknowledgement*}{\protect\acknowledgementname}
\theoremstyle{definition}
\newtheorem{defn}[thm]{\protect\definitionname}
\theoremstyle{remark}
\newtheorem{notation}[thm]{\protect\notationname}
\theoremstyle{remark}
\newtheorem{rem}[thm]{\protect\remarkname}
\theoremstyle{plain}
\newtheorem{lem}[thm]{\protect\lemmaname}
\theoremstyle{plain}
\newtheorem{prop}[thm]{\protect\propositionname}
\theoremstyle{plain}
\newtheorem{cor}[thm]{\protect\corollaryname}
\theoremstyle{plain}
\newtheorem{assumption}[thm]{\protect\assumptionname}
\theoremstyle{definition}
\newtheorem{example}[thm]{\protect\examplename}
\theoremstyle{remark}
\newtheorem{claim}[thm]{\protect\claimname}


\theoremstyle{plain}

\usepackage{amssymb}
\usepackage{lmodern}
\date{}

\makeatother

\addto\captionsamerican{\renewcommand{\acknowledgementname}{Acknowledgement}}
\addto\captionsamerican{\renewcommand{\assumptionname}{Assumption}}
\addto\captionsamerican{\renewcommand{\claimname}{Claim}}
\addto\captionsamerican{\renewcommand{\corollaryname}{Corollary}}
\addto\captionsamerican{\renewcommand{\definitionname}{Definition}}
\addto\captionsamerican{\renewcommand{\examplename}{Example}}
\addto\captionsamerican{\renewcommand{\lemmaname}{Lemma}}
\addto\captionsamerican{\renewcommand{\notationname}{Notation}}
\addto\captionsamerican{\renewcommand{\propositionname}{Proposition}}
\addto\captionsamerican{\renewcommand{\remarkname}{Remark}}
\addto\captionsamerican{\renewcommand{\theoremname}{Theorem}}
\addto\captionsenglish{\renewcommand{\acknowledgementname}{Acknowledgement}}
\addto\captionsenglish{\renewcommand{\assumptionname}{Assumption}}
\addto\captionsenglish{\renewcommand{\claimname}{Claim}}
\addto\captionsenglish{\renewcommand{\corollaryname}{Corollary}}
\addto\captionsenglish{\renewcommand{\definitionname}{Definition}}
\addto\captionsenglish{\renewcommand{\examplename}{Example}}
\addto\captionsenglish{\renewcommand{\lemmaname}{Lemma}}
\addto\captionsenglish{\renewcommand{\notationname}{Notation}}
\addto\captionsenglish{\renewcommand{\propositionname}{Proposition}}
\addto\captionsenglish{\renewcommand{\remarkname}{Remark}}
\addto\captionsenglish{\renewcommand{\theoremname}{Theorem}}
\providecommand{\acknowledgementname}{Acknowledgement}
\providecommand{\assumptionname}{Assumption}
\providecommand{\claimname}{Claim}
\providecommand{\corollaryname}{Corollary}
\providecommand{\definitionname}{Definition}
\providecommand{\examplename}{Example}
\providecommand{\lemmaname}{Lemma}
\providecommand{\notationname}{Notation}
\providecommand{\propositionname}{Proposition}
\providecommand{\remarkname}{Remark}
\providecommand{\theoremname}{Theorem}

\begin{document}
\selectlanguage{american}%
\global\long\def\RR{\mathbb{R}}%
\global\long\def\CC{\mathbb{C}}%
\global\long\def\HH{\mathbb{H}}%
\global\long\def\NN{\mathbb{N}}%
\global\long\def\ZZ{\mathbb{Z}}%
\global\long\def\QQ{\mathbb{Q}}%

\global\long\def\gam{\Gamma}%

\global\long\def\z{\zeta}%
\global\long\def\e{\epsilon}%
\global\long\def\a{\alpha}%
\global\long\def\b{\beta}%
\global\long\def\ga{\gamma}%
\global\long\def\ph{\varphi}%
\global\long\def\om{\omega}%
\global\long\def\lm{\lambda}%
\global\long\def\dl{\delta}%
\global\long\def\s{\sigma}%
\foreignlanguage{english}{}
\global\long\def\little{\varepsilon}%

\selectlanguage{english}%

\global\long\def\exd#1#2{\underset{#2}{\underbrace{#1}}}%
\global\long\def\exup#1#2{\overset{#2}{\overbrace{#1}}}%

\global\long\def\leb#1{\operatorname{Leb(#1)}}%

\global\long\def\haar{\operatorname{Haar}}%

\global\long\def\norm#1{\left\Vert #1\right\Vert }%

\global\long\def\Onevec{\underline{1}}%
\global\long\def\One{\mathbf{1}}%

\global\long\def\porsmall{\prec}%
\global\long\def\porbig{\succ}%

\global\long\def\diffeo{\simeq}%

\global\long\def\lra{\longrightarrow}%
\global\long\def\del{\partial}%
\global\long\def\dprime{\prime\prime}%

\global\long\def\manifold{\mathcal{M}}%
\global\long\def\base{\mathbf{B}}%

\global\long\def\sbgrp{H}%

\global\long\def\transpose{\mbox{t}}%

\global\long\def\norm#1{\left\Vert #1\right\Vert }%
\global\long\def\lilnorm#1{\Vert#1\Vert}%

\global\long\def\brac#1{(#1)}%
\global\long\def\vbrac#1{|#1|}%
\global\long\def\sbrac#1{[#1]}%
\global\long\def\dbrac#1{\langle#1\rangle}%
\global\long\def\cbrac#1{\{#1\}}%

\global\long\def\gl#1{\operatorname{GL}_{#1}}%

\global\long\def\sl#1{\operatorname{SL}_{#1}}%

\global\long\def\so#1{\operatorname{SO}_{#1}}%

\global\long\def\ort#1{\operatorname{O}_{#1}}%

\global\long\def\pgl#1{\operatorname{PGL}_{#1}}%

\global\long\def\po#1{\operatorname{PO}_{#1}}%

\global\long\def\Lat{\Gamma}%
\global\long\def\disgrp{\gam}%
\global\long\def\wc{Q}%

\global\long\def\prim{\operatorname{prim}}%

\global\long\def\diag#1{\operatorname{diag}\brac{#1}}%

\global\long\def\rank#1{\operatorname{rank}\left(#1\right)}%

\global\long\def\vol{\operatorname{vol}}%

\global\long\def\prob{\operatorname{prob}}%

\global\long\def\covol#1{\operatorname{covol}\brac{#1}}%

\global\long\def\sym{\mbox{Sym}}%

\global\long\def\sp#1#2{\mbox{span}_{#1}\brac{#2}}%

\global\long\def\id{\operatorname{id}}%

\global\long\def\idmat#1{\operatorname{I}_{#1}}%

\global\long\def\comp#1{\operatorname{#1}^{c}}%

\global\long\def\trunc#1#2{#1^{#2}}%

\global\long\def\topindex{q}%

\global\long\def\nbhd#1#2{\mathcal{O}_{#1}^{#2}}%

\global\long\def\symfund#1{F_{#1}}%
\global\long\def\groupfund#1{\widetilde{F_{#1}}}%

\global\long\def\symfundrec#1#2{F_{#1}^{\left(#2\right)}}%
\global\long\def\groupfundrec#1#2{\widetilde{F_{#1}}^{\left(#2\right)}}%

\global\long\def\Gset{\mathcal{B}}%
\global\long\def\latset{\Psi}%
\global\long\def\pairset{\varXi}%
\global\long\def\symset{\mathcal{E}}%
\global\long\def\factorset{\mathcal{F}}%
\global\long\def\groupfacset{\widetilde{\factorset}}%
\global\long\def\groupset{\widetilde{\symset}}%
\global\long\def\sphereset{\Phi}%
\global\long\def\domN{D}%

\global\long\def\funddom{\Omega}%

\global\long\def\parby#1#2{#1_{#2}}%

\global\long\def\ball#1{B_{#1}}%

\global\long\def\GIcomp{S}%
\global\long\def\cube{\square}%

\global\long\def\sphere#1{\mathbb{S}^{#1}}%

\global\long\def\gras#1{\operatorname{Gr}(#1)}%

\global\long\def\latspace#1{\mathcal{L}_{#1}}%

\global\long\def\unilatspace#1{\mathcal{L}_{#1}}%

\global\long\def\shapespace#1{\mathcal{X}_{#1}}%

\global\long\def\pairspace#1{\mathcal{P}_{#1}}%

\global\long\def\flagspace#1{\mathcal{F}_{#1}}%

\global\long\def\svec{\underline{s}}%
\global\long\def\Svec{\underline{S}}%

\global\long\def\sumS{\mathbf{S}}%
\global\long\def\sums{\mathbf{s}}%

\global\long\def\wvec{\underline{w}}%
\global\long\def\Wvec{\underline{W}}%

\global\long\def\sumW{\mathbf{W}}%
\global\long\def\sumw{\mathbf{w}}%

\global\long\def\roundo{r}%

\global\long\def\errexp{\tau}%

\global\long\def\lat{\Lambda}%
\global\long\def\qlat{L}%
\global\long\def\latfull{\Delta}%

\global\long\def\cov{X}%

\global\long\def\shape#1{\operatorname{shape}\left(#1\right)}%

\global\long\def\unilat#1{\left[#1\right]}%

\global\long\def\unisimlat#1{\left\llbracket #1\right\rrbracket }%

\global\long\def\perpen#1{#1^{\perp}}%
\global\long\def\perpeng#1#2{#1^{\perp,#2}}%

\global\long\def\factor#1{#1^{\pi}}%
 
\global\long\def\factorg#1#2{#1^{\pi,#2}}%

\global\long\def\dual#1{#1^{*}}%

\global\long\def\latlast#1#2{#1^{\underleftarrow{#2}}}%

\global\long\def\based{\mathbf{D}}%
\global\long\def\basec{\mathbf{C}}%
\global\long\def\halfK#1{\Upsilon(#1)}%

\global\long\def\pair#1{\operatorname{pair}\left(#1\right)}%

\title{Equidistribution of primitive lattices in $\RR^{n}$}
\author{Tal Horesh\thanks{IST Austria, \texttt{tal.horesh@ist.ac.at}.} \and
Yakov Karasik\thanks{Technion, Israel, \texttt{theyakov@gmail.com}.}}
\maketitle
\begin{abstract}
We count primitive lattices of rank $d$ inside $\ZZ^{n}$ as their
covolume tends to infinity, with respect to certain parameters of
such lattices. These parameters include, for example, the subsapce
that a lattice spans, namely its projection to the Grassmannian; its
homothety class; and its equivalence class modulo rescaling and rotation,
often referred to as a \emph{shape}. We add to a prior work of Schmidt
by allowing sets in the spaces of parameters that are general enough
to conclude joint equidistribution of these parameters. In addition
to the primitive $d$-lattices $\lat$ themselves, we also consider
their orthogonal complements in $\ZZ^{n}$, $\perpen{\lat}$, and
show that the equidistribution occurs jointly for $\lat$ and $\perpen{\lat}$.
Finally, our asymptotic formulas for the number of primitive lattices
include an explicit error term. 

\tableofcontents{}
\end{abstract}

\section{Introduction\label{sec: Introduction}}

The aim of this paper is to extend classical counting and equidistribution
results for \emph{primitive vectors} to their higher rank counterparts:
\emph{primitive lattices}. A primitive vector is an $n$-tuple of
integers $\brac{a_{1},\ldots,a_{n}}$ with $\gcd\brac{a_{1},\ldots,a_{n}}=1$,
and the the set of primitive vectors in $\RR^{n}$ is denoted by $\ZZ_{\prim}^{n}$.
We can associate to each vector $0\neq v\in\RR^{n}$ the discrete
subgroup that it spans, $\ZZ v$; following this logic, a rank $d$
($1\leq d\leq n$) analog for a vector is  a \emph{lattice of rank
$d$} in $\RR^{n}$, namely
\[
\lat=\ZZ v_{1}\oplus\cdots\oplus\ZZ v_{d}
\]
where $v_{1},\ldots,v_{d}\in\RR^{n}$ are linearly independent. We
will refer to it briefly as a \emph{$d$-lattice}. We say that a $d$-lattice
$\lat$ is \emph{integral} if $\lat\subset\ZZ^{n}$, and \emph{primitive}
if $\lat=V\cap\ZZ^{n}$, where $V$ is a $d$-dimensional rational
subspace of $\RR^{n}$. For example, a primitive $1$-lattice is simply
all the integral points on a rational line, or, equivalently, it is
$\ZZ v$ where $v$ is a primitive vector. 

Questions about counting  primitive vectors date back to the days
of Gauss and Dirichlet, e.g.\ with the \emph{Gauss Class Number problem}.
In the 20th century, questions about equidistribution of integral
vectors began to arise, with the principal example being \emph{Linnik-type
problems} \cite{Linnik_68,Erdos_Hall_99,Duke_03,Duke_07,ELMV_11,Benoist_Oh_12,EMV_13}.
These questions and others generalize naturally to  primitive lattices,
as we now turn to describe. 

\paragraph*{The Primitive Circle Problem.}

The well known Gauss Circle Problem concerns the asymptotic number
of integral vectors up to (euclidean) norm $X>0$. The analogous question
for primitive vectors, namely the asymptotic amount of primitive vectors
up to norm $X$, is often referred to as the \emph{primitive circle
problem} \cite{Nowak88,ZC99,Wu02}. In lattices, the role of a norm
is played by the \emph{covolume}: the covolume of $\lat$, denoted
$\covol{\lat}$, is the volume of a fundamental paralelopiped for
$\lat$ in the linear space 
\[
V_{\lat}:=\sp{\RR}{\lat}.
\]
Thus, the \textbf{\emph{primitive circle problem for lattices}} is
to estimate the asymptotics of
\begin{equation}
\#\{\text{primitive \ensuremath{d}-lattices in \ensuremath{\RR^{n}} of covolume up to \ensuremath{X}}\}\label{eq: Primitive Circle Problem}
\end{equation}
as $X\to\infty$. Note that for $1$-lattices, the notions of norm
and covolume coincide: $\covol{\ZZ v}=\norm v$, hence when $d=1$
the above recovers the ``original'' primitive circle problem. Schmidt
\cite{Schmidt_68} showed that the amount in (\ref{eq: Primitive Circle Problem})
equals 
\[
c_{d,n}X^{n}+O(X^{n-\max\{\frac{1}{d},\frac{1}{n-d}\}}),
\]
where 
\[
c_{d,n}=\frac{1}{n}{n \choose d}\cdot\frac{\prod_{i=n-d-1}^{n}\leb{\ball i}}{\prod_{j=1}^{d}\leb{\ball j}}\cdot\frac{\prod_{i=2}^{d}\zeta\left(i\right)}{\prod_{j=n-d+1}^{n}\zeta\left(j\right)},
\]
$\text{Leb}$ is the Lebesgue measure, and $\ball i$ is the unit
ball in $\RR^{i}$. We remark that the optimal exponent in the error
term of the circle problem (primitive or not) is  established only
in dimensions $n\geq4$. As far as the authors are aware, this ($d=1$,
$n\geq4$) is the only case where an optimal error exponent is known
for the lattices circle problem (primitive or not). 

\paragraph*{Linnik-type problems.}

This is a unifying name for questions on the distribution of the projections
of integral vectors to the unit sphere, i.e. of $v/\norm v$ when
$v\in\ZZ^{n}\text{ or \ensuremath{\ZZ_{\prim}^{n}}}$.  Viewing the
unit sphere as the space of oriented lines in $\RR^{n}$, the analogous
object when considering $d$-lattices would be the \emph{Grassmannian}
of oriented $d$-dimensional subspaces in $\RR^{n}$, denoted $\gras{d,n}$
(and defined explicitly in Section \ref{sec: Background on Lattices}).
Accordingly, we will view our lattices as carrying an orientation,
which simplifies our discussion on the technical level but has no
effect on the results. In particular, the two-to-one correspondence
between primitive vectors and primitive $1$-lattices (arising from
the fact that $v$ and $-v$ span the same lattice) becomes a one-to-one
correspondence between primitive vectors and oriented primitive $1$-lattices.
The \textbf{\emph{average Linnik problem for primitive lattices}}
is to study the distribution of the (oriented) spaces $V_{\lat}$
in $\gras{d,n}$ as $\covol{\lat}\leq X\to\infty$. Note that $V_{\lat}$
are exactly the rational subspaces in $\gras{d,n}$. 

\paragraph*{Shapes of orthogonal lattices.}

More recently, with the rise of dynamical approaches in number theory,
another type of equidistribution questions for primitive vectors arose.
To a primitive vector $v$ we associate the $(n-1)$-lattice $\perpen v\cap\ZZ^{n}$,
referred to as as the \emph{orthogonal lattice} of $v$, where $\perpen v$
is the orthogonal hyperplane to $v$. Several recent papers (by Marklof
\cite{Marklof_10}, Aka Einsiedler and Shapira \cite{AES_16A,AES_16B},
Einsiedler, Mozes, Shah and Shapira \cite{EMSS_16}, Einsiedler R�hr
and Wirth \cite{ERW17}) studied the equidistribution of \emph{shapes}
of the orthogonal lattices to primitive vectors as their norm tends
to infinity, where the shape of a lattice is its similarity class
modulo rotation and homothety. The space of shapes of $d$-lattices
is a double coset space of $\sl d(\RR)$, denoted $\shapespace d$
and defined explicitly in Section \ref{sec: Background on Lattices},
and the aforementioned papers show (among other things) that the shapes
of the orthogonal lattices to $v\in\ZZ_{\prim}^{n}$ equidistribute
in $\shapespace{n-1}$ as $\norm v\to\infty$ w.r.t.\ a uniform measure
arriving from the Haar measure on $\sl{n-1}(\RR)$. In the works of
Einsiedler et al.\, they in fact show that the shapes of $\perpen v\cap\ZZ^{n}$
equidistribute in $\shapespace{n-1}$ \emph{jointly} with the directions
$v/\norm v$ in $\sphere{n-1}$.

Just like for (primitive) vectors, orthogonal lattices can be defined
for (primitive) lattices as well: for a primitive $d$-lattice $\lat$,
we let:
\[
\perpen{\lat}:=\perpen{V_{\lat}}\cap\ZZ^{n},
\]
where $\perpen{V_{\lat}}$ is the orthogonal complement of $V_{\lat}$.
Note that $\perpen{\lat}$ is primitive by definition, and that it
has rank $n-d$. Also note that $\lat\mapsto\perpen{\lat}$ defines
a bijection between primitive lattices of ranks $d$ and $n-d$.
This bijetion extends to a bijection between oriented lattices, with
a natural choice of orientation on the orthogonal lattice (Definition
\ref{def: orientation in orthogonal}). 

One could then ask about the equidistribution of shapes of the orthogonal
lattices $\perpen{\lat}$ to primitive lattices $\lat$, where the
one dimensional case $\lat=\ZZ v$ recovers the question studied in
the aforementioned papers about the equidistribution of shapes of
$\perpen v\cap\ZZ^{n}$. 

Equidistribution of a sequence in a finite-volume space can be deduced
from counting in ``sufficiently general'' subsets of this space.
Indeed, we will count in subsets that have \emph{controlled boundary}
(Definition \ref{def: BCS}).

\begin{thm}[A consequence of Theorems \ref{thm: A counting} and \ref{thm: counting orthogonal lattices}]
\label{thm: Intro}Assume that $\sphereset\subseteq\gras{d,n}$ and
$\symset\times\factorset\subseteq\shapespace d\times\shapespace{n-d}$
are BCS's. Then, the number of primitive $d$-lattices $\lat$ with
$\covol{\lat}\leq X$, $V_{\lat}\in\sphereset$ and $\brac{\shape{\lat},\shape{\perpen{\lat}}}\in\symset\times\factorset$
is 
\[
\frac{\vol\brac{\symset}\vol\brac{\factorset}\vol\brac{\sphereset}}{n\cdot\prod_{i=2}^{n}\zeta\left(i\right)}\halfK d\halfK{n-d}\cdot X^{n}+O_{\e}\brac{X^{n-\frac{\errexp_{n}n}{\b}+\e}}
\]
for every $\e>0$, where $\errexp_{n}=\left\lceil \left(n-1\right)/2\right\rceil /4n^{2}$,
\[
\halfK d:=\frac{d!\prod_{i=2}^{d}\leb{B_{i}}}{\left[\so d\left(\RR\right):Z\left(\so d\left(\RR\right)\right)\right]}
\]
(in the denominator is the index of the center of $\so d\left(\RR\right)$,
which is either $1$ or $2$), and 
\[
\b=\begin{cases}
2\left(n-d-1\right)\left(n^{2}-1\right)+n^{2} & \text{when \ensuremath{\symset} is bounded}\\
2\left(d-1\right)\left(n^{2}-1\right)+n^{2} & \text{when \ensuremath{\factorset} is bounded}\\
2\left(\max\left\{ d,n-d\right\} -1\right)\left(n^{2}-1\right)+n^{2} & \text{when neither of \ensuremath{\symset,\factorset} is bounded}\\
1 & \text{when both \ensuremath{\symset,\factorset} are bounded}
\end{cases}.
\]
\end{thm}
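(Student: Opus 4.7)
The plan is to parameterize primitive oriented $d$-lattices by integer matrices and then apply effective lattice-point counting in a fibered homogeneous space. Identify $\lat$ with the $\gl d^{+}(\ZZ)$-orbit of any integer matrix $M\in M_{n\times d}(\ZZ)$ whose columns generate $\lat$; then the covolume is $\sqrt{\det(M^{\transpose}M)}$, the oriented span $V_{\lat}$ lies in $\gras{d,n}$, the shape is the class of $M^{\transpose}M$ in $\shapespace d$, and the orthogonal lattice $\perpen{\lat}$ is determined by $V_{\lat}$ but carries its own shape in $\shapespace{n-d}$.

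First I would establish Theorem A, the count of primitive $d$-lattices with $\covol{\lat}\leq X$, $V_{\lat}\in\sphereset$ and $\shape{\lat}\in\symset$. The strategy is to count integer (not necessarily primitive) matrices $M$ satisfying the above constraints as lattice points in a BCS-region of $M_{n\times d}(\RR)$ dilated by $X^{1/d}$; after fixing a Siegel fundamental domain for the left $\gl d(\RR)$-action, this reduces to counting points in a family of expanding BCS-regions fibered over $\gras{d,n}\times\shapespace d\times\RR_{>0}$. Effective equidistribution of translated orbits on $\sl n(\RR)/\sl n(\ZZ)$ (via Eskin--McMullen or Duke--Rudnick--Sarnak type methods) would produce a main term proportional to $\vol(\sphereset)\vol(\symset)\halfK d X^{n}$ with a power-saving error, the constant $\halfK d$ recording the volume of the Siegel fundamental domain modulo the center of $\so d(\RR)$. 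Passage from integer to primitive lattices is then achieved by a M\"obius-type inversion which accounts for the $\prod_{i=2}^{n}\zeta(i)^{-1}$ factor.

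Second, I would prove the orthogonal counting by exploiting the bijection $\lat\mapsto\perpen{\lat}$ between oriented primitive $d$-lattices and oriented primitive $(n-d)$-lattices. Under it $\covol{\perpen{\lat}}=\covol{\lat}$, $V_{\perpen{\lat}}=\perpen{V_{\lat}}$ in $\gras{n-d,n}$, and $\shape{\perpen{\lat}}\in\shapespace{n-d}$, so a BCS $\symset\times\factorset\subseteq\shapespace d\times\shapespace{n-d}$ pulls back to a BCS on the moduli space of primitive $d$-lattices. Applying Theorem A simultaneously on the $d$-side (with shape constraint $\symset$) and on the $(n-d)$-side (with shape constraint $\factorset$) then yields the claimed joint main term $\vol(\symset)\vol(\factorset)\vol(\sphereset)\halfK d\halfK{n-d}X^{n}/\bigl(n\prod_{i=2}^{n}\zeta(i)\bigr)$. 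The piecewise definition of $\b$ reflects which application of Theorem A binds the error: bounded $\symset$ handles the $d$-side cheaply and forces the rate through $\shapespace{n-d}$, bounded $\factorset$ is the reverse, both bounded yields the sharpest exponent $\b=1$, and with neither bounded the larger of the two shape-space dimensions dictates the rate.

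The main technical obstacle is the error exponent $\errexp_{n}=\lceil(n-1)/2\rceil/4n^{2}$, which requires a quantitative equidistribution rate for Hecke-type translates on $\sl n(\RR)/\sl n(\ZZ)$ and must then be propagated through the M\"obius inversion, the orthogonality bijection, and the intersection with the product BCS without deterioration. A secondary difficulty is uniform cusp control: when $\symset$ or $\factorset$ is unbounded, one must estimate the contribution of shapes lying far out in the cusps of $\shapespace d$ or $\shapespace{n-d}$, and it is precisely the interaction between these escape-of-mass estimates and the effective rate on $\sl n$ that produces the factor $(n^{2}-1)$ and the case split in the formula for $\b$.
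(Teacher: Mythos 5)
Your outline diverges from the paper's strategy in ways that are partly a matter of taste but include one genuine gap, which is the passage to the joint count.

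The paper does not obtain the joint equidistribution of $\brac{\shape{\lat},\shape{\perpen{\lat}}}$ by combining a single-shape count on the $d$-side with one on the $(n-d)$-side. Indeed this cannot work: $\lat$ determines $\perpen{\lat}$, so a count of lattices with $\shape{\lat}\in\symset$ and a separate count with $\shape{\perpen{\lat}}\in\factorset$ give no information about the intersection of the two conditions. The independence of the two shapes (which the paper explicitly flags as a surprising feature of Theorem \ref{thm: Intro}) is a \emph{conclusion}, not something you may assume in order to multiply main terms. The paper handles this by introducing the pair space $\pairspace{d,n}$ and proving a counting theorem (Theorem \ref{thm: A counting}, Part \ref{enu: MainThm_restricted pairs}) directly for BCS subsets of that space; the product structure of the measure on $\pairspace{d,n}$ is established separately (Lemma \ref{lem: lift to G''}) and is what ultimately yields the factorization $\vol\brac{\symset}\vol\brac{\factorset}\vol\brac{\sphereset}$. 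Your step ``applying Theorem A simultaneously on the $d$-side and on the $(n-d)$-side'' would need to be replaced by a genuine joint count.

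This gap is in fact built into your chosen parameterization. Working with $M\in M_{n\times d}(\ZZ)$ modulo $\gl d(\ZZ)$ lets you read off $\covol{\lat}$, $V_{\lat}$, and $\shape{\lat}$, but not $\shape{\perpen{\lat}}$: the orthogonal lattice $\perpen{\lat}=\ZZ^{n}\cap\perpen{V_{\lat}}$ depends on the integral structure transverse to the column span of $M$, which an $n\times d$ matrix does not record. The paper instead associates to each primitive $\lat$ a matrix in $\sl n(\ZZ)$ (completing a basis of $\lat$ to a basis of $\ZZ^{n}$), and the Refined Iwasawa decomposition $g=k^{\prime}g^{\dprime}a^{\prime}n^{\prime}$ of such a matrix encodes simultaneously $V_{\lat}$, $\covol{\lat}$, $\shape{\lat}$, and $\shape{\factor{\lat}}$ (Proposition \ref{prop: explicit RI coordinates of g}); the passage from $\factor{\lat}$ to $\perpen{\lat}$ then uses the dual map and its measure-preserving property on $\pairspace{d,n}$. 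The use of full $\sl n(\ZZ)$ matrices also makes primitivity automatic, so no M\"obius inversion is needed; the $\prod_{i=2}^{n}\zeta(i)$ arises as $\vol\brac{\latspace n}$ in the Gorodnik--Nevo main term, not from a sum over divisors.

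Beyond this, your proposed engine (effective equidistribution of translated orbits, Eskin--McMullen/Duke--Rudnick--Sarnak) differs from the paper's (Gorodnik--Nevo counting in a Lipschitz well-rounded family, with an explicit truncation of the cusp in the $A^{\dprime}$ direction). That choice is not by itself an error, but the specific exponents $\errexp_{n}$ and the case split in $\b$ are artifacts of the paper's optimization between the well-roundedness constant of the truncated fundamental domain (which grows like $e^{2(\sumS+\sumW)}$) and the escape-of-mass estimate up the cusp; your sketch asserts the final formula without a mechanism that would produce it. To recover the stated error terms you would need an explicit rate, a quantitative cusp bound, and the balancing argument that the paper carries out in Section \ref{sec: Almost  a proof}.
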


In the above theorem, $\vol$ stands for a uniform Radon measure (that
is unique up to multiplication by a positive scalar and will be defined
in Section \ref{sec: Background on Lattices}), so it implies the
joint uniform distribution of the directions, shapes, and shapes of
the orthogonal lattices of primitive lattices as their covolume tends
to infinity. In particular, it is quite interesting to observe that
the shapes of $\lat$ and of $\perpen{\lat}$ are independent parameters,
meaning that there is no way to know the shape of $\perpen{\lat}$
given the shape of $\lat$, even though the latter lattice determines
the first. 

We remark that the equidistribution of shapes of primitive $d$-lattices
was shown in a non-quantitative form by Schmidt in \cite{Schmidt_98}.

\paragraph{Counting $d$-lattices: Beyond shapes.}

There exists a wide body of work on equidistribution problems in the
space 
\[
\latspace n:=\sl n(\RR)/\sl n(\ZZ),
\]
which is the space of unimodular (i.e.\ with covolume one and positive
orientation) full lattices in $\RR^{n}$, as well as in the space
$\shapespace n$ of their shapes. The restriction to covolume one
is due to the fact that the space $\latspace n$ (and therefore $\shapespace n$)
has finite volume, while the space of all lattices, $\gl n(\RR)/\gl n(\ZZ)$,
does not. Comparing the spaces $\latspace n$ and $\shapespace n$,
the space $\latspace n$ naturally contains ``more information''
than $\shapespace n$, which is obtained by modding $\latspace n$
by rotations. This brings up the question of whether one can define
a space of ``unimodular $d$-lattices in $\RR^{n}$'' so as to consider
the unimodular $d$-lattices in $\RR^{n}$ without modding out by
rotations. In Section \ref{sec: Background on Lattices} we introduce
two such spaces, which are homogeneous spaces of $\sl n(\RR)$, but
do not carry an $\sl n(\RR)$-invariant measure. However, they do
support a natural measure which is invariant under a direct product
of two subgroups of $\sl n(\RR)$. We will prove a stronger statement
than Theorem \ref{thm: Intro}, namely Theorem \ref{thm: A counting},
where we count primitive $d$-lattices according to their projections
to these more refined spaces, from which we conclude Theorem \ref{thm: Intro},
of counting primitive $d$-lattices according to their projections
to $\shapespace d$. 

\paragraph{Organization of the paper.}

The paper is organized as follows: In Section \ref{sec: Background on Lattices}
we define all the spaces and notions related to lattices that appear
throughout this work. In Section \ref{sec: Counting Lattices} we
state Theorem \ref{thm: A counting}, and explain how it implies Theorem
\ref{thm: Intro}; then the rest of the paper is devoted to proving
Theorem \ref{thm: A counting}. In Section \ref{sec: RI coordinates}
we define coordinates on $\sl n(\RR)$ that are a refinement of the
Iwasawa coordinates, and study their properties. In Section \ref{sec: Fundamental domains}
we identify subsets of $\sl n(\RR)$ that are sets of representatives
for all the quotient spaces that appear in Theorem \ref{thm: A counting}.
In Section \ref{sec: Integral matrices representing primitive vectors},
we associate to each primitive $d$-lattice a unique element in $\sl n(\ZZ)$,
transferring the problem of counting primitive $d$-lattices to the
problem of counting points of $\sl n(\ZZ)$ in a family of increasing
sets in $\sl n(\RR)$. These sets are not compact, even though the
number of lattice points in each of the sets is finite. In Section
\ref{sec: Counting lattices up the cusp}, we reduce to counting in
a family of compact subsets, obtaining a ``classical'' counting
lattice point problem in $\sl n(\RR)$. In Section \ref{sec: Almost  a proof}
we state the solution of this counting problem (Proposition \ref{thm: Counting with S(T) and W(T)}),
and prove Theorem \ref{thm: A counting} based on it. In Section \ref{sec: Technical Proof}
we prove Proposition \ref{thm: Counting with S(T) and W(T)} using
a method developed by Gorodnik and Nevo in \cite{GN1}, for counting
lattice points in semi-simple Lie groups. In the appendix, we expand
further on the notions introduced in Section \ref{sec: Background on Lattices},
as some of them are new, and others are not new but possibly unfamiliar
to the reader.
\begin{acknowledgement*}
This work was done when both authors were visiting IST Austria. The
first author was being supported by EPRSC grant EP/P026710/1, and
the second author had a great time there and is grateful for the hospitality.
The appendix to this paper is largely based on a mini course the first
author had given at IST in February 2020. 
\end{acknowledgement*}

\section{\label{sec: Background on Lattices}Spaces of lattices}

We begin by defining explicitly the spaces $\shapespace d$ and $\gras{d,n}$
appearing in Theorem \ref{thm: Intro}. An oriented subspace on $\RR^{n}$
is a subspace with a sign attached, and the Grassmannian $\gras{d,n}$
is the set of all $d$-dimensional oriented subspaces of $\RR^{n}$.
It can also be defined as the following quotients 
\[
\gras{d,n}=\so n\left(\RR\right)/\left\{ \left[\begin{smallmatrix}\so d\left(\RR\right) & 0_{d,n-d}\\
0_{n-d,d} & \so{n-d}\left(\RR\right)
\end{smallmatrix}\right]\right\} =\sl n\left(\RR\right)/\left[\begin{smallmatrix}\sl d\left(\RR\right) & \RR^{d,n-d}\\
0_{n-d,d} & \sl{n-d}\left(\RR\right)
\end{smallmatrix}\right],
\]
where $P_{d}$ is the group of upper triangular matrices of determinant
$1$ with positive diagonal entries. A coset of $\so n$ (or $\sl n$)
represents an oriented $d$-dimensional subspace $V$ if the first
$d$ columns of the matrices in this coset span $V$ with the corresponding
orientation. 

Recall that the shape of (an oriented) lattice is its equivalent class
modulo homothety and rotation. The space of shapes of (oriented) $d$-lattices
is 
\[
\shapespace d=\so d\left(\RR\right)\backslash\sl d\left(\RR\right)/\sl d\left(\ZZ\right)\diffeo P_{d}/\sl d\left(\ZZ\right).
\]
Both $\gras{d,n}$ and $\shapespace d$ are equipped with natural
measures that decent from the Haar measure on ambient group, and are
unique up to a positive scalar \textendash{} for $\gras{d,n}$ it
is the measure that is invariant under $\so n\left(\RR\right)$, and
for $\shapespace d$ it is the measure that is obtained from the Haar
measure on $\sl n(\RR)$. The specific normalizations we choose for
these measures are given in Section \ref{subsec: measures on spaces}. 

We now proceed to define a space of $d$-lattices in $\RR^{n}$ that
encodes both their shapes, and their directions (i.e. their projections
to the Grassmannian). 

\subsection{Spaces of homothety classes of $d$-lattices.}

A \emph{unimodular} lattice is an oriented lattice with positive orientation
and covolume one. Recall our notation for the space of rank $n$ unimodular
lattices 
\[
\latspace n=\sl n(\RR)/\sl n(\ZZ),
\]
where a matrix in $\sl n\left(\RR\right)$ lies in the coset that
represents a full unimodular lattice in $\RR^{n}$ if its columns
span this lattice. As the space of shapes $\shapespace n$ is obtained
from $\latspace n$ by modding by $\so n$, the space $\latspace n$
is more refined, containing not only the information about the shape
of a lattice, but also about its position in $\RR^{n}$. To define
the analogous space for $d$-lattices in $\RR^{n}$, notice that since
$\latspace n$ consists of a unique representative from any equivalence
class of $n$-lattices in $\RR^{n}$ modulo homothety, one can identify
$\latspace n$ with the space of such equivalence classes. The space
of homothety classes of oriented $d$-lattices inside $\RR^{n}$ is
\[
\latspace{d,n}:=\sl n\left(\RR\right)/\left(\left[\begin{smallmatrix}\sl d\left(\ZZ\right) & \RR^{d,n-d}\\
0_{n-d\times d} & \sl{n-d}\left(\RR\right)
\end{smallmatrix}\right]\times\left\{ \left[\begin{smallmatrix}\a^{\frac{1}{d}}\idmat d & 0_{d\times n-d}\\
0_{n-d\times d} & \a^{-\frac{1}{n-d}}\idmat{n-d}
\end{smallmatrix}\right]:\a>0\right\} \right),
\]
where a matrix in $\sl n\left(\RR\right)$ lies in the coset that
represents an equivalence class of a $d$-lattice in $\RR^{n}$ if
and only if its first $d$ columns span a positive scalar multiplication
of this lattice, with the corresponding orientation. The need to mod
out by the block-scalar group follows from the fact that the first
$d$ columns of a matrix in $\sl n$ span a lattice that  is hardly
ever unimodular, so one needs to mod out by the covolume (hence an
element in this quotient space is an equivalence class of $d$-lattices
up to homothety). However, modding out the block-scalar group comes
with a price, which is that the space $\latspace{d,n}$ does not carry
an $\sl n(\RR)$-invariant measure. To fix this flaw, we claim (Remark
\ref{rem: true quotients L_d,n and P_d,n}) that 
\[
\latspace{d,n}\diffeo\so n\left(\RR\right)\left[\begin{smallmatrix}P_{d}(\RR) & 0_{d\times n-d}\\
0_{n-d\times d} & \idmat{n-d}
\end{smallmatrix}\right]/\left[\begin{smallmatrix}\sl d\left(\ZZ\right) & 0_{d\times n-d}\\
0_{n-d\times d} & \so{n-d}\left(\RR\right)
\end{smallmatrix}\right],
\]
and observe that, while the quotiented manifold  is not a group,
it is diffeomorphic to the group $\so n\left(\RR\right)\times\left[\begin{smallmatrix}P_{d}(\RR) & 0_{d\times n-d}\\
0_{n-d\times d} & \idmat{n-d}
\end{smallmatrix}\right]$. Therefore, $\latspace{d,n}$ carries a Radon measure that is invariant
under this group, and unique up to a positive scalar. This measure
is the natural one in the sense that, as we shall see below, it projects
to the uniform measures on $\latspace d$ and $\shapespace d$. With
respect to it, $\latspace{d,n}$ has finite volume (which wouldn't
have been the case if we hadn't modded out by the covolume, just like
in the case of $\latspace n$). 

\subsection{Factor lattices and the space of pairs. }

Recall that a lattice $\lat$ is primitive if it is of the form $\lat=\ZZ^{n}\cap V_{\lat}$;
this is equivalent to the fact that any basis of $\lat$ can be completed
to a basis of $\ZZ^{n}$.  Theorem \ref{thm: Intro} consists of
a joint equidistribution result for primitive lattices $\lat$ and
their orthogonal complements $\perpen{\lat}=\ZZ^{n}\cap\perpen{V_{\lat}}$.
It is a consequence of the stronger Theorem \ref{thm: A counting}
below, in which $\perpen{\lat}$ is replaced by another $\left(n-d\right)$-lattice
in $\perpen{V_{\lat}}$:
\begin{defn}
The \emph{factor lattice} $\factor{\lat}$ of a primitive $d$-lattice
$\lat$ is the projection of $\ZZ^{n}$ to $\perpen{V_{\lat}}$. 
\end{defn}

The factor lattice $\factor{\lat}$ is isometric to the quotient $\ZZ^{n}/\lat$
(Proposition \ref{prop: factor is quotient}), so one should think
of $\factor{\lat}$ as a realization of $\ZZ^{n}/\lat$ inside $\RR^{n}$.
Notice that, like $\perpen{\lat}$, $\factor{\lat}$ is a full lattice
inside $\perpen{V_{\lat}}$, and in particular is of rank $n-d$.
The relation between $\perpen{\lat}$ and $\factor{\lat}$ is that
they are \emph{dual} to one another (for the definition of dual lattices,
see \cite[I.5]{Cassels71}, or Appendix \ref{sec: Dual Lattices}
in the present paper. For the duality of $\perpen{\lat}$ and $\factor{\lat}$,
see Claim \ref{prop: dual+factor =00003D orthogonal}). It holds that
\[
\covol{\factor{\lat}}=\covol{\lat}^{-1}=\covol{\perpen{\lat}}^{-1}
\]
(\cite{Schmidt_67,Schmidt_68}; see also Proposition \ref{prop: factor covolume}
and Corollary \ref{cor: orthogonal covolume} in the Appendix). Since
we view $d$-lattices as carrying an orientation, we need to define
an orientation on $\factor{\lat}$ and $\perpen{\lat}$, which is
done as follows. 
\begin{defn}
\label{def: orientation in orthogonal}Let $L$ be a full lattice
in $\perpen{V_{\lat}}$. A basis $\basec$ for $L$ is positively
oriented if $\det(\base|\basec)=1$ for a positively oriented basis
$\base$ of $\lat$.
\end{defn}

The last space we introduce is the space of pairs of oriented lattices
$\brac{\lat,L}$ such that (i) $\lat$ is a $d$-lattice, (ii) $L$
is a full lattice in $\perpen{V_{\lat}}$ (hence is of rank $n-d$
and its orientation is given in Definition \ref{def: orientation in orthogonal}),
and (iii) $\covol{\lat}\covol{\qlat}=1$. In fact, it is the space
of homothety classes of such pairs, where $\brac{\lat,L}$ is equivalent
to $\brac{\a^{\frac{1}{d}}\lat,\a^{-\frac{1}{n-d}}L}$ for every $\a>0$.
This space is given as the quotient 
\[
\text{\ensuremath{\pairspace{d,n}}}:=\sl n\left(\RR\right)/\left(\left[\begin{smallmatrix}\sl d\left(\ZZ\right) & \RR^{d,n-d}\\
0_{n-d\times d} & \sl{n-d}\left(\ZZ\right)
\end{smallmatrix}\right]\times\left\{ \left[\begin{smallmatrix}\a^{\frac{1}{d}}\idmat d & 0_{d\times n-d}\\
0_{n-d\times d} & \a^{-\frac{1}{n-d}}\idmat{n-d}
\end{smallmatrix}\right]:\a>0\right\} \right).
\]
Note the similarity to the definition of $\latspace{d,n}$ above;
here also, modding out the block-scalar group results in having to
$\sl n(\RR)$-invariant measure, whih can be fixed by observing (Remark
\ref{rem: true quotients L_d,n and P_d,n}) that 
\[
\pairspace{d,n}\diffeo\so n\left(\RR\right)\left[\begin{smallmatrix}P_{d}(\RR) & 0_{d\times n-d}\\
0_{n-d\times d} & P_{n-d}
\end{smallmatrix}\right]/\left[\begin{smallmatrix}\sl d\left(\ZZ\right) & 0_{d\times n-d}\\
0_{n-d\times d} & \sl{n-d}\left(\ZZ\right)
\end{smallmatrix}\right],
\]
and therefore $\pairspace{d,n}$ carries a Radon measure that is invariant
under the group $\so n\left(\RR\right)\times\left[\begin{smallmatrix}P_{d}(\RR) & 0_{d\times n-d}\\
0_{n-d\times d} & P_{n-d}
\end{smallmatrix}\right]$, and is unique up to a positive scalar. 

More details on dual lattices and factor lattices can be found in
the appendix.

\subsection{\label{subsec: measures on spaces}Relation between the spaces of
lattices and their measures}

While the spaces $\unilatspace d$, $\shapespace d$ and $\gras{d,n}$
are well known, the spaces $\latspace{d,n}$ and $\text{\ensuremath{\pairspace{d,n}}}$,
as far as we are aware, make their first appearance here. It therefore
seems appropriate to explain how $\latspace{d,n}$ and $\text{\ensuremath{\pairspace{d,n}}}$
add to the more ``familiar'' spaces $\unilatspace d$, $\shapespace d$
and $\gras{d,n}$. It is not hard to see that $\text{\ensuremath{\pairspace{d,n}}}$
projects canonically to $\latspace{d,n}$, which in turn projects
canonically to $\shapespace d$ and $\gras{d,n}$. In fact, one could
also map $\latspace{d,n}\to\unilatspace d$ and $\pairspace{d,n}\to\unilatspace{n-d}$,
but these projections are not cannonical, and depend on a choice of
coordinates on the $d$-dimensional (resp.\ $n-d$ dimensional) vector
spaces in $\RR^{n}$. Such a choice of coordinates will be introduced
in Section \ref{subsec: Introducing RI coordinates}, defining the
non-canonical identifications 
\begin{equation}
\begin{aligned}\latspace{d,n}\;\leftrightarrow\; & \;\unilatspace d\times\gras{d,n},\\
\pairspace{d,n}\;\leftrightarrow\; & \;\latspace{d,n}\times\unilatspace{n-d}.
\end{aligned}
\label{eq: identifications}
\end{equation}
The full picture is depicted in the following commutative diagram
of projections:

\begin{equation}
\xymatrix{\pairspace{d,n}\ar@{->>}[r]\ar@{->>}[d]\ar@{->>}@(dl,ul)[dd]\ar@{->>}[ddr]\ar@{->>}@(ur,ul)[rr] & \latspace{d,n}\ar@{->>}[r]\ar@{->>}[d]\ar@{->>}@(dr,ur)[dd] & \gras{d,n}\\
\unilatspace{n-d}\ar@{->>}[d] & \unilatspace d\ar@{->>}[d]\\
\shapespace{n-d} & \shapespace d
}
,\label{eq: projections}
\end{equation}

\begin{notation}
\label{nota: elements in spaces}Given an oriented $d$-lattice $\lat<\RR^{n}$
we denote its homothety class by $\unilat{\lat}\in\latspace{d,n}$,
and its shape by $\shape{\lat}\in\shapespace d$. Given a pair $\brac{\lat,\qlat}$
of lattices in orthogonal subspaces where $\lat$ is primitive of
rank $d$, we denote its homothety class by $\unilat{\brac{\lat,\qlat}}\in\pairspace{d,n}$.
The image of $\lat$ in $\unilatspace d$ (resp.\ of $\qlat$ in
$\unilatspace{n-d}$) is denoted $\unisimlat{\lat}$ (resp.\ $\unisimlat{\qlat}$).

With these notations, Diagram (\ref{eq: projections}) becomes:
\[
\xymatrix{\unilat{\brac{\lat,\qlat}}\ar@{|->}@(ur,ul)[rr]\ar@{|->}[r]\ar@{|->}[d]\ar@{|->}@(dl,ul)[dd]\ar@{|->}@(dr,ul)[ddr] & \unilat{\lat}\ar@{|->}[r]\ar@{|->}[d]\ar@{|->}@(dr,ur)[dd] & V_{\lat}\\
\unisimlat{\qlat}\ar@{|->}[d] & \unisimlat{\lat}\ar@{|->}[d]\\
\shape{\qlat} & \shape{\lat}
}
.
\]
\end{notation}

\begin{rem}
For completeness, we provide an explicit description of the above
maps. The projection $\pairspace{d,n}\to\latspace{d,n}$ is the one
of modding by $\left[\begin{smallmatrix}\idmat d & 0_{d\times n-d}\\
0_{n-d\times d} & \sl{n-d}\left(\RR\right)
\end{smallmatrix}\right]$ \textbackslash{} from the right, and the projection $\latspace{d,n}\to\gras{d,n}$
is modding by $\left[\begin{smallmatrix}\sl d\left(\RR\right) & 0_{d\times n-d}\\
0_{n-d\times d} & \idmat{n-d}
\end{smallmatrix}\right]$ from the right. The projections $\unilatspace i\to\shapespace i$
for $i=d,n-d$ are given by modding from the left by $\so i\left(\RR\right)$.
The projections $\pairspace{d,n}\to\latspace{d,n}$ and $\pairspace{d,n}\to\shapespace{n-d}$
, as mentioned, depend on a choice of coordinates and will be explicated
in Remark \ref{rem: k' detemines projection to U_d and U_n-d}. The
projection $\pairspace{d,n}\to\shapespace d$ is modding from the
left by $\so n\left(\RR\right)\left[\begin{smallmatrix}\idmat d & 0_{d\times n-d}\\
0_{n-d\times d} & P_{n-d}
\end{smallmatrix}\right]$, and similarly for $\pairspace{d,n}\to\shapespace{n-d}$.
\end{rem}

The projections depicted in Diagram (\ref{eq: projections}) are critical
to understanding why the measures we introduced on $\latspace{d,n}$
and $\text{\ensuremath{\pairspace{d,n}}}$ are indeed the natural
uniform measures on these spaces; as we shall see below (Lemma \ref{lem: lift to G''}),
with a consistent choice of normalizations, these measures project
to the uniform measures on $\unilatspace d$, $\shapespace d$ and
$\gras{d,n}$.

\paragraph*{Normalizations of the measures. }

As explained above, all the spaces considered are endowed with invariant
Radon measures that are unique up to a positive scalar, and have finite
volume with respect to these measures. Hence, by fixing the volume
of the entire space, a unique measure is determined. This is what
we now turn to do, where all the unique measures obtained are denoted
by $\vol$ (the space in question should be understood from the context).

In accordance with the fact that $\so d\left(\RR\right)/\so{d-1}\left(\RR\right)$
is diffeomorphic to the sphere $\sphere{d-1}$, we choose a Haar measure
on the special orthogonal group such that $\haar\brac{\so d\left(\RR\right)}=\leb{\sphere{d-1}}\cdot\haar\brac{\so{d-1}\left(\RR\right)}$,
namely:
\[
\haar\brac{\so d\left(\RR\right)}=\prod_{i=1}^{d-1}\leb{\sphere i}.
\]
 We refer to \cite{Volumes} for the fact that, with our standard
choice (Section \ref{subsec: RI Haar measure}) of Haar measure on
$\sl d\left(\RR\right)$, $\vol\brac{\sl d\left(\RR\right)/\sl d\left(\ZZ\right)}=\prod_{i=2}^{d}\zeta\left(i\right)\cdot\prod_{i=1}^{d-1}\leb{\sphere i}/\haar\brac{\so d\left(\RR\right)}$,
where $\z$ is the Riemann Zeta function. Thus, 
\[
\vol\brac{\unilatspace d}=\prod_{i=2}^{d}\zeta\left(i\right).
\]
Similarly, in accordance with the fact that $\so n\left(\RR\right)/\brac{\so d\left(\RR\right)\times\so{n-d}\left(\RR\right)}$
is diffeomorphic to the Grassmannian $\gras{d,n}$, we choose to normalize
the volume of the Grassmannian as $\haar\brac{\so n\left(\RR\right)}/\brac{\haar\brac{\so d\left(\RR\right)}\haar\brac{\so{n-d}\left(\RR\right)}}$,
namely
\[
\vol\brac{\gras{d,n}}=\frac{\prod_{i=1}^{n-1}\leb{\sphere i}}{\prod_{i=1}^{d-1}\leb{\sphere i}\prod_{i=1}^{n-d-1}\leb{\sphere i}}=\frac{\prod_{i=n-d}^{n-1}\leb{\sphere i}}{\prod_{i=1}^{d-1}\leb{\sphere i}}.
\]
Finally, recall from Theorem \ref{thm: Intro} that
\[
\halfK d=\frac{\haar\brac{\so d\left(\RR\right)}}{\left[\so d\left(\RR\right):Z\left(\so d\left(\RR\right)\right)\right]},
\]
where $Z$ denotes the center. Note that the denominator is $1$ if
$d$ is odd and $2$ if it is even. 

As we shall now prove (Lemma \ref{lem: lift to G''}), fixing the
volumes of $\unilatspace d$ and $\gras{d,n}$ determines the volumes
of $\latspace{d,n}$ and $\text{\ensuremath{\pairspace{d,n}}}$ in
correspondence with the identifications in (\ref{eq: identifications}).
In fact, 
\begin{equation}
\begin{aligned}\vol\brac{\unilatspace d}\;=\; & \;\vol\brac{\shapespace d}\halfK d,\\
\vol\brac{\latspace{d,n}}=\; & \;\vol\brac{\unilatspace d}\vol\brac{\gras{d,n}},\\
\vol\brac{\pairspace{d,n}}=\; & \;\vol\brac{\latspace{d,n}}\vol\brac{\unilatspace{n-d}}.
\end{aligned}
\label{eq: product measures}
\end{equation}

\paragraph*{Interactions between the measures. }

We claim that the measures we introduced on $\latspace{d,n}$ and
$\text{\ensuremath{\pairspace{d,n}}}$ are the products of uniform
measures in a manner that corresponds to the identifications in (\ref{eq: identifications}).
For example, if $\latset\subseteq\latspace{d,n}$ projects to the
sets $\groupset\subseteq\unilatspace d$ and $\sphereset\subseteq\gras{d,n}$,
then $\vol\brac{\latset}=\vol\brac{\groupset}\vol\brac{\sphereset}$.
This (and in particular, (\ref{eq: product measures})) is a consequence
of the following lemma.

\begin{lem}
\label{lem: lift to G''}
\begin{enumerate}
\item Assume $\groupset\subseteq\latspace n$ is the lift of $\symset\subseteq\shapespace n$.
If $\symset$ is a BCS then $\groupset$ is, and $\text{\ensuremath{\vol\brac{\groupset}}}=\vol\brac{\symset}\halfK n$.
\item Assume $\latset\subseteq\latspace{d,n}$ is the product to $\groupset\subseteq\unilatspace d$
and $\sphereset\subseteq\gras{d,n}$ (e.g. if $\latset$ is the inverse
image of $\groupset$). If $\groupset$ and $\sphereset$ are BCS's,
then so is $\latset$, and $\vol\brac{\latset}=\vol\brac{\groupset}\vol\brac{\sphereset}$.
\item Assume $\pairset\subseteq\pairspace{d,n}$ is the product of $\groupset\subseteq\unilatspace d$,
$\groupfacset\subseteq\unilatspace{n-d}$ and $\sphereset\subseteq\gras{d,n}$.
If $\groupset$, $\groupfacset$ and $\sphereset$ are BCS's, then
so is $\pairset$, and $\vol\brac{\pairset}=\vol\brac{\groupset}\vol\brac{\groupfacset}\vol\brac{\sphereset}$.
\item Assume $\pairset\subseteq\pairspace{d,n}$ is the product of $\latset\subseteq\latspace{d,n}$
and $\groupfacset\subseteq\unilatspace{n-d}$. If $\latset$ and $\groupfacset$
are BCS's, then so is $\latset$, and $\vol\brac{\pairset}=\vol\brac{\latset}\vol\brac{\groupfacset}$
\end{enumerate}
\end{lem}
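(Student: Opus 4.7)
The plan is to prove each of the four parts in two steps: first the \emph{BCS} assertion, then the volume identity. For the BCS part I would establish two general stability principles directly from Definition~\ref{def: BCS}: (i) the preimage of a BCS under a smooth submersion whose fibres have bounded geometry (all projections in Diagram~(\ref{eq: projections}) are of this type) is a BCS, and (ii) the Cartesian product of two BCS's is a BCS. Principle (i) handles part (1) and the lifts implicit in the ``product'' descriptions of parts (2)--(4); principle (ii) handles the products themselves. With these in hand the BCS statements in all four parts become formal.

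For the volume identity in part (1), the map $\latspace n \to \shapespace n$ is the quotient by the left action of $\so n(\RR)$, whose stabilizer at a generic oriented unimodular lattice is exactly $Z(\so n(\RR))$ (non-central rotational symmetries cut out a positive-codimension subset). By Weil's quotient integration formula,
\[
\vol(\groupset) \;=\; \frac{\haar(\so n(\RR))}{[\so n(\RR):Z(\so n(\RR))]}\,\vol(\symset) \;=\; \halfK n\,\vol(\symset).
\]
For parts (2)--(4) I would pass to the alternative group descriptions of $\latspace{d,n}$ and $\pairspace{d,n}$ given in Section~\ref{sec: Background on Lattices}, which realize both spaces as quotients of a Lie group $G''$ that factors as a direct product of $\so n(\RR)$ with $\left[\begin{smallmatrix}P_d & 0\\0 & I_{n-d}\end{smallmatrix}\right]$ or $\left[\begin{smallmatrix}P_d & 0\\0 & P_{n-d}\end{smallmatrix}\right]$, modulo a discrete subgroup. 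The Haar measure on $G''$ is literally the product of Haar measures on its Lie-group factors, and each projection in Diagram~(\ref{eq: projections}) corresponds to killing one of these direct factors. Iterating Weil's formula then expresses $\vol(\latset)$ and $\vol(\pairset)$ as products of the volumes of their projections, giving the three product formulas of parts (2)--(4) directly.

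The main subtlety --- and the one place where an honest calculation is unavoidable --- is checking that the normalizations fixed in Section~\ref{subsec: measures on spaces} are mutually consistent, so that no stray constants appear in the product formulas. This is precisely the content of the three identities in (\ref{eq: product measures}), which I would read simultaneously as the output of the Weil-formula computation and as the definition of the normalizations on $\latspace{d,n}$ and $\pairspace{d,n}$. A secondary issue is that the identifications $\latspace{d,n}\leftrightarrow\unilatspace d\times\gras{d,n}$ and $\pairspace{d,n}\leftrightarrow\latspace{d,n}\times\unilatspace{n-d}$ appearing in parts (2)--(4) are non-canonical, depending on the coordinates introduced in Section~\ref{sec: RI coordinates}; but since the invariant measures on each factor are intrinsic, the product measure on $G''$ descends to the unique invariant measure on $\latspace{d,n}$ or $\pairspace{d,n}$ independently of the chosen splitting, so the volume identities do not depend on this choice.
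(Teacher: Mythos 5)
The paper's own proof is a one-line citation: part~1 comes from \cite[Lemma 3.8]{HK_gcd}, parts~2 and~4 from \cite[Propositions 6.15, 6.16]{HK_WellRoundedness} (the spread-model machinery recalled as Proposition~\ref{prop: spread models - properties}), and part~3 follows from parts~2 and~4. Your proposal instead builds a self-contained argument from two BCS stability principles (preimages under submersions, Cartesian products) and from Weil's quotient integration formula, bypassing the spread-model formalism entirely. This is a genuinely different and legitimate route. What the paper's approach buys is brevity and reuse of infrastructure already developed for the counting argument elsewhere in the paper; what your approach buys is independence from that companion machinery and a cleaner conceptual picture of why the volume identities hold (they are literally Fubini/Weil once the measures are exhibited as products on the group $\so n(\RR)\times\left[\begin{smallmatrix}P_d & 0\\ 0 & I\end{smallmatrix}\right]$ or its variant).

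Two points to tighten if you were to write this out in full. First, your BCS stability principle~(i) is stated with a ``bounded geometry of fibres'' hypothesis, but that hypothesis is not needed: BCS is a purely local condition, and for a submersion $\pi$, the local preimage of a $C^1$ submanifold of strictly smaller dimension is again such a submanifold; the one genuine wrinkle is that $\shapespace n$ is an orbifold rather than a manifold, so the submersion argument has to be run in an orbifold chart away from (and then across) the positive-codimension singular locus. Second, in part~(1) the claim that the generic stabilizer for the left $\so n(\RR)$-action on $\latspace n$ is exactly $Z(\so n(\RR))$ deserves a sentence of justification (the singular set of lattices with nontrivial rotational symmetry beyond $\pm I$ is a proper subvariety, hence null); and you should be aware that the notation $[\so d(\RR):Z(\so d(\RR))]$ in the definition of $\halfK d$, which you carry over verbatim, is really being used to mean the \emph{order} of the center (the paper says explicitly it equals $1$ or $2$), and that is what appears in the Weil formula as the generic stabilizer order.
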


\begin{proof}
Part 1 is proved in \cite[Lemma 3.8]{HK_gcd}, Part 3 is a consequence
of Parts 2 and 4, and Parts 2 and 4 are a consequence of \cite[Propositions 6.15 and 6.16]{HK_WellRoundedness}.
\end{proof}

\section{\label{sec: Counting Lattices}Counting lattices: Main theorem}

Theorem (\ref{thm: Intro}) stated in the Introduction is in fact
a consequence of the more general Theorem \ref{thm: A counting},
which we now introduce. It our main result, which we prove throughout
the rest of the paper. This theorem concerns the equidistribution
of $\unilat{\brac{\lat,\factor{\lat}}}\in\pairspace{d,n}$ for primitive
$d$-lattices $\lat$, implying the equidistribution of the projections
 of $\unilat{\brac{\lat,\factor{\lat}}}$ to all the spaces below
$\pairspace{d,n}$, e.g.\ $\brac{\unilat{\lat},\unisimlat{\factor{\lat}}}\in\latspace{d,n}\times\unilatspace{n-d}$,
$\brac{\shape{\lat},\shape{\factor{\lat}}}\in\shapespace d\times\shapespace{n-d}$
, etc. Then Theorem (\ref{thm: Intro}) follows from the fact that
the equidistribution still holds when replacing $\factor{\lat}$ by
its dual, $\perpen{\lat}$, which is the content of Theorem (\ref{thm: counting orthogonal lattices}).

Let us provide more details. First of all, for the counting to hold,
we require that $\unilat{\brac{\lat,\factor{\lat}}}$ and its projections
to all the spaces below it fall in sets that satisfy the following
regularity condition: 
\begin{defn}
\label{def: BCS}A subset $B$ of an orbifold $\manifold$ will be
called \emph{boundary controllable set}, or a BCS, if for every $x\in\mathcal{M}$
there is an open neighborhood $U_{x}$ of $x$ such that $U_{x}\cap\del B$
is contained in a finite union of embedded $C^{1}$ submanifolds of
$\mathcal{M}$, whose dimension is strictly smaller than $\dim\manifold$.
In particular, $B$ is a BCS if its (topological) boundary consists
of finitely many  subsets of embedded $C^{1}$ submanifolds. 
\end{defn}

A few more notions before we can state our main theorem. We refer
to the left component in $\shapespace d\times\shapespace{n-d}$, $\unilatspace d\times\unilatspace{n-d}$
and $\pairspace{d,n}=\cbrac{\unilat{\brac{\lat,\qlat}}}$ as the $d$
component, and to the right component in these spaces as the $n-d$
component. We say that the $d$ (resp.\ $n-d$) component of a set
in one of these spaces is \emph{bounded} if the image of this set
under the projection to $\shapespace d$ (resp.\ $\shapespace{n-d}$)
is bounded. Notice that this is well defined since the projections
to the shape spaces are canonical; they are also proper, so a set
in $\pairspace{d,n}$ (or $\latspace{d,n-d}\times\unilatspace{n-d}$,
or $\unilatspace d\times\unilatspace{n-d}$...) is bounded if and
only if both its $d$ and $n-d$ components are bounded.
\begin{thm}
\label{thm: A counting}Assume that $\sphereset\subseteq\gras{d,n}$,
$\symset\times\factorset\subseteq\shapespace d\times\shapespace{n-d}$,
$\groupset\times\groupfacset\subseteq\unilatspace d\times\unilatspace{n-d}$,
$\latset\subseteq\latspace{d,n}$ and $\pairset\subseteq\pairspace{d,n}$
are BCS's. Then, for every $\e>0$: 
\end{thm}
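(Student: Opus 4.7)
The plan is to follow the roadmap sketched in the Organization paragraph, converting the problem of counting primitive $d$-lattices into a classical lattice-point problem in $\sl n(\RR)$. First I would parameterize $\sl n(\RR)$ via the refined Iwasawa coordinates of Section \ref{sec: RI coordinates}, chosen so that a matrix $g\in\sl n(\RR)$ decomposes into components that correspond, respectively, to the Grassmannian direction $V_\lat\in\gras{d,n}$, the homothety class of $\lat$ (and hence $\shape\lat$), the homothety class of $\factor\lat$ (and hence $\shape{\factor\lat}$), and the scaling $\covol\lat$. In these coordinates the fundamental domains of Section \ref{sec: Fundamental domains} for $\latspace{d,n}$ and $\pairspace{d,n}$ acquire an explicit product form, so that an arbitrary BCS datum $\pairset\subseteq\pairspace{d,n}$ lifts to a set in $\sl n(\RR)$ that respects the coordinate split.

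Second, using Section \ref{sec: Integral matrices representing primitive vectors}, I would biject oriented primitive $d$-lattices (paired with their factor lattices) with cosets $g_\lat\cdot P_\ZZ\subseteq\sl n(\ZZ)$, where $P_\ZZ=\left[\begin{smallmatrix}\sl d(\ZZ) & \ZZ^{d,n-d}\\ 0 & \sl{n-d}(\ZZ)\end{smallmatrix}\right]$; the point is that a $\ZZ$-basis of $\lat$ extends to a basis of $\ZZ^n$ whose last $n-d$ columns project onto $\factor\lat$, and the indeterminacy of this extension is precisely $P_\ZZ$. Translating the constraints $\covol\lat\le X$ and $\unilat{(\lat,\factor\lat)}\in\pairset$ into conditions on $g\in\sl n(\RR)$ yields a family of (non-compact) target sets $\Omega_X\subseteq\sl n(\RR)$, and the count we want equals $\#(\sl n(\ZZ)\cdot e\cap\Omega_X)$ up to the $P_\ZZ$-multiplicity absorbed into the measure normalization.

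Third, because the fundamental domains extend into the cusp of $\sl n(\RR)/\sl n(\ZZ)$, the sets $\Omega_X$ are unbounded and must be replaced by compact approximants $\Omega_X^{(T)}$ depending on a truncation parameter $T$, following Section \ref{sec: Counting lattices up the cusp}. The cusp tail contributes an error term controlled by a power of $T$, and different powers appear depending on whether the shape constraints $\symset$ or $\factorset$ are already bounded, which is exactly what produces the four cases of the exponent $\b$ that appear in Theorem \ref{thm: Intro}. Once compact, the count of $\sl n(\ZZ)$-points in $\Omega_X^{(T)}$ is handled by Proposition \ref{thm: Counting with S(T) and W(T)} via the Gorodnik--Nevo method of Section \ref{sec: Technical Proof}, giving a main term $\haar(\Omega_X^{(T)})/\vol(\sl n(\RR)/\sl n(\ZZ))$ with a quantitative error depending on the mean-ergodic spectral gap and on norms of the characteristic function of $\Omega_X^{(T)}$.

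Finally, I would compute the Haar volume of $\Omega_X^{(T)}$ directly in the refined coordinates. Under the measure normalizations of Section \ref{subsec: measures on spaces}, the fibres of the projections in Diagram~(\ref{eq: projections}) yield a factorization
\[
\haar(\Omega_X)\;=\;\vol(\pairset)\cdot \halfK d\cdot \halfK{n-d}\cdot \frac{X^n}{n},
\]
which, divided by $\vol(\sl n(\RR)/\sl n(\ZZ))=\prod_{i=2}^n\z(i)$, produces the main coefficient predicted by Theorem \ref{thm: Intro}. The coarser statements for $\latspace{d,n}$, $\unilatspace d\times\unilatspace{n-d}$ and $\shapespace d\times\shapespace{n-d}$ follow by Lemma \ref{lem: lift to G''}, whose product-measure identities guarantee that integrating out the unused coordinates gives the stated volumes. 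The main technical obstacle will be balancing the truncation $T$ against the spectral error so that the cusp tail and the compact error contribute matching powers of $X$; the resulting optimum is $T\asymp X^{\errexp_n n/\b}$, which produces the error exponent $n-\errexp_n n/\b+\e$ in all four regimes. The case where neither $\symset$ nor $\factorset$ is bounded is the most delicate, since the cusp is approached along two independent directions and the truncation must be tuned to the worst of the two, explaining the $\max\{d,n-d\}$ appearing in $\b$.
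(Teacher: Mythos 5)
Your roadmap matches the paper's: refined Iwasawa coordinates (Section \ref{sec: RI coordinates}), bijection between primitive $d$-lattices and integral matrices in a fundamental domain $\funddom$ for $\Gamma=\left[\begin{smallmatrix}\sl d(\ZZ)&\ZZ^{d,n-d}\\0&\sl{n-d}(\ZZ)\end{smallmatrix}\right]$ (Section \ref{sec: Integral matrices representing primitive vectors}), cusp truncation (Section \ref{sec: Counting lattices up the cusp}), Gorodnik--Nevo counting, and a final reduction of Parts 1--3 to Part 4 via Lemma \ref{lem: lift to G''}. Two details, however, are misstated and would give wrong constants if carried out literally.

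First, the displayed volume formula $\haar(\Omega_X)=\vol(\pairset)\halfK d\halfK{n-d}X^n/n$ is not consistent with the paper's normalizations: when $\pairset\subseteq\pairspace{d,n}$, the Haar measure of the lifted region is $\vol(\pairset)\cdot X^n/n$ with no $\halfK{}$ factors (that is the content of Part \ref{enu: MainThm_restricted pairs} of Theorem \ref{thm: A counting}). The $\halfK d\halfK{n-d}$ factors enter only when one passes from sets in $\shapespace d\times\shapespace{n-d}$ to their lifts in $\unilatspace d\times\unilatspace{n-d}$, via $\vol(\groupset)=\vol(\symset)\halfK d$ (Lemma \ref{lem: lift to G''}(1)). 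Keeping the factors in both places would double-count.

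Second, the optimization you describe is misattributed to $T$. In the paper $T=\log X$ is forced, since the covolume constraint $\covol\lat\le X$ becomes $a'\in A'_{[0,T]}$ with $e^t=\covol\lat$; $T$ is not a free parameter. What is optimized is the truncation slope $\delta$ appearing in $\Svec(T)=\underline\sigma T$, $\Wvec(T)=\underline\omega T$ (with $\sigma,\omega$ proportional to $\delta$), balancing the cusp estimate from Proposition \ref{cor: very few SL(n,Z) points up the cusp}, which loses $e^{-\sigma_{\min}T}$, against the Lipschitz-constant penalty $e^{(\sumS+\sumW)/\lm_n}$ in Proposition \ref{thm: Counting with S(T) and W(T)}. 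Your relation $T\asymp X^{\errexp_n n/\b}$ would make $T$ grow polynomially in $X$, inconsistent with $T=\log X$. The correct balance equation is $1-\errexp_n+\delta = 1-\frac{\delta\lm_n}{d-1}$ (in the case only $\factorset$ is bounded), which determines $\delta$ and hence $\beta$; the $\max\{d,n-d\}$ case arises because the worse of the two slopes governs when neither component is bounded, as you correctly anticipated.
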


\begin{enumerate}
\item \label{enu: MainThm_restricted shape}The number of primitive $d$-lattices
$\lat$ of covolume up to $X$ with $V_{\lat}\in\sphereset$ and $\brac{\shape{\lat},\shape{\factor{\lat}}}\in\symset\times\factorset$
is 
\[
\frac{\vol\brac{\symset}\vol\brac{\factorset}\vol\brac{\sphereset}}{n\cdot\prod_{i=2}^{n}\zeta\left(i\right)}\halfK d\halfK{n-d}\cdot X^{n}+\text{error term}.
\]
\item \label{enu: MainThm_restricted G''}The number of primitive $d$-lattices
$\lat$ of covolume up to $X$ with $V_{\lat}\in\sphereset$ and $\brac{\unisimlat{\lat},\unisimlat{\factor{\lat}}}\in\groupset\times\groupfacset$
is 
\[
\frac{\vol\brac{\groupset}\vol\brac{\groupfacset}\vol\brac{\sphereset}}{n\cdot\prod_{i=2}^{n}\zeta\left(i\right)}\cdot X^{n}+\text{error term}.
\]
\item \label{enu: MainThm_restricted components of pairs}The number of
primitive $d$-lattices $\lat$ of covolume up to $X$ with $\unilat{\lat}\in\latset$
and $\unisimlat{\factor{\lat}}\in\groupfacset$ is 
\[
\frac{\vol\brac{\latset}\vol\brac{\groupfacset}}{n\cdot\prod_{i=2}^{n}\zeta\left(i\right)}\cdot X^{n}+\text{error term}.
\]
In particular, the number of primitive $\lat$ with $\covol{\lat}\leq X$
whose homothety class  lies inside a BCS $\latset\subseteq\latspace{d,n}$
is $\frac{\vol\brac{\latset}}{\prod_{i=n-d+1}^{n}\zeta\left(i\right)}\cdot\frac{X^{n}}{n}$,
plus an error term. 
\item \label{enu: MainThm_restricted pairs}The number of primitive $d$-lattices
$\lat$ of covolume up to $X$ with $\unilat{\brac{\lat,\factor{\lat}}}\in\pairset$
is
\[
\frac{\vol\brac{\pairset}}{n\cdot\prod_{i=2}^{n}\zeta\left(i\right)}\cdot X^{n}+\text{error term}.
\]
\end{enumerate}
The error term is $O_{\e}\brac{X^{n-\frac{\errexp_{n}n}{2\left(n-d-1\right)\left(n^{2}-1\right)+n^{2}}+\e}}$
with $\errexp_{n}=\left\lceil \left(n-1\right)/2\right\rceil /4n^{2}$
when the $d$ component is bounded, $O_{\e}\brac{X^{n-\frac{\errexp_{n}n}{2\left(d-1\right)\left(n^{2}-1\right)+n^{2}}+\e}}$
when the $n-d$ component is bounded, $O_{\e}\brac{X^{n-\frac{\errexp_{n}n}{2\left(\max\left\{ d,n-d\right\} -1\right)\left(n^{2}-1\right)+n^{2}}+\e}}$
when neither of them is bounded and $O_{\e}\brac{X^{n-n\errexp_{n}+\e}}$
when both of them are bounded. 

\begin{rem}
\begin{enumerate}
\item Regarding the aforementioned work of Schmidt: our leading constants
in Theorem \ref{thm: A counting} coincide with Schmidt's constant
$c_{d,n}$ (\cite[Thm 2]{Schmidt_98}, \cite[Thm 1.2]{Schmidt_15})
when the measures on the appearing subspaces are normalized to probability
measures. In fact, as can be computed from the chosen normalizations
of the volumes of the spaces that appear in Theorem \ref{thm: A counting},
along with the fact that $\leb{\sphere i}=\left(i+1\right)\leb{\ball{i+1}}$,
the relation between Schmidt's constant and our chosen normalizations
is given by
\[
c_{d,n}=\frac{\vol\brac{\gras{d,n}}}{2}\cdot\frac{\vol\brac{\unilatspace d}\vol\brac{\unilatspace{n-d}}}{\vol\brac{\unilatspace n}}.
\]
The $\frac{1}{2}$ factor is due to the fact that the lattices we
count carry an orientation, so every non-oriented lattice is counted
twice. We also note that a set that is BCS is Jordan measurable, and
indeed Schmidt (in \cite{Schmidt_98}) provides an example for how
the asymptotic formula for number of $d$-lattices with shapes in
$\symset$ fails when $\symset$ is not Jordan measurable. 
\item The case of $d=n-1$ was also obtained in \cite{Marklof_10}, using
a dynamical approach, as well as in \cite{HK_gcd}. It was also considered
in \cite{AES_16A,AES_16B,EMSS_16,ERW17,B19} in a more delicate setting.
\item Primitive $d$-lattices are in one-to-one correspondence with rational
subspaces in $\RR^{n}$. These spaces are the rational points on the
Grassmanian variety: the projective variety consisting of all the
$d$\textendash dimensional spaces in $\RR^{n}$. Therefore, the aforementioned
result of Schmidt can be read as the counting of rational points up
to a bounded height in the Grassmannian variety (the height being
the covolume of the unique primitive lattice in the space). As such,
it provides yet another example where the Manin conjecture \cite{FMT89,P95}
on counting rational points in varieties holds. The more refined counting
we suggest in Theorem \ref{thm: A counting} (Part \ref{enu: MainThm_restricted pairs})
plays a key role in the intensive study of rational points on the
Grassmannian conducted in \cite{BHW_grassmannians}. In this paper,
the authors confirm a modification to the Manin conjecture suggested
by Peyre \cite{P17}, and obtain an equidistribution result for the
integral lattices in the rational tangent bundle of the Grassmannian. 
\end{enumerate}
\end{rem}

Note that Theorem (\ref{thm: Intro}) is just Part \ref{enu: MainThm_restricted shape}
of Theorem \ref{thm: A counting}, only with $\perpen{\lat}$ instead
of $\factor{\lat}$. As we have already mentioned, these lattices
are dual to one another; we denote the dual of a lattice $\lat$ by
$\dual{\lat}$. Then Theorem (\ref{thm: Intro}) is obtained from
Theorem \ref{thm: A counting} and the following.

\begin{thm}
\label{thm: counting orthogonal lattices}A version of Theorem \ref{thm: A counting}
holds when replacing $\left(\lat,\factor{\lat}\right)$ by each of
$\left(\lat,\perpen{\lat}\right)$, $\left(\dual{\lat},\perpen{\lat}\right)$,
$\left(\dual{\lat},\factor{\lat}\right)$. 
\end{thm}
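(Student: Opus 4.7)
The plan is to deduce Theorem \ref{thm: counting orthogonal lattices} from Theorem \ref{thm: A counting} by treating duality $\qlat\mapsto\dual{\qlat}$ as a measure-preserving involution on each lattice space appearing in Diagram (\ref{eq: projections}). The starting observation is that $\perpen{\lat}=\dual{(\factor{\lat})}$, so conditions on $\perpen{\lat}$ translate, after applying the involution, to conditions on $\factor{\lat}$, and symmetrically conditions on $\dual{\lat}$ translate to conditions on $\lat$. Each of the three variants is thereby reduced to Theorem \ref{thm: A counting} applied to the image of the original BCS under the appropriate duality involution, and the two asymptotics will agree because duality preserves both the BCS property and the volume normalizations fixed in Section \ref{subsec: measures on spaces}.

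The core step is to describe duality explicitly on each relevant space. On $\unilatspace m=\sl m(\RR)/\sl m(\ZZ)$, duality coincides with the map induced by the Cartan involution $g\mapsto(g^{\transpose})^{-1}$ of $\sl m(\RR)$; this is well defined on the quotient because $\sl m(\ZZ)$ is stable under inverse-transpose, and, being a $C^{\infty}$ group automorphism of order two, it preserves the Haar measure of $\sl m(\RR)$ and hence the induced measure on $\unilatspace m$. It also commutes with the left action of $\so m(\RR)$ (since $k^{\transpose}=k^{-1}$ for orthogonal $k$), so it descends to a measure-preserving involution of $\shapespace m$. On $\latspace{d,n}$ and $\pairspace{d,n}$, since $V_{\dual{\lat}}=V_{\lat}$ and duality anticommutes with homothety (ensuring it descends to the homothety quotients), I would observe that under the identifications (\ref{eq: identifications}) duality acts as (duality on $\unilatspace d$)$\times$(identity on $\gras{d,n}$) and as (duality on $\unilatspace d$)$\times$(identity on $\gras{d,n}$)$\times$(duality on $\unilatspace{n-d}$), respectively. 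These are diffeomorphisms, hence send BCS's to BCS's, and by the preceding facts they preserve the product measures.

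Granted these invariances, the three variants follow formally. For the $(\lat,\perpen{\lat})$ version of Part \ref{enu: MainThm_restricted shape}, if $\iota$ denotes the duality involution on $\shapespace{n-d}$, then $\shape{\perpen{\lat}}\in\factorset$ is equivalent to $\shape{\factor{\lat}}\in\iota(\factorset)$, so the count equals the one given by Theorem \ref{thm: A counting} for the BCS $\symset\times\iota(\factorset)$; the leading constants match because $\vol\brac{\iota(\factorset)}=\vol\brac{\factorset}$. The $(\dual{\lat},\factor{\lat})$ variant applies the analogous involution on $\shapespace d$ to $\symset$, $(\dual{\lat},\perpen{\lat})$ applies both simultaneously, and Parts \ref{enu: MainThm_restricted G''}--\ref{enu: MainThm_restricted pairs} are handled identically using the corresponding involutions on $\unilatspace m$, $\latspace{d,n}$, and $\pairspace{d,n}$. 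The error terms are unchanged, since boundedness of a component is preserved by duality.

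The only nontrivial point to watch out for \textendash{} and the main obstacle in the plan \textendash{} is that the involutions above must be well defined on the \emph{oriented} lattice spaces. In particular, one must verify that applying duality to $\factor{\lat}$ in the sense of Definition \ref{def: orientation in orthogonal} genuinely recovers $\perpen{\lat}$ with the orientation used throughout Theorem \ref{thm: A counting}, and similarly for duality on oriented unimodular lattices. These orientation compatibilities are precisely the content of the duality properties developed in the Appendix; once they are granted, no further counting input is needed.
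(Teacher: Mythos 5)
Your proposal is correct and follows the same overall strategy as the paper: reduce each variant to Theorem~\ref{thm: A counting} via the observation that $\perpen{\lat}=\dual{(\factor{\lat})}$ (Proposition~\ref{prop: dual+factor =00003D orthogonal}) together with the fact that the duality maps $(\lat,L)\mapsto(\dual{\lat},L)$, $(\lat,L)\mapsto(\lat,\dual{L})$ and $(\lat,L)\mapsto(\dual{\lat},\dual{L})$ are measure-preserving diffeomorphisms of $\pairspace{d,n}$. Where you differ is in \emph{how} measure-preservation is established: the paper simply cites Propositions~\ref{prop: All that Dual... and P_d,n} and~\ref{rem: dual of unimodular}, whose Appendix proofs go through the explicit $\gl n(\RR)$-level map $[\base|\basec]\mapsto[\base(\base^{\transpose}\base)^{-1}|\basec]$ and its variants. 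You instead work through the non-canonical product identifications~(\ref{eq: identifications}) and observe that, under them, duality acts factor-wise as the Cartan involution $g\mapsto g^{-\transpose}$ on $\unilatspace d$ and $\unilatspace{n-d}$ (which preserves Haar measure and commutes with the left $\so{}$-action) and as the identity on $\gras{d,n}$ (since $V_{\dual{\lat}}=V_{\lat}$). That computation is correct \textemdash{} for $\base=\lambda K g_{d}$ with $K$ orthonormal one has $\base(\base^{\transpose}\base)^{-1}=\lambda^{-1}K g_{d}^{-\transpose}$ \textemdash{} and, combined with Lemma~\ref{lem: lift to G''}'s product-measure statement, yields measure-preservation. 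The paper's route avoids reference to the chosen transversal $K^{\prime}$, while yours makes the coordinate-wise action of duality visible; both ultimately rest on the same Appendix input (in particular Lemma~\ref{lem: Ortho-dual} for the orientation and orthogonal-complement compatibilities), as you yourself note in the final paragraph.
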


Indeed, note that in all of the pairs above, the right-hand lattice
spans the orthogonal subspace to the one spanned by the left-hand
lattice. In other words, their homothety classes are elements in $\pairspace{d,n}$. 

\begin{proof}[Proof of Theorem \ref{thm: counting orthogonal lattices}]
Note that the three pairs $\left(\lat,\perpen{\lat}\right)$, $\left(\dual{\lat},\perpen{\lat}\right)$
and $\left(\dual{\lat},\factor{\lat}\right)$ are obtained from the
pair in Theorem \ref{thm: A counting} $\left(\lat,\factor{\lat}\right)$
by taking the dual of one of $\lat,\factor{\lat}$ or of both. According
to Proposition \ref{prop: All that Dual... and P_d,n} and Proposition
\ref{rem: dual of unimodular}, taking the dual of one of the lattices
in a pair or of both is a measure preserving auto-diffeomorphism of
$\pairspace{d,n}$. 
\end{proof}

\section{\label{sec: RI coordinates}Refined Iwasawa components in $\protect\sl n\left(\protect\RR\right)$.}

\subsection{\label{subsec: Introducing RI coordinates}Refining the Iwasawa decomposition
of $\protect\sl n\left(\protect\RR\right)$}

Set $G=G_{n}:=\sl n\left(\RR\right)$ and let $G=KAN$ be the Iwasawa
decomposition of $G$, meaning that $K=K_{n}$ is $\so n\left(\RR\right)$,
$A=A_{n}$ is the diagonal subgroup in $G$, and $N=N_{n}$ is the
subgroup of upper unipotent matrices. We also let $P_{n}=A_{n}N_{n}$.
Consider the following isomorphic copy of $\sl d\left(\RR\right)\times\sl{n-d}\left(\RR\right)$
inside $G$,
\[
G^{\dprime}:=\left[\begin{array}{c|c}
G_{d} & 0_{d,n-d}\\
\hline 0_{n-d,d} & G_{n-d}
\end{array}\right]=\left[\begin{array}{c|c}
\sl d\left(\RR\right) & 0_{d,n-d}\\
\hline 0_{n-d,d} & \sl{n-d}\left(\RR\right)
\end{array}\right].
\]
Write $G^{\dprime}=K^{\dprime}A^{\dprime}N^{\dprime}$ for the Iwasawa
decomposition of $G^{\dprime}$, namely
\begin{eqnarray*}
K^{\dprime} & := & K\cap G^{\dprime}=\left[\begin{array}{c|c}
K_{d} & 0_{d,n-d}\\
\hline 0_{n-d,d} & K_{n-d}
\end{array}\right]=\left[\begin{array}{c|c}
\so d\left(\RR\right) & 0_{d,n-d}\\
\hline 0_{n-d,d} & \so{n-d}\left(\RR\right)
\end{array}\right],\\
A^{\dprime} & := & A\cap G^{\dprime}=\left[\begin{array}{c|c}
A_{d} & 0_{d,n-d}\\
\hline 0_{n-d,d} & A_{n-d}
\end{array}\right],\\
N^{\dprime} & := & N\cap G^{\dprime}=\left[\begin{array}{c|c}
N_{d} & 0_{d,n-d}\\
\hline 0_{n-d,d} & N_{n-d}
\end{array}\right].
\end{eqnarray*}
Let 
\[
P^{\dprime}:=A^{\dprime}N^{\dprime}\;\text{and}\;\wc:=KP^{\dprime}
\]
(note that $\wc$ is not a group, but it is a smooth manifold). To
complete the definition of the Refined Iwasawa decomposition, we define
$K^{\prime},A^{\prime},N^{\prime}$ that complete $K^{\dprime},A^{\dprime},N^{\dprime}$
to $K$, $A$ and $N$ respectively. Let
\[
N^{\prime}:=\left[\begin{array}{c|c}
\idmat d & \RR^{d,n-d}\\
\hline 0_{n-d,d} & \idmat{n-d}
\end{array}\right],\quad A^{\prime}:=\left[\begin{array}{c|c}
a^{\frac{1}{d}}\idmat d & 0_{d,n-d}\\
\hline 0_{n-d,d} & a^{-\frac{1}{n-d}}\idmat{n-d}
\end{array}\right],
\]
and observe that $N=N^{\dprime}N^{\prime}$, $A=A^{\dprime}A^{\prime}$,
and that $A^{\prime}$ is a one-parameter subgroup of $A$ which commutes
with $G^{\dprime}$. Often we would like to restrict to the upper
$d\times d$ (resp.\ lower $\left(n-d\right)\times\left(n-d\right)$)
block of $G^{\dprime}$, hence we denote 
\[
G_{d}^{\dprime}:=\left[\begin{array}{c|c}
G_{d} & 0_{d,n-d}\\
\hline 0_{n-d,d} & \idmat{n-d}
\end{array}\right]=\left[\begin{array}{c|c}
\sl d\left(\RR\right) & 0_{d,n-d}\\
\hline 0_{n-d,d} & \idmat{n-d}
\end{array}\right]
\]
(resp. $G_{n-d}^{\dprime}:=\left[\begin{array}{c|c}
\idmat d & 0_{d,n-d}\\
\hline 0_{n-d,d} & G_{n-d}
\end{array}\right]$). Similarly for $K^{\dprime}$,$P^{\dprime}$ and $A^{\dprime}$. 

Fix a transversal $K^{\prime}$ of the diffeomorphism $K/K^{\dprime}\to\gras{d,n}$,
meaning that $K=K^{\prime}K^{\dprime}$; later (Proposition \ref{prop: spread models that we need})
we will restrict to a specific choice of a transversal, with some
desirable properties. Then $\wc$ is also $K^{\prime}G^{\dprime}$,
and we let 
\[
\wc_{d}:=K^{\prime}G_{d}^{\dprime}.
\]
Note that $\wc=\wc_{d}G_{n-d}^{\dprime}$. Then the Refined Iwasawa
(or RI, for short) decomposition is given by 

\[
G=K^{\prime}K^{\dprime}A^{\dprime}A^{\prime}N^{\dprime}N^{\prime}=K^{\prime}G^{\dprime}A^{\prime}N^{\prime}=KP^{\dprime}A^{\prime}N^{\prime}=\wc A^{\prime}N^{\prime}=\wc_{d}G_{n-d}^{\dprime}A^{\prime}N^{\prime}.
\]

\begin{rem}
\label{rem: true quotients L_d,n and P_d,n}In particular, the two
quotients with whom we expressed $\latspace{d,n}$ and $\pairspace{d,n}$
in the Introduction indeed agree. For $\latspace{d,n}$, we have that
$\sl n\left(\RR\right)=KP^{\dprime}A^{\prime}N^{\prime}=KP_{d}^{\dprime}P_{n-d}^{\dprime}A^{\prime}N^{\prime}$,
so 
\[
\latspace{d,n}=\sl n\left(\RR\right)/G_{d}^{\dprime}\left(\ZZ\right)G_{n-d}^{\dprime}A^{\prime}N^{\prime}=\sl n\left(\RR\right)/G_{d}^{\dprime}\left(\ZZ\right)K_{n-d}^{\dprime}P_{n-d}^{\dprime}A^{\prime}N^{\prime}\diffeo KP_{d}^{\dprime}/G_{d}^{\dprime}\left(\ZZ\right)K_{n-d}^{\dprime}.
\]
Similarly,
\[
\pairspace{d,n}=\sl n\left(\RR\right)/G^{\dprime}\left(\ZZ\right)N^{\prime}A^{\prime}\diffeo KP^{\dprime}/G^{\dprime}\left(\ZZ\right).
\]
\end{rem}

\begin{rem}
\label{rem: k' detemines projection to U_d and U_n-d}The choice of
a transversal $K^{\prime}$ determines projections $\latspace{d,n}\to\unilatspace d$
and $\pairspace{d,n}\to\unilatspace{n-d}$: both these maps are induced
by 
\begin{eqnarray*}
\sl n(\RR) & \to & \sl n(\RR)\\
k'g^{\dprime}a^{\prime}n^{\prime} & \mapsto & g^{\dprime}a^{\prime}n^{\prime}.
\end{eqnarray*}
In particular, for every $d$ lattice $\lat$ and every $\left(n-d\right)$
lattice $\qlat$ in $\perpen{V_{\lat}}$, the projections $\unisimlat{\lat}\in\unilatspace d$
and $\unisimlat{\qlat}\in\unilatspace{n-d}$ from Notation \ref{nota: elements in spaces}
are now determined.
\end{rem}

\subsection{Parameterizations of the RI components}

Our use of the RI decomposition of $\sl n\left(\RR\right)$ is motivated
by the fact that, as we shall see in Section \ref{sec: Fundamental domains},
the components appearing in this decomposition or some special subsets
of theirs are in a sense isomorphic to the homogeneous spaces of $\sl n\left(\RR\right)$
that we mentioned in the Introduction. Specifically, we shall state
(Proposition \ref{prop: spread models that we need}) that certain
subsets of $G_{d}$, $P_{d}$, $\wc_{d}$ and $\wc$ are  parameterized
by the spaces $\unilatspace d$, $\shapespace d$, $\latspace{d,n}$
and $\pairspace{d,n}$; in particular, subsetes of $G^{\dprime}=\left[\begin{smallmatrix}G_{d} & 0\\
0 & G_{n-d}
\end{smallmatrix}\right]$ and $P^{\dprime}=\left[\begin{smallmatrix}P_{d} & 0\\
0 & P_{n-d}
\end{smallmatrix}\right]$ are parameterized by $\unilatspace d\times\unilatspace{n-d}$ and
$\shapespace d\times\shapespace{n-d}$ respectively. Also, it is clear
that $K^{\prime}$ and $N^{\prime}$ can be parameterized $\gras{d,n}$
and $\RR^{d\left(n-d\right)}$; the parameterization for $K^{\prime}\subset K$
by the Grassmannian will be the inverse of the restriction of the
quotient map, and a choice of an isomorphism for $N^{\prime}$ is
arbitrary (but fixed). We let $k_{U}^{\prime}$ denote the element
of $K^{\prime}$ that corresponds to an oriented $d$-dimensional
subspace $U$ in $\gras{d,n}$. 

When $\domN$ is a subset of a space that is parameterized by a RI
component $\GIcomp$ (or a subset of it), we let $\parby{\GIcomp}{\domN}$
denote the image of $\domN$ in $\GIcomp$ under this parameterization.
For example, $\domN\subset\RR^{d\left(n-d\right)}$, then $N_{\domN}^{\prime}$
denotes its image in $N^{\prime}$. Finally, the groups $A,A^{\dprime},A^{\prime}$
are clearly isomorphic to $\RR^{n-1}$, $\RR^{d-1}\times\RR^{n-d-1}$
and $\RR$ respectively. We choose the following parameterizations:
$a=\diag{a_{1},\ldots a_{n}}$ will be written as $a_{\underline{h}}=a_{\left(h_{1},\ldots,h_{n-1}\right)}$
if 
\[
\left(a_{1},a_{2},\ldots,a_{n-1},a_{n}\right)=\brac{e^{-h_{1}/2},e^{\left(h_{1}-h_{2}\right)/2},\ldots,e^{\left(h_{n-2}-h_{n-1}\right)/2},e^{h_{n-1}/2}}.
\]
Accordingly, we write $A^{\dprime}\ni a^{\dprime}=a_{\underline{s},\underline{w}}^{\dprime}$
with $\underline{s}=\left(s_{1},\ldots,s_{d-1}\right)\in\RR^{d-1}$
and $\underline{w}=\left(w_{1},\ldots,w_{n-d-1}\right)\in\RR^{n-d-1}$
if $a^{\dprime}=\left[\begin{smallmatrix}a_{\underline{s}}\in A_{d}\\
 & a_{\underline{w}}\in A_{n-d}
\end{smallmatrix}\right]$. For an element in $A^{\prime}$, we let 
\[
a_{t}^{\prime}:=\left[\begin{smallmatrix}e^{\frac{t}{d}}\idmat d & 0\\
0 & e^{-\frac{t}{n-d}}\idmat{n-d}
\end{smallmatrix}\right].
\]

\subsection{\label{subsec: RI Haar measure}RI decomposition of the Haar measure
on $\protect\sl n\left(\protect\RR\right)$}

It is well known (e.g. \cite[Prop. 8.43]{Knapp}) that a Haar measure
on $\sl n\left(\RR\right)$ can be decomposed according to the Iwasawa
components of $\sl n\left(\RR\right)$. Let us extend this to a Refined
Iwasawa decomposition of the Haar measure on $\sl n\left(\RR\right)$.

For every $\GIcomp\subset G=\sl n\left(\RR\right)$ appearing as a
component in the Iwasawa or Refined Iwasawa decompositions of $G$,
we let $\mu_{\GIcomp}$ denote a measure on $\GIcomp$ as follows:
$\mu_{K},\mu_{N}$ are Haar measures, and so do $\mu_{K^{\dprime}}$,
$\mu_{N^{\dprime}}$, $\mu_{P^{\dprime}}$, $\mu_{G^{\dprime}},\mu_{G},\mu_{G_{i}},\mu_{K_{i}},\mu_{N_{i}}$
and $\mu_{N^{\prime}}$ for $i=d,n-d$. The measures $\mu_{N}$, $\mu_{N^{\prime}},\mu_{N_{i}}$
and $\mu_{N^{\dprime}}$ are Lebesgue; as $N=N^{\dprime}\ltimes N^{\prime}$
and all three groups are unimodular, $\mu_{N}=\mu_{N^{\dprime}}\times\mu_{N^{\prime}}$.
We assume that the Haar measures $\mu_{K}$ and $\mu_{K^{\dprime}}$
are normalized such that $\mu_{K^{\dprime}}\left(K^{\dprime}\right)=\mu_{K_{d}}\left(K_{d}\right)\mu_{K_{n-d}}\left(K_{n-d}\right)$,
where (as we have mentioned in Section \ref{sec: Introduction}) $\mu_{K_{i}}\left(K_{i}\right)=\prod_{j=1}^{i-1}\leb{\sphere j}$.
Since $K^{\prime}$ parameterizes $\gras{d,n}=K/K^{\dprime}$, we
can endow it with a measure $\mu_{K^{\prime}}$ that is the pullback
of a $K$-invariant Radon measure on $K/K^{\dprime}$ so that $\mu_{K}=\mu_{K^{\prime}}\times\mu_{K^{\dprime}}$. 

The measures $\mu_{A},\mu_{A^{\prime}},\mu_{A^{\dprime}}$ are Radon
measures such that 
\[
\mu_{A^{\prime}}=e^{nt}dt,\quad\mu_{A_{i}}=\prod_{j=1}^{i}e^{-h_{j}}dh_{j}
\]
 and 
\[
\mu_{A^{\dprime}}=\mu_{A_{d}^{\dprime}}\times\mu_{A_{n-d}^{\dprime}},\quad\mu_{A}=\mu_{A^{\prime}}\times\mu_{A^{\dprime}}.
\]
Note that these measures are non-Haar. We use the measures on $A_{i},A^{\dprime}$
to determine a (normalization of the) Haar measure on $G_{i}$:
\[
\mu_{G_{i}}=\mu_{K_{i}}\times\mu_{A_{i}}\times\mu_{N_{i}};\quad\mu_{G^{\dprime}}=\mu_{G_{d}^{\dprime}}\times\mu_{G_{n-d}^{\dprime}}.
\]
Since $\wc$ is diffeomorphic to the group $K\times P^{\dprime}$,
we endow it with the Haar measure on this group: $\mu_{\wc}=\mu_{K}\times\mu_{P^{\dprime}}$.
Since $\mu_{K}=\mu_{K^{\prime}}\times\mu_{K^{\dprime}}$, we also
have that also $\mu_{\wc}=\mu_{K^{\prime}}\times\mu_{G^{\dprime}}$.
Since the product map $K^{\prime}\times G_{d}^{\dprime}\to Q_{d}=K^{\prime}G_{d}^{\dprime}$
 is a Borel bijection, we may define $\mu_{Q_{d}}=\mu_{K^{\prime}}\times\mu_{G_{d}^{\dprime}}$. 

All in all we have the following:
\begin{align}
\begin{array}{c}
\mu_{G}=\mu_{K^{\prime}}\times\mu_{K^{\dprime}}\times\mu_{A^{\dprime}}\times\mu_{A^{\prime}}\times\mu_{N^{\dprime}}\times\mu_{N^{\prime}}=\mu_{K^{\prime}}\times\mu_{G^{\dprime}}\times\mu_{A^{\prime}}\times\mu_{N^{\prime}}\\
=\mu_{\wc}\times\mu_{A^{\prime}}\times\mu_{N^{\prime}}=\mu_{\wc_{d}}\times\mu_{G_{n-d}^{\dprime}}\times\mu_{A^{\dprime}}\times\mu_{N^{\dprime}},\\
\\
\mu_{G^{\dprime}}=\mu_{G_{d}^{\dprime}}\times\mu_{G_{n-d}^{\dprime}},\:\mu_{P^{\dprime}}=\mu_{P_{d}^{\dprime}}\times\mu_{P_{n-d}^{\dprime}},\:\mu_{K^{\dprime}}=\mu_{K_{d}^{\dprime}}\times\mu_{K_{n-d}^{\dprime}},\:\mu_{A^{\dprime}}=\mu_{A_{d}^{\dprime}}\times\mu_{A_{n-d}^{\dprime}}.
\end{array}\label{eq: Haar measure on G}
\end{align}
To abbreviate, we will use  $\mu$ instead of $\mu_{G}$. 

\subsection{Interpretation of the RI components of $g\in\protect\sl n\left(\protect\RR\right)$}

Like the Iwasawa decomposition, the Refined Iwasawa decomposition
induces coordinates on $\sl n\left(\RR\right)$. The RI components
of an element $g\in\sl n\left(\RR\right)$ encode certain information
regarding the lattices spanned by its columns, as explained in the
proposition below. To state it, we extend the definition of primitiveness
and of factor lattices from $d$-lattices in $\ZZ^{n}$, to $d$-lattices
in any full lattice of $\RR^{n}$. 
\begin{defn}
Assume that a $d$-lattice $\lat$ is contained inside a full lattice
$\latfull<\RR^{n}$. We say that $\lat$ is \emph{primitive} inside
(or with respect to) $\latfull$ if $\lat=\latfull\cap V_{\lat}$.
In other words, if there is no subgroup of $\latfull$ that lies inside
$V_{\lat}$ and properly contains $\lat$. Given a $d$-lattice $\lat$
that is primitive inside $\latfull$, we define the \emph{factor
lattice of $\lat$ (w.r.t.\ $\latfull$)}, denoted $\factorg{\lat}{\latfull}$,
as the orthogonal projection of $\latfull$ into the space $\perpen{\brac{V_{\lat}}}$.
When $\lat$ is primitive inside $\ZZ^{n}$, we omit the explicit
mentioning of $\ZZ^{n}$, and say just that $\lat$ is primitive.
Accordingly, we denote $\factor{\lat}$, and say that it is the factor
lattice of $\lat$.
\end{defn}

Let us also introduce the following notation. 
\begin{notation}
For $g\in\sl n\left(\RR\right)$ and $d\in\left\{ 1,\ldots,n-1\right\} $,
let $\lat_{g}$ denote the lattice spanned by the columns of $g$,
and let $\lat_{g}^{d}$ denote the lattice spanned by the first $d$
columns of $g$. Let $\latlast{\lat_{g}}d$ denote the lattice spanned
by the last $d$ columns of $\lat_{g}$.  Finally, given a lattice
$\lat$ in $\RR^{n}$, recall that $V_{\lat}$ denotes the linear
space spanned by $\lat$. 
\end{notation}

\begin{prop}
\label{prop: explicit RI coordinates of g}Let $g\in\sl n\left(\RR\right)$
and $1\leq d\leq n-1$. Denote $\lat=\lat_{g}^{d}$, the lattice spanned
by the first $d$ columns of $g$, and $\lat^{\sharp}=\factorg{\lat}{\lat_{g}}$,
the factor lattice of $\lat$ w.r.t.\ $\lat_{g}$. Write $g=kan=qa_{t}^{\prime}n^{\prime}$
with $q=k_{U}^{\prime}g^{\dprime}=k_{U}^{\prime}k^{\dprime}a_{\underline{s},\wvec}^{\dprime}n^{\dprime}$.
Let $g^{\dprime}=\left[\begin{smallmatrix}g_{d} & 0\\
0 & g_{n-d}
\end{smallmatrix}\right]$, $g_{d}^{\dprime}=\left[\begin{smallmatrix}g_{d} & 0\\
0 & \idmat{n-d}
\end{smallmatrix}\right]$, $g_{n-d}^{\dprime}=\left[\begin{smallmatrix}\idmat d & 0\\
0 & g_{n-d}
\end{smallmatrix}\right]$, and similarly for $p^{\dprime}$.  The RI components of $g$ represent
parameters related to $\lat$ in the following way:
\[
\begin{array}{cccccccccc}
\brac i & U & = & V_{\lat} &  & \brac i^{\sharp} & \perpen U & = & V_{\lat^{\sharp}}\\
\brac{ii} & e^{t} & = & \covol{\lat} &  & \brac{ii}^{\sharp} & e^{-t} & = & \covol{\lat^{\sharp}}\\
\brac{iii} & e^{\frac{it}{d}-\frac{s_{i}}{2}} & = & \covol{\lat^{i}} &  & \brac{iii}^{\sharp} & e^{-\frac{jt}{n-d}-\frac{w_{j}}{2}} & = & \covol{\left(\lat^{\sharp}\right)^{j}}\\
\brac{iv} & \lat_{q}^{d} & \in & \unilat{\lat} &  & \brac{iv}^{\sharp} & \latlast{\lat_{q}}{n-d} & \in & \unilat{\lat^{\sharp}}\\
\brac v & \lat_{g_{d}} & \in & \unisimlat{\lat} &  & \brac v^{\sharp} & \lat_{g_{n-d}} & \in & \unisimlat{\lat^{\sharp}}\\
\brac{vi} & \ensuremath{\lat_{p_{d}}} & \in & \shape{\lat} &  & \brac{vi}^{\sharp} & \ensuremath{\lat_{p_{n-d}}} & \in & \shape{\lat^{\sharp}}
\end{array}
\]
for every $1\leq i\leq d$ and $1\leq j\leq n-d$, and: $\brac{vii}$
$\brac{\lat_{q}^{d},\latlast{\lat_{q}}{n-d}}\in\unilat{\brac{\lat_{g}^{d},\factorg{\brac{\lat_{g}^{d}}}{\lat_{g}}}}$.
\end{prop}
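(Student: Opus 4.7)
The plan is to carry out a direct matrix computation of the first $d$ columns of $g$ and of the orthogonal projection of its last $n-d$ columns onto $\perpen{U}$, expressed in terms of the RI components $k'_U$, $g_d$, $g_{n-d}$, $t$, $\underline{s}$, $\underline{w}$. From these two explicit formulas every one of (i)--(vii) and (i)$^\sharp$--(vi)$^\sharp$ falls out, so the main content is bookkeeping together with two block-structure observations about $K'$ and $N'$.

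First, substitute $g = k'_U k'' a''_{\underline{s},\underline{w}} n'' a'_t n'$. Since $n' = \left[\begin{smallmatrix}\idmat d & X \\ 0 & \idmat{n-d}\end{smallmatrix}\right]$, the first $d$ columns of $n'$ are $\left[\begin{smallmatrix}\idmat d \\ 0\end{smallmatrix}\right]$, so the first $d$ columns of $g$ coincide with those of $k'_U g'' a'_t$. As $g'' = \left[\begin{smallmatrix}g_d & 0 \\ 0 & g_{n-d}\end{smallmatrix}\right]$ is block-diagonal and $a'_t$ is block-scalar, one obtains
\[
\text{first }d\text{ columns of }g \;=\; e^{t/d}\, k'_U \left[\begin{smallmatrix}g_d \\ 0\end{smallmatrix}\right].
\]
By the defining property of the transversal $K'$ for $K/K''$ parameterizing $\gras{d,n}$, the matrix $k'_U$ sends $\RR^d\times\{0\}$ isometrically and orientation-preservingly onto $U$, and sends $\{0\}\times\RR^{n-d}$ isometrically onto $\perpen U$. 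Claims (i), (ii), (iv), (v), (vi) then follow: $V_{\lat}=U$; $\covol{\lat}=e^t|\det g_d|=e^t$; comparing with $q=k'_U g''$, whose first $d$ columns are $k'_U\left[\begin{smallmatrix}g_d\\0\end{smallmatrix}\right]$, one gets $\lat = e^{t/d}\lat_q^d$, so they share a homothety class; the unimodular lattice $\lat_{g_d}\subset\RR^d$ is related to $\lat_q^d$ by the isometry $k'_U$ and hence represents $\unisimlat{\lat}$; and $\so d(\RR)g_d = \so d(\RR)p_d$ implies $\lat_{p_d}\in\shape{\lat}$. For (iii), use $g_d = k_d a_{\underline{s}} n_d$: since $n_d$ is upper unipotent, the first $i$ columns of $g_d$ span a lattice of covolume $a_1\cdots a_i$, which by the parameterization of $A_d$ telescopes to $e^{-s_i/2}$; multiplying by $e^{it/d}$ yields the stated expression (with the convention $s_d=0$ covering $i=d$).

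An analogous computation gives the last $n-d$ columns of $g$:
\[
\text{last }n-d\text{ columns of }g \;=\; k'_U \left[\begin{smallmatrix}e^{t/d}g_d X \\ e^{-t/(n-d)}g_{n-d}\end{smallmatrix}\right].
\]
The first $d$ columns of $g$ form a basis of $\lat=\lat_g^d$ that extends to a basis of $\lat_g$, so $\lat$ is primitive in $\lat_g$ and $\lat^\sharp$ is well defined as the orthogonal projection of $\lat_g$ onto $\perpen U$. Since the image of the upper block $e^{t/d}g_d X$ under $k'_U$ lies in $U$, this projection annihilates it, leaving
\[
\lat^\sharp \;=\; e^{-t/(n-d)}\, \latlast{\lat_q}{n-d}.
\]
All the $\sharp$-claims are read off exactly as before, with $(U, g_d, p_d, \underline{s}, \lat_q^d, t/d)$ replaced by $(\perpen U, g_{n-d}, p_{n-d}, \underline{w}, \latlast{\lat_q}{n-d}, -t/(n-d))$. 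Finally, (vii) follows by combining (iv) and (iv)$^\sharp$: taking $\a = e^t$ in the defining equivalence of $\pairspace{d,n}$ yields $(\lat_g^d, \lat^\sharp) = (\a^{1/d}\lat_q^d,\, \a^{-1/(n-d)}\latlast{\lat_q}{n-d})$, which is precisely the statement that the two pairs share a homothety class.

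The only step beyond routine matrix multiplication is confirming the claimed action of $k'_U$ on the orthogonal subspaces $\RR^d\times\{0\}$ and $\{0\}\times\RR^{n-d}$; this is the defining property of the transversal $K'\subset K$ combined with the block structure of $K''$, so there is no real obstacle.
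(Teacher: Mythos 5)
Your proof is correct and follows essentially the same route as the paper's: both arguments track the first $d$ columns and the $\perpen U$-projection of the last $n-d$ columns of $g$ through the RI decomposition, using that $k'_U$ maps $\RR^d\times\{0\}$ to $U$ and $\{0\}\times\RR^{n-d}$ to $\perpen U$ (you invoke the defining property of the transversal $K'$, the paper reaches the same conclusion via Gram--Schmidt on the Iwasawa $K$-component), and both read off the covolume claims from the block-diagonal/upper-triangular structure. Your explicit block-matrix computation and the closing remark about the convention $s_d=0$ make the telescoping in $(iii)$ cleaner than the paper's phrasing, but the content is the same.
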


\begin{proof}
Since the columns of $k$ are obtained by performing the Gram-Schmidt
orthogonalization procedure on the columns of $g$, we have that 
the first $d$ columns of $k$ span $V_{\lat}$. Since $k^{\prime}=k\left(k^{\dprime}\right)^{-1}$
where $k^{\dprime}\in\left[\begin{smallmatrix}\so d\left(\RR\right) & 0\\
0 & \so{n-d}\left(\RR\right)
\end{smallmatrix}\right]$, then the first $d$ columns of $k^{\prime}$ span (and are in fact
an orthonormal basis to) the same space as the first $d$ columns
of $k$, which is $V_{\lat}$. This proves $\brac i$ and  $\brac i^{\sharp}$,
by definition of orientation on $\perpen{V_{\lat}}$. 

Write $g\brac{n^{\prime}}^{-1}\brac{a^{\prime}}^{-1}=k^{\prime}g^{\dprime}=q$;
right multiplication by an element of $N^{\prime}$ does not change
the first $d$ columns of $g$, and right multiplication by $\brac{a^{\prime}}^{-1}$
multiplies each of these columns by $e^{-\frac{t}{d}}$. This means
that $e^{-\frac{t}{d}}\lat=\lat_{q}^{d}$, proving $\left(iv\right)$,
and $\left(v\right)$, $\left(vi\right)$ directly follow. Also, notice
that $\lat_{q}^{d}$ has covolume one, since it is a rotation of
the unimodular $\lat_{g_{d}^{\dprime}}^{d}$ ; then, by considering
the covolumes of the lattices on both sides, we obtain $e^{-t}\covol{\lat}=1$,
proving $\left(ii\right)$. 

Considering $g^{\dprime}$, it is clear that the lattice $\lat_{g_{n-d}}$
is the factor lattice of $\lat_{g_{d}}$ w.r.t.\ $\lat_{g^{\dprime}}$.
Rotating it by left multiplication by $k^{\prime}$, we have that
the lattice  $\latlast{\lat_{q}}{n-d}$ is the factor lattice of
$\lat_{q}^{d}$ w.r.t.\ $\lat_{q}$. But since $q=g\brac{n^{\prime}}^{-1}\brac{a^{\prime}}^{-1}$,
we may also say that $\latlast{\lat_{q}}{n-d}$ is the factor lattice
of $\lat_{g\brac{n^{\prime}}^{-1}\brac{a^{\prime}}^{-1}}^{d}<V_{\lat}$
w.r.t.\ $\lat_{g\brac{n^{\prime}}^{-1}\brac{a^{\prime}}^{-1}}$,
namely it is the projection of $\lat_{g\brac{n^{\prime}}^{-1}\brac{a^{\prime}}^{-1}}$
to $\perpen{V_{\lat}}$. Noticing that this projection kills the contribution
of $\brac{n^{\prime}}^{-1}$, as well as the first $d$ columns of
$g\brac{n^{\prime}}^{-1}\brac{a^{\prime}}^{-1}$, we remain only with
the projection of the lattice spanned by the last $n-d$ columns of
$g$, on which $\brac{a^{\prime}}^{-1}$ acts as multiplication by
$e^{-\frac{t}{n-d}}$. In other words, $\latlast{\lat_{q}}{n-d}$
is in fact  $e^{-\frac{t}{n-d}}\lat^{\sharp}$. This proves $\left(iv\right)^{\sharp}$,
from which $\left(v\right)^{\sharp}$ and $\left(vi\right)^{\sharp}$
follow, and then similarly to how we proved $\left(ii\right)$ we
also obtain $\left(ii\right)^{\sharp}$\footnote{The fact that $\covol{\lat^{\sharp}}=\covol{\lat_{g}}/\covol{\lat}$
is also proved in the Appendix, Proposition \ref{prop: factor covolume}.}. Since $\lat_{q}^{d}$ and $\latlast{\lat_{q}}{n-d}$ span orthogonal
subspaces and are both unimodular, then $\left(iv\right)$ and $\left(iv\right)^{\sharp}$
imply $\brac{vii}$.

It is well known that if $g=kan$ and $a=\diag{\a_{1},\ldots,\a_{r}}$,
then ${\scriptstyle \prod_{1}^{i}}\a_{j}=\covol{\lat_{g}^{i}}$. Since
the lattice $\lat_{g_{d}^{\dprime}}^{d}$ is a rotation of $e^{-t/d}\lat$,
it has the same covolume and partial covolumes; writing $g_{d}^{\dprime}=k_{d}^{\dprime}a_{d}^{\dprime}n_{d}^{\dprime}$,
we have that the product of the first $1\leq i\leq d$ entries of
$a_{d}^{\dprime}$ is $e^{-\frac{it}{d}}\covol{\lat_{g_{d}^{\dprime}}^{i}}=e^{-\frac{it}{d}}\covol{\lat_{g}^{i}}$.
On the other hand, since we have $a_{d}^{\dprime}=\diag{e^{-\frac{s_{1}}{2}},e^{\frac{s_{1}-s_{2}}{2}},\ldots,e^{\frac{s_{d-2}-s_{d-1}}{2}},e^{\frac{s_{d-1}}{2}},1,\ldots,1}$,
comparing the products of the first $i$ elements implies $\covol{\lat^{i}}\cdot e^{-\frac{it}{d}}=e^{-\frac{s_{i}}{2}}$
and proves $\left(iii\right)$. Doing the same for $a_{n-d}^{\dprime}=\diag{1,\ldots,1,e^{-\frac{w_{1}}{2}},e^{\frac{w_{1}-w_{2}}{2}},\ldots,e^{\frac{w_{n-d-2}-w_{n-d-1}}{2}},e^{\frac{w_{n-d-1}}{2}}}$,
while recalling that $\latlast{\lat_{g_{n-d}^{\dprime}}}{n-d}$ is
a rotation of $e^{\frac{t}{n-d}}\lat^{\sharp}$, implies $e^{\frac{jt}{n-d}}\covol{\brac{\lat^{\sharp}}^{j}}=e^{-\frac{w_{j}}{2}}$
for every $1\leq j\leq n-d$ and proves $\left(iii\right)^{\sharp}$. 
\end{proof}

\section{\label{sec: Fundamental domains}Sets of representatives for the
spaces in Theorem \ref{thm: A counting}}

In this section we find ``isomorphic'' copies of the spaces $\shapespace d\times\shapespace{n-d}$,
$\unilatspace d\times\unilatspace{n-d}$, $\latspace{d,n}$, $\pairspace{d,n}$
and $\gras{d,n}$ inside the RI components of $\sl n\left(\RR\right)$,
namely subsets of the RI components that are parameterized by these
spaces. These will be in fact sets of representatives for the cosets
that are the elements of these spaces, that are carefully chosen so
that they imitate the geometry of the spaces as much as possible;
for example, they will have the property that under the quotient map
(that is one to one on a set of representatives), a BCS in the space
will correspond to a BCS in the set. In \cite[Section 6]{HK_WellRoundedness}
we have defined such ``good'' sets of representatives, and named
them \emph{spread models}. For the convenience of the reader, we omit
the definition, and instead list here the properties of spread models
that will be used in this work. 
\begin{prop}[{see \cite[Propositions 6.5, 6.7 and 6.10]{HK_WellRoundedness}}]
\label{prop: spread models - properties}Suppose that a Lie group
$H$ acts properly and smoothly on a manifold $\manifold$ and $F\subset\manifold$
is a spread model for the action of $H$. For measurable sets $B\subseteq\manifold/H$
and $\mathcal{B}\subseteq H$ the following holds:
\begin{enumerate}
\item If $B$ and $\mathcal{B}$ are BCS, then so does $\manifold_{B}\cdot\mathcal{B}\subseteq\manifold$,
where $\manifold_{B}$ is a full set of representatives for $B$ inside
$F$; in the case where $H$ is also discrete, then $B$ is BCS iff
$\manifold_{B}$ is BCS.
\item if $\mu_{H}$ and $\mu_{\manifold}$ are Radon measures on their respective
spaces, then there is a unique Radon measure $\mu_{\manifold/H}$
on $\manifold/H$ so that 
\[
\mu_{\manifold}\left(\manifold_{B}\cdot\mathcal{B}\right)=\mu_{\manifold/H}\left(B\right)\cdot\mu_{H}\left(\mathcal{B}\right).
\]
 
\end{enumerate}
\end{prop}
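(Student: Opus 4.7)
The plan is to build on the defining property of a spread model $F \subset \manifold$: namely that $F$ is a smooth section (with a controlled stratified boundary) of $\manifold \to \manifold/H$, such that the action map
\[
F \times H \to \manifold, \qquad (f,h) \mapsto f\cdot h,
\]
is a smooth bijection onto a conull subset of $\manifold$ and a local diffeomorphism on the interior. Under this identification, the set $\manifold_B \cdot \mathcal{B}$ corresponds, up to a negligible stratum, to the product $\manifold_B \times \mathcal{B}$, and this product description is the engine for both claims.

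For part $(1)$, I would first show that $\manifold_B \subset F$ inherits the BCS property from $B \subset \manifold/H$: the section map $\manifold/H \to F$ is a diffeomorphism off the singular strata, so preimages of embedded $C^1$ submanifolds of codimension at least one remain embedded $C^1$ submanifolds of the same codimension, and local finiteness is preserved. I would then pull back the topological boundary of $\manifold_B \cdot \mathcal{B}$ under the action map: up to the (already lower-dimensional) stratified boundary of $F$, it is contained in
\[
(\partial^{F} \manifold_B \times \mathcal{B}) \cup (\manifold_B \times \partial \mathcal{B}),
\]
a finite union of embedded $C^1$ submanifolds of $F \times H$ of dimension strictly below $\dim \manifold$, which the action map pushes forward to a set of the same type in $\manifold$ because it has maximal rank on the interior. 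In the discrete case the section $F \to \manifold/H$ is itself a local diffeomorphism, giving the iff for free.

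For part $(2)$, I would define $\mu_{\manifold/H}$ by the ratio
\[
\mu_{\manifold/H}(B) := \frac{\mu_\manifold\brac{\manifold_B \cdot \mathcal{K}}}{\mu_H(\mathcal{K})}
\]
for any fixed BCS $\mathcal{K} \subset H$ of positive finite measure. The first task is independence of $\mathcal{K}$: for two choices, comparing via their disjoint union $\mathcal{K}_1 \sqcup \mathcal{K}_2$ and using additivity of both measures reduces everything to verifying the factorization on product rectangles. I would then extend the identity $\mu_\manifold(\manifold_B \cdot \mathcal{B}) = \mu_{\manifold/H}(B)\,\mu_H(\mathcal{B})$ from products to finite disjoint unions of products, and finally to arbitrary BCS $\mathcal{B}$ by Radon regularity. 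Uniqueness and the Radon property of $\mu_{\manifold/H}$ fall out immediately from the defining ratio and the Radon regularity of $\mu_\manifold$.

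The main obstacle, and the reason this is not a one-line disintegration, is the interface between the potentially singular (orbifold) intrinsic structure of $\manifold/H$ and the ambient smooth structure inherited from $F \subset \manifold$: the section map must be controlled enough that boundary strata neither proliferate nor collapse when transported between the two. This is precisely what the definition of a spread model in \cite{HK_WellRoundedness} is engineered to ensure; once one grants the near-global product parameterization $F \times H \cong \manifold$, both parts reduce to Fubini together with standard boundary bookkeeping.
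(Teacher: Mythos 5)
The paper does not prove Proposition \ref{prop: spread models - properties}: it is stated as a citation of Propositions 6.5, 6.7 and 6.10 of \cite{HK_WellRoundedness}, and the definition of a \emph{spread model} is deliberately omitted from the present paper (the authors say explicitly that they omit it and list only the properties used later). There is therefore no in-paper proof for your argument to be compared against; anything you write necessarily rests on a reconstructed definition, which cannot be checked from what appears here.

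Granting your reconstruction --- that $(f,h)\mapsto f\cdot h$ is a smooth, almost-everywhere bijective parameterization $F\times H\to\manifold$ of maximal rank on the interior --- the sketch of part (1) is plausible: the containment of $\partial\brac{\manifold_{B}\cdot\mathcal{B}}$ in the union of the two product boundary strata together with the degenerate strata of $F$, combined with the local formulation of Definition \ref{def: BCS}, should yield the claim, and the discrete case gives the reverse implication because the section is then a local diffeomorphism. But part (2), as you wrote it, has a genuine gap. The statement is phrased for \emph{arbitrary} Radon measures $\mu_{\manifold}$ and $\mu_{H}$, and in that generality it is false: take $\mu_{\manifold}$ to be a point mass at $f_{0}\cdot h_{0}$ and $\mu_{H}$ to be Haar; then $\mu_{\manifold}\brac{\manifold_{B}\cdot\mathcal{B}}$ takes only the values $0$ and $1$ and cannot equal $\mu_{\manifold/H}\brac B\,\mu_{H}\brac{\mathcal{B}}$ as $\mathcal{B}$ ranges over nested neighborhoods of $h_{0}$ of different Haar measure. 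Your plan of defining $\mu_{\manifold/H}$ by a ratio against a fixed reference set $\mathcal{K}$ and then checking ``independence of $\mathcal{K}$'' stalls at precisely this point: that independence is not bookkeeping, it \emph{is} the nontrivial disintegration statement, and it requires $\mu_{\manifold}$ to pull back under the spread-model parameterization to a measure on $F\times H$ that is a product with $\mu_{H}$ in the second factor (equivalently, an $H$-equivariance relating $\mu_{\manifold}$ and $\mu_{H}$). That compatibility must be built into the hypotheses, or into the definition of a spread model, in \cite{HK_WellRoundedness}; your proof does not make it explicit and cannot close without it.
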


The spread model we use for the space of shapes $\shapespace i$ ($i=d,n-d$)
is a fundamental domain for the action of $\sl i\left(\ZZ\right)$
on $\so i\left(\RR\right)\backslash\sl i\left(\RR\right)\cong P_{i}$,
whose construction is essentially due to Siegel and is made explicit
in \cite[Section 7]{Grenier_93,Schmidt_98,HK_WellRoundedness}. In
the case of $i=2$, where $P_{2}$ is diffeomorphic with the hyperbolic
upper half plane, this fundamental domain is the well known set depicted
in Figure \ref{fig: fund dom SL(2,Z)}.

\begin{figure}
\begin{centering}
\includegraphics[scale=0.4]{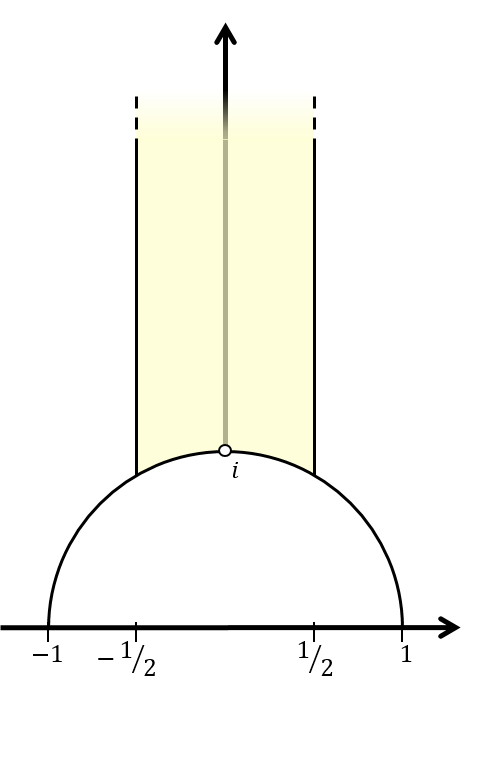}
\par\end{centering}
\caption{\label{fig: fund dom SL(2,Z)}$\protect\symfund 2$, a fundamental
domain for $\protect\sl 2\left(\protect\ZZ\right)$ in $P_{2}$ (the
hyperbolic upper half plane).}
\end{figure}

\begin{notation}
\label{nota: Fundamental domains for SL(Z)} We denote this fundamental
domain in $P_{i}$ by $\symfund i$. Also, we denote by $\groupfund i$
the fundamental domain in $\sl i\left(\RR\right)$ for $\sl i\left(\ZZ\right)$
that is obtained as 
\[
\groupfund i=\bigcup_{z\in\symfund i}K_{z}\cdot z,
\]
(\cite[Theorem 7.10 and Proposition 7.13]{HK_WellRoundedness}) where
for every $z\in\symfund i$, $K_{z}\subset\so i\left(\RR\right)$
is a fundamental domain for the finite group of rotations in $\so i\left(\RR\right)$
that preserve $\lat_{z}$. 
\end{notation}

Indeed, we will use $\symfund i$ as a spread model for $\shapespace i$,
and $\groupfund i$ as a spread model for $\latspace i$. Here is
the full list of spread models representing the spaces that appear
in Theorem \ref{thm: A counting}: 
\begin{prop}[{\cite[Prop. 8.1]{HK_WellRoundedness}}]
\label{prop: spread models that we need}There exists a spread model
$K^{\prime}\subset K$ for the space $\gras{d,n}\cong K^{\dprime}\backslash K=\wc/G^{\dprime}$.
Moreover, the following subsets of $\sl n\left(\RR\right)$ are spread
models for the corresponding spaces: 
\begin{itemize}
\item $\groupfund i\subset G_{i}$ for the spaces $\unilatspace i=G_{i}/G_{i}\left(\ZZ\right)$
for $i=d,n-d$ 
\item $\symfund i\subset P_{i}$ for the spaces $\shapespace i\diffeo K_{i}\backslash G_{i}/G_{i}\left(\ZZ\right)\diffeo P_{i}/G_{i}\left(\ZZ\right)$ 
\item $K^{\prime}\brac{\groupfund d\times\idmat{n-d}}\subset K^{\prime}G_{d}^{\dprime}=\wc_{d}\subset\wc$
for the space $\latspace{d,n}\diffeo\wc/G_{d}^{\dprime}\left(\ZZ\right)G_{n-d}^{\dprime}\left(\RR\right)$
\item $K^{\prime}\brac{\groupfund d\times\groupfund{n-d}}\subset K^{\prime}G^{\dprime}=\wc$
for the space $\pairspace{d,n}\diffeo\wc/G^{\dprime}\left(\ZZ\right)$
\end{itemize}
The measures we have defined on this spaces in the Introduction are
the unique measures from part 2 of Proposition \ref{prop: spread models - properties}. 
\end{prop}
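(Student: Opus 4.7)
The proposition decomposes into three independent tasks: (i) producing fundamental domains $\symfund i \subset P_i$ and $\groupfund i \subset G_i$ for the $\sl i(\ZZ)$-actions, (ii) producing a section $K^{\prime}$ of the fibration $K \to K/K^{\dprime} \cong \gras{d,n}$ with well-behaved boundary, and (iii) combining (i) and (ii) through the RI decomposition of Section \ref{sec: RI coordinates} to obtain spread models for $\latspace{d,n}$ and $\pairspace{d,n}$. In each case one must verify the spread-model axioms recalled in Proposition \ref{prop: spread models - properties}: full system of representatives, BCS boundary, and compatibility with the Haar-measure disintegration.

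For (i) I would invoke the Siegel/Grenier reduction theory. The domain $\symfund i \subset P_i$ is cut out by finitely many algebraic inequalities in the parabolic coordinates, hence is a BCS, and is a fundamental domain for the right $\sl i(\ZZ)$-action on $P_i$ (\cite{Grenier_93}). To lift to $\groupfund i$, observe that the $\so i(\RR)$-stabilizer of $\lat_z$ is trivial off an algebraic subset of $\symfund i$ of positive codimension; on that exceptional locus $K_z$ is chosen measurably as a fundamental domain for the finite stabilizer. Then $\groupfund i = \bigcup_{z \in \symfund i} K_z \cdot z$ is a BCS fundamental domain for $\sl i(\ZZ)$ in $G_i$, and the measure factorization recorded in Section \ref{subsec: RI Haar measure} completes the remaining axioms.

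For (ii) the bundle $K^{\dprime} \hookrightarrow K \twoheadrightarrow \gras{d,n}$ is in general topologically nontrivial, so no continuous global section exists and some boundary is unavoidable. I would produce $K^{\prime}$ cell by cell along the Schubert stratification of $\gras{d,n}$: over each open Schubert cell a smooth local section into $K$ is obtained by applying Gram--Schmidt to a standard algebraic basis of the subspace, and gluing these along the lower-dimensional overlaps yields a measurable $K^{\prime}$ whose boundary is contained in the union of Schubert subvarieties of codimension at least one, hence is a BCS. Right $K^{\dprime}$-invariance of $\mu_K$ then yields $\mu_K = \mu_{K^{\prime}} \times \mu_{K^{\dprime}}$ with the normalizations fixed in Section \ref{subsec: RI Haar measure}, and the spread-model axioms follow.

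For (iii), the bijective RI factorization $\wc = K^{\prime} G^{\dprime}$ from Remark \ref{rem: true quotients L_d,n and P_d,n} lets one read off $K^{\prime}(\groupfund d \times \groupfund{n-d})$ and $K^{\prime}(\groupfund d \times \idmat{n-d})$ as full systems of representatives for $\pairspace{d,n}$ and $\latspace{d,n}$ respectively. The BCS and measure-disintegration properties pass through products of spread models for independent right actions, so they follow formally from (i), (ii), and the Haar-measure factorization \eqref{eq: Haar measure on G}. The main obstacle is step (ii): the requirement of a BCS boundary (rather than merely a null boundary) together with the precise Haar normalization forces a geometric, rather than purely measure-theoretic, construction of $K^{\prime}$, and this is the only step where the global topology of $K$ genuinely enters.
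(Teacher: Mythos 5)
The paper does not prove this proposition; it is stated as a citation to \cite[Prop. 8.1]{HK_WellRoundedness}, where the notion of a \emph{spread model} is also defined (the paper explicitly says it omits that definition). So there is no in-paper proof to compare against, only a deferred reference. What can be checked is whether your sketch is consistent with the constructions the paper does record: $\symfund i$ is the Siegel--Grenier fundamental domain in $P_i$, and $\groupfund i = \bigcup_{z \in \symfund i} K_z \cdot z$ with $K_z$ a fundamental domain for the finite rotation stabilizer of $\lat_z$ (Notation \ref{nota: Fundamental domains for SL(Z)}, citing \cite[Theorem 7.10, Proposition 7.13]{HK_WellRoundedness}). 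Your part (i) matches this, your parts (ii)--(iii) are plausible given the RI decomposition of Remark \ref{rem: true quotients L_d,n and P_d,n} and the Haar factorization \eqref{eq: Haar measure on G}, and you correctly identify the existence of a BCS section $K^{\prime}$ as the genuinely nontrivial step, since the bundle $K \to \gras{d,n}$ has no global continuous section in general.

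Two points to tighten. First, your claim that the $\so i(\RR)$-stabilizer of $\lat_z$ is \emph{trivial} off a positive-codimension locus is off: when $i$ is even the center $\{\pm \idmat i\} \subset \so i(\RR)$ stabilizes every lattice, so the generic stabilizer is $Z(\so i(\RR))$, not the trivial group. This is precisely what the factor $\halfK i$ in the paper's volume formulas is accounting for, and your sentence should say ``the stabilizer equals $Z(\so i(\RR))$ off an algebraic subset of positive codimension.'' The construction of $K_z$ as a fundamental domain for the finite stabilizer that you then invoke is still the right move, so this is a misstatement, not a gap. Second, your Schubert-cell construction of $K^{\prime}$ only produces, a priori, a partial orthonormal $d$-frame over each cell; you need to say how you complete it to a full positively-oriented orthonormal frame in $\so n(\RR)$ (e.g.\ by Gram--Schmidt on a complementary algebraic frame), and you should state explicitly that the resulting $K^{\prime}$ has BCS boundary \emph{in $K$}, not merely that its image in $\gras{d,n}$ has boundary in Schubert subvarieties. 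These are both fixable with the tools you already invoke; as a reconstruction of a proof the paper itself does not give, the outline is credible and correctly identifies where the real work lies.
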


\section{\label{sec: Integral matrices representing primitive vectors}Correspondence
between integral matrices and primitive lattices}

The goal of this section is to translate Theorem \ref{thm: A counting}
into a counting problem of integral matrices. The first step is to
establish a correspondence between primitive $d$-lattices and integral
matrices in a fundamental domain of the following discrete group of
$\sl n\brac{\RR}$: 
\[
\Lat:=\left(N^{\prime}\rtimes G^{\dprime}\right)\left(\ZZ\right)=\left[\begin{array}{cc}
\sl d\left(\ZZ\right) & \ZZ^{d,n-d}\\
0 & \sl{n-d}\left(\ZZ\right)
\end{array}\right].
\]

\begin{prop}
\label{prop: primitive sublattices correspond to integral matrices}There
exists a bijection $\lat\leftrightarrow\ga_{\lat}$ between oriented
primitive $d$-lattices and integral matrices in a fundamental domain
of $\sl n\left(\RR\right)\curvearrowleft\Lat$, that sends a $d$-lattice
$\lat$ to $\ga_{\lat}$, the unique integral matrix in the fundamental
domain whose first $d$ columns span $\lat$. 
\end{prop}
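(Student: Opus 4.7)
The plan is to first set up a bijection between oriented primitive $d$-lattices and right $\Lat$-orbits in $\sl n\left(\ZZ\right)$, and then observe that any fundamental domain for $\sl n\left(\RR\right)\curvearrowleft\Lat$ selects one integral representative from each such orbit.

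First I would define the map $\Phi\colon\sl n\left(\ZZ\right)\to\{\text{oriented primitive }d\text{-lattices}\}$ by $\Phi(\ga)=\lat_{\ga}^{d}$, equipped with the orientation inherited from the ordering of the columns of $\ga$. That $\lat_{\ga}^{d}$ is primitive is immediate: since the columns of $\ga$ form a $\ZZ$-basis of $\ZZ^{n}$, any element of $V_{\lat_{\ga}^{d}}\cap\ZZ^{n}$ must already be a $\ZZ$-combination of the first $d$ columns. For surjectivity, fix an oriented primitive $d$-lattice $\lat$ and pick a positively oriented basis $v_{1},\ldots,v_{d}$; by the defining property of primitiveness this basis extends to a $\ZZ$-basis $v_{1},\ldots,v_{n}$ of $\ZZ^{n}$, and after possibly flipping the sign of $v_{n}$ we obtain $\ga:=[v_{1}|\cdots|v_{n}]\in\sl n\left(\ZZ\right)$ with $\Phi(\ga)=\lat$.

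Next I would identify the fibers of $\Phi$ with right $\Lat$-orbits. If $\ga,\ga'\in\sl n\left(\ZZ\right)$ both span $\lat$ in their first $d$ columns with the same orientation, the change-of-basis matrix between the first $d$ columns is a unique $A\in\sl d\left(\ZZ\right)$, while the last $n-d$ columns of $\ga'$, expressed in the $\ZZ$-basis formed by the columns of $\ga$, produce a block of the form $\bigl[\begin{smallmatrix}C\\ B\end{smallmatrix}\bigr]$ with $C\in\ZZ^{d,n-d}$ and $B\in\gl{n-d}\left(\ZZ\right)$; comparing determinants forces $\det A\cdot\det B=1$, hence $B\in\sl{n-d}\left(\ZZ\right)$. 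Therefore $\ga'=\ga\cdot h$ with
\[
h=\left[\begin{array}{cc}
A & C\\
0 & B
\end{array}\right]\in\Lat.
\]
Conversely, right multiplication by any element of $\Lat$ preserves $\sl n\left(\ZZ\right)$ and acts on the first $d$ columns by an element of $\sl d\left(\ZZ\right)$, leaving $\Phi(\ga)$ unchanged. Hence $\Phi$ descends to a bijection $\sl n\left(\ZZ\right)/\Lat\;\longleftrightarrow\;\{\text{oriented primitive }d\text{-lattices}\}$.

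Finally, because $\Lat$ is a discrete subgroup of $\sl n\left(\RR\right)$ acting freely on the right, any measurable fundamental domain $\funddom\subset\sl n\left(\RR\right)$ for this action meets each right $\Lat$-orbit in exactly one point; in particular $\funddom\cap\sl n\left(\ZZ\right)$ is a complete set of representatives for $\sl n\left(\ZZ\right)/\Lat$. Defining $\ga_{\lat}$ to be the unique element of $\funddom\cap\Phi^{-1}(\lat)$ yields the claimed bijection, and by construction the first $d$ columns of $\ga_{\lat}$ span $\lat$ with its given orientation. The only non-formal ingredient is the extension-of-basis property used in the surjectivity step, which is the classical characterization of primitivity; the rest is elementary linear algebra over $\ZZ$, with the $\det=1$ bookkeeping done to stay inside the special linear group rather than the general linear group.
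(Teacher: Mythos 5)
Your proof is correct and follows essentially the same approach as the paper's: both associate to an oriented primitive $d$-lattice $\lat$ a completion of one of its bases to a $\ZZ$-basis of $\ZZ^{n}$, observe that this choice is well-defined precisely up to right multiplication by $\Lat$, and then use the fundamental domain to select a unique integral representative. Your version is somewhat more careful than the paper's about the orientation bookkeeping (in particular the sign adjustment ensuring $\ga\in\sl n\left(\ZZ\right)$ rather than merely $\gl n\left(\ZZ\right)$) and about verifying that the fiber of the assignment $\ga\mapsto\lat_{\ga}^{d}$ is exactly a single right $\Lat$-orbit, both of which the paper leaves implicit.
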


\begin{proof}
The direction $\Leftarrow$ is simple: given $\ga\in\funddom\cap\sl n\left(\ZZ\right)$,
its columns span $\ZZ^{n}$ hence by definition its first $d$ columns
span a primitive $d$\textendash lattice. In the opposite direction,
let $\base$ be basis for $\lat$. Since $\lat$ is primitive, $\base$
can be completed to a basis of $\ZZ^{n}$; let $\ga\in\sl n\left(\ZZ\right)$
be a matrix having this basis in its columns, with $\base$ in the
first $d$ columns. The orbit $\ga\cdot\Lat$, whose elements consist
of integral matrices having a basis for $\lat$ in their first $d$
columns, meets $\funddom$ in a single point, $\ga_{\lat}$. 
\end{proof}
Let us construct an explicit fundamental domain for $\Lat$. Let 
\[
\cube:=\text{the unit cube \ensuremath{\left(-1/2,1/2\right]^{d\left(n-d\right)}}}
\]
and, recalling Notation \ref{nota: Fundamental domains for SL(Z)},
set 
\begin{equation}
\funddom:=K^{\prime}\parby{G^{\dprime}}{\groupfund d\times\groupfund{n-d}}A^{\prime}\parby{N^{\prime}}{\cube}.\label{eq: fund dom for disc grp}
\end{equation}
It is easy to see  that $\funddom$ is a fundamental domain for the
right action of $\disgrp$ on $\sl n\left(\RR\right)$. The next goal
is to reduce the proof of Theorem \ref{thm: A counting} into a problem
of counting integral matrices in subsets of $\sl n\left(\RR\right)$,
and specifically of $\funddom$. We begin by defining these subsets. 
\begin{notation}
\label{nota: sets we count in}For $T>0$, $\sphereset\subseteq\gras{d,n}$,
$\groupset\times\groupfacset\subseteq\unilatspace d\times\unilatspace{n-d}$,
$\symset\times\factorset\subseteq\shapespace d\times\shapespace{n-d}$,
$\latset\subseteq\latspace{d,n}$ and $\pairset\subseteq\pairspace{d,n}$,
consider 
\[
\funddom_{T}\brac{\sphereset,\symset,\factorset}=\,\funddom\,\cap\,\left\{ g=k^{\prime}k^{\dprime}p^{\dprime}a^{\prime}n^{\prime}:\begin{matrix}k^{\prime}\in\parby{K^{\prime}}{\sphereset},a^{\prime}\in\parby{A^{\prime}}{\left[0,T\right]},\\
p^{\dprime}\in\parby{P^{\dprime}}{\symset\times\factor{\symset}}
\end{matrix}\right\} =\parby{K^{\prime}}{\sphereset}K^{\dprime}\parby{P^{\dprime}}{\symset\times\factorset}\parby{A^{\prime}}{\left[0,T\right]}\parby{N^{\prime}}{\cube},
\]
\[
\funddom_{T}\brac{\sphereset,\groupset,\groupfacset}=\,\funddom\cap\left\{ g=k^{\prime}g^{\dprime}a^{\prime}n^{\prime}:\begin{matrix}k^{\prime}\in\parby{K^{\prime}}{\sphereset},a^{\prime}\in\parby{A^{\prime}}{\left[0,T\right]},\\
g^{\dprime}\in\parby{G^{\dprime}}{\groupset\times\factor{\groupset}}
\end{matrix}\right\} =\parby{K^{\prime}}{\sphereset}\parby{G^{\dprime}}{\groupset\times\groupfacset}\parby{A^{\prime}}{\left[0,T\right]}\parby{N^{\prime}}{\cube},
\]
\[
\funddom_{T}\brac{\latset,\groupfacset}=\,\funddom\,\cap\,\left\{ g=qa^{\prime}n^{\prime}:q\in\parby{\wc}{\latset\times\factor{\groupset}},a^{\prime}\in\parby{A^{\prime}}{\left[0,T\right]}\right\} =\parby{\brac{\wc_{d}}}{\latset}\parby{\brac{G_{n-d}^{\dprime}}}{\groupfacset}\parby{A^{\prime}}{\left[0,T\right]}\parby{N^{\prime}}{\cube},
\]
and
\[
\funddom_{T}\brac{\pairset}=\,\funddom\,\cap\,\left\{ g=qa^{\prime}n^{\prime}:q\in\parby{\wc}{\pairset},a^{\prime}\in\parby{A^{\prime}}{\left[0,T\right]}\right\} =\parby{\wc}{\pairset}\parby{A^{\prime}}{\left[0,T\right]}\parby{N^{\prime}}{\cube}.
\]
\end{notation}

Now the following is immediate from Proposition \ref{prop: primitive sublattices correspond to integral matrices}
and Proposition \ref{prop: explicit RI coordinates of g}:
\begin{cor}
\label{cor: the sets we should count in}Consider the correspondence
$\lat\leftrightarrow\ga_{\lat}$ between primitive $d$\textendash lattices
and matrices in $\funddom\cap\sl n\left(\ZZ\right)$, and let $T>0$.

\begin{enumerate}
\item For $\sphereset\subseteq\gras{d,n}$ and $\symset\times\factor{\symset}\subseteq\shapespace d\times\shapespace{n-d}$,
\[
\funddom_{T}\brac{\sphereset,\symset,\factorset}\cap\sl n\left(\ZZ\right)=\cbrac{\ga_{\lat}:\covol{\lat}\leq e^{T},V_{\lat}\in\sphereset,\shape{\lat}\in\symset,\shape{\factor{\lat}}\in\factorset};
\]
\item For $\sphereset\subseteq\gras{d,n}$ and $\groupset\times\groupfacset\subseteq\unilatspace d\times\unilatspace{n-d}$,
\begin{align*}
\funddom_{T}\brac{\sphereset,\groupset,\groupfacset}\cap\sl n\left(\ZZ\right) & =\cbrac{\ga_{\lat}:\covol{\lat}\leq e^{T}\text{, }V_{\lat}\in\sphereset,\unisimlat{\lat}\in\groupset,\unisimlat{\factor{\lat}}\in\groupfacset};
\end{align*}
\item For $\latset\subseteq\latspace{d,n}$ and $\groupfacset\subseteq\unilatspace{n-d}$,
\[
\funddom_{T}\brac{\latset,\groupfacset}\cap\sl n\left(\ZZ\right)=\cbrac{\ga_{\lat}:\covol{\lat}\leq e^{T}\text{, }\unilat{\lat}\in\latset,\unisimlat{\factor{\lat}}\in\groupfacset};
\]
\item For $\pairset\subseteq\pairspace{d,n}$, 
\begin{align*}
\funddom_{T}\brac{\pairset}\cap\sl n\left(\ZZ\right) & =\cbrac{\ga_{\lat}:\covol{\lat}\leq e^{T},\unilat{\brac{\lat,\factor{\lat}}}\in\pairset}.
\end{align*}
\end{enumerate}
\end{cor}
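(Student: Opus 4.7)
The plan is to combine Proposition \ref{prop: primitive sublattices correspond to integral matrices}, which supplies the bijection $\lat \leftrightarrow \ga_{\lat}$ onto $\funddom \cap \sl n(\ZZ)$, with Proposition \ref{prop: explicit RI coordinates of g}, which reads off the geometric invariants of $\lat = \lat_{\ga_{\lat}}^{d}$ directly from the RI coordinates of $\ga_{\lat}$. Since $\lat \mapsto \ga_{\lat}$ is a bijection, each of the four claimed identities reduces to checking that the RI-coordinate description of the set $\funddom_{T}(\ldots)$ given in Notation \ref{nota: sets we count in} matches, under the dictionary of Proposition \ref{prop: explicit RI coordinates of g}, the lattice-theoretic conditions appearing on the right-hand side.

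Concretely, I would decompose $\ga_{\lat} = k' k^{\dprime} p^{\dprime} a_{t}' n'$ in RI coordinates (with $g^{\dprime} = k^{\dprime} p^{\dprime}$ the Iwasawa decomposition of the $G^{\dprime}$-component) and verify the conditions blockwise. Clause (ii) of Proposition \ref{prop: explicit RI coordinates of g} gives $e^{t} = \covol{\lat}$, so $a_{t}' \in \parby{A^{\prime}}{[0,T]}$ iff $\covol{\lat} \leq e^{T}$; this handles the covolume condition uniformly across parts (1)--(4). Clause (i) gives $k' = k'_{V_{\lat}}$, so $k' \in \parby{K^{\prime}}{\sphereset}$ iff $V_{\lat} \in \sphereset$, which covers the Grassmannian condition in parts (1) and (2).

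For the remaining clauses, the shape parts (vi) and (vi)$^{\sharp}$ identify the two blocks of $p^{\dprime}$ with the unique representatives of $\shape{\lat}$ in $\symfund d$ and of $\shape{\factor{\lat}}$ in $\symfund{n-d}$; since $\symfund d \times \symfund{n-d}$ is a spread model for $\shapespace d \times \shapespace{n-d}$ by Proposition \ref{prop: spread models that we need}, part (1) follows. Clauses (v) and (v)$^{\sharp}$ do the analogous job for $g^{\dprime} \in \groupfund d \times \groupfund{n-d}$ and the pair $(\unisimlat{\lat},\unisimlat{\factor{\lat}})$, yielding part (2). Finally, clauses (iv), (iv)$^{\sharp}$ and (vii) identify $k' g_{d}^{\dprime}$ and $k' g^{\dprime}$ with representatives of $\unilat{\lat} \in \latspace{d,n}$ and $\unilat{\brac{\lat,\factor{\lat}}} \in \pairspace{d,n}$ inside the spread models $K'(\groupfund d \times \idmat{n-d}) \subset \wc_{d}$ and $K'(\groupfund d \times \groupfund{n-d}) \subset \wc$ supplied by Proposition \ref{prop: spread models that we need}, delivering parts (3) and (4).

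There is no serious obstacle here: the spread-model framework was engineered precisely to make these identifications tautological. The only bookkeeping required is to track which block of the RI decomposition encodes which geometric datum, and to observe that since $\ga_{\lat} \in \funddom$ already places each RI component in its spread model, membership in a sub-spread-model indexed by $\sphereset$, $\symset \times \factorset$, $\groupset \times \groupfacset$, $\latset$, or $\pairset$ is equivalent to the corresponding condition on the equivalence class in the quotient space.
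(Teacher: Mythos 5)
Your proposal is correct and follows precisely the paper's intended route: the paper declares the corollary ``immediate from Propositions \ref{prop: primitive sublattices correspond to integral matrices} and \ref{prop: explicit RI coordinates of g},'' and you have simply spelled out that immediacy, matching each RI block of $\ga_{\lat}\in\funddom_T(\cdots)$ against the dictionary clause by clause and invoking the spread-model identifications of Proposition \ref{prop: spread models that we need}. One small bookkeeping slip: the $\groupfacset$-condition $\unisimlat{\factor{\lat}}\in\groupfacset$ in part (3) is read off from clause $(v)^{\sharp}$ of Proposition \ref{prop: explicit RI coordinates of g}, not $(iv)^{\sharp}$, but this does not affect the substance of the argument.
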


\section{\label{sec: Counting lattices up the cusp}Neglecting lattices up
the cusp: reduction to counting in compact sets}

We are now at the point where we have reduced the proof of Theorem
\ref{thm: A counting} to a problem of counting integral matrices
inside the subsets of $\sl n\left(\RR\right)$ that are defined in
Notation \ref{nota: sets we count in}. The main obstacle in handling
this counting problem, is that these sets are not compact: while their
$A^{\prime}$ component is bounded in $\left[0,T\right]$, their $A^{\dprime}$
component is not bounded from above. This section is devoted to tackling
this issue, by reducing to counting in compact subsets of $\funddom$.
These subsets will be obtained by truncating the $A^{\dprime}$ coordinates
of the sets from Notation \ref{nota: sets we count in}; truncating
the coordinates in $A_{d}^{\dprime}$ will result in bounding the
$d$ component, and truncating the coordinates in $A_{n-d}^{\dprime}$
will result in bounding the $n-d$ component:
\begin{notation}
\label{nota: truncated}For every $\Svec=\brac{S_{1},\ldots,S_{d}}>\underline{0}$,
$\Wvec=\brac{W_{1},\dots,W_{n-d}}>\underline{0}$ and a subset $\Gset\subset G$,
let $\trunc{\Gset}{\Svec,\Wvec}$ denote the subset 
\[
\Gset\cap\cbrac{g:\pi_{A^{\prime\prime}}\left(g\right)=a_{\svec,\wvec},s_{i}\leq S_{i}\,\forall1\leq i\leq d,w_{j}\leq W_{j}\,\forall1\leq j\leq n-d},
\]
where $\pi_{A^{\prime\prime}}$ is the projection to the $A^{\prime\prime}$
component. If $\pi_{A_{n-d}^{\dprime}}\brac{\Gset}=e$, then we denote
$\trunc{\Gset}{\Svec}$, and similarly if $\pi_{A_{d}^{\dprime}}\brac{\Gset}=e$
then we denote $\trunc{\Gset}{\Wvec}$.  
\end{notation}

Recall the fundamental domain $\funddom\subset G$ for $\Lat$ appearing
in Formula \ref{eq: fund dom for disc grp}. Accordingly with Notation
\ref{nota: truncated}, we let 
\[
\trunc{\funddom_{T}}{\Svec,\Wvec}:=K^{\prime}\left[\begin{smallmatrix}\trunc{\groupfund d}{\Svec}\\
 & \trunc{\groupfund{n-d}}{\Wvec}
\end{smallmatrix}\right]A_{T}^{\prime}\parby{N^{\prime}}{\cube}.
\]

\begin{rem}
\label{rem: measure of cusp}It is not hard to see that $\mu\brac{\funddom_{T}-\trunc{\funddom_{T}}{\Svec,\Wvec}}$
is in $O\brac{e^{nT-\min\left\{ S_{\min},W_{\min}\right\} }}$, where
$S_{\min}=\min_{j}S_{j}$ and $W_{\min}=\min_{j}W_{j}$. 
\end{rem}

The goal of this section is to prove the following. 
\begin{prop}
\label{cor: very few SL(n,Z) points up the cusp}Let $\funddom$ be
our fundamental domain for $\Lat$ in $G$, and $\underline{\s}=\left(\s_{1},\ldots,\s_{d-1}\right)$,
$\underline{\om}=\left(\om_{1},\ldots,\om_{n-d-1}\right)$ where $0<\s_{i},\om_{i}<1$
$\forall i$. Then for every $\e>0$
\[
\#\left|\left(\funddom_{T}-\trunc{\funddom_{T}}{\underline{\sigma}T,\underline{\infty}}\right)\cap\sl n\left(\ZZ\right)\right|=O_{\e}\left(e^{T\left(n-\sigma_{\min}+\e\right)}\right),\:\#\left|\left(\funddom_{T}-\trunc{\funddom_{T}}{\underline{\infty},\underline{\omega}T}\right)\cap\sl n\left(\ZZ\right)\right|=O_{\e}\left(e^{T\left(n-\om_{\min}+\e\right)}\right)
\]
and in particular
\[
\#\left|\left(\funddom_{T}-\trunc{\funddom_{T}}{\underline{\sigma}T,\underline{\omega}T}\right)\cap\sl n\left(\ZZ\right)\right|=O_{\e}\left(e^{T\left(n-\min\left\{ \sigma_{\min},\om_{\min}\right\} +\e\right)}\right),
\]
where $\sigma_{\min}=\min\sigma_{i}$, $\omega_{\min}=\min\omega_{i}$.
\end{prop}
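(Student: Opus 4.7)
The plan is to cover the cusp region by strata indexed by which $s_i$ coordinate is unexpectedly large, and count integer points in each stratum by reinterpreting them as flags of primitive lattices. To begin, a union bound gives
$$\funddom_T - \trunc{\funddom_T}{\underline{\sigma}T, \underline{\infty}} \;\subseteq\; \bigcup_{i=1}^{d-1}\;\{g \in \funddom_T : s_i(g) > \sigma_i T\},$$
so it is enough to bound, for each fixed $1 \leq i \leq d-1$, the number of $\gamma = \gamma_\lat \in \sl n(\ZZ) \cap \funddom_T$ with $s_i(\gamma) > \sigma_i T$, and then sum the resulting $d-1$ estimates.

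To interpret the cusp condition lattice-theoretically, I would invoke Proposition \ref{prop: explicit RI coordinates of g}$(iii)$. With $\lat = \lat_\gamma^d$ the primitive $d$-lattice spanned by the first $d$ columns of $\gamma$ and $\lat^i = \lat_\gamma^i$ the sublattice spanned by its first $i$ columns, the identity $\covol{\lat^i} = e^{it/d - s_i/2}$ combined with $e^t = \covol\lat \leq e^T$ and $s_i > \sigma_i T$ yields
$$\covol{\lat^i} \;<\; e^{iT/d - \sigma_i T/2}.$$
Crucially, because the columns of $\gamma$ form a $\ZZ$-basis of $\ZZ^n$, its first $i$ columns extend to such a basis, so $\lat^i$ is itself a primitive $i$-lattice in $\ZZ^n$.

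The counting problem thus reduces to bounding the number of pairs $(\lat', \lat)$ with $\lat' \subset \lat \subset \ZZ^n$, both primitive of ranks $i$ and $d$ respectively, subject to $\covol\lat \leq e^T$ and $\covol{\lat'} \leq e^{iT/d - \sigma_i T/2}$. For fixed $\lat'$, primitive $d$-lattices $\lat \supset \lat'$ correspond bijectively to primitive $(d-i)$-lattices in the factor lattice $\factorg{\lat'}{\ZZ^n}$ (of rank $n-i$), with $\covol\lat = \covol{\lat'} \cdot \covol{\bar\lat}$. Applying Schmidt's asymptotic primitive count once in dimension $n$ for $\lat'$ and once in dimension $n-i$ for $\bar\lat$, then summing dyadically over $\covol{\lat'}$, produces a total of order $e^{T[n-i + i^2/d - i\sigma_i/2]}$ up to $T^{O(1)}$ factors absorbed into $e^{\e T}$. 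A short elementary check using $\sigma_i < 1$ and $1 \leq i \leq d-1$ shows
$$n - i + \frac{i^2}{d} - \frac{i\sigma_i}{2} \;\leq\; n - \sigma_i,$$
so each stratum contributes $O_\e\brac{e^{T(n - \sigma_i + \e)}}$, and summation over $i$ yields the first bound $O_\e\brac{e^{T(n - \sigma_{\min} + \e)}}$. The second bound follows by the symmetric argument applied to the lower-right $(n-d) \times (n-d)$ block, or equivalently via the $\lat \leftrightarrow \perpen\lat$ duality interchanging $d$ and $n-d$; the third bound is immediate from the first two by another union bound.

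The principal technical obstacle will be making Schmidt's counting of primitive sublattices quantitatively effective uniformly in the factor lattice $\factorg{\lat'}{\ZZ^n}$ as $\lat'$ varies, so that no dependence on $\lat'$ beyond its covolume leaks into the error term. Once this uniformity is secured, the dyadic summation and the exponent arithmetic are routine.
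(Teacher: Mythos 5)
The paper's proof reduces the claim, essentially in one step, to a quantitative count that is cited as a black box, namely \cite[Prop.\ 6.4]{HK_gcd}: the number of primitive $d$-lattices $\lat<\ZZ^n$ with $\covol\lat\le e^T$ and $\covol{\lat^{i}}\le e^{T\alpha_i}$ for all $1\le i\le d-1$ is $O_\e\bigl(e^{T(n-\sigma+\e)}\bigr)$, where $\sigma$ aggregates the savings $i/d-\alpha_i$. You instead try to derive this count from scratch via a flag decomposition $\lat'\subset\lat$ and two applications of Schmidt's asymptotics. This is a genuinely different route, and the first two steps (union bound over the single $i$ with $s_i\ge\sigma_iT$, translation of that condition via Proposition~\ref{prop: explicit RI coordinates of g}$(iii)$ into $\covol{\lat^i}\le e^{(i/d-\sigma_i/2)T}$, and primitivity of $\lat^i$) match the paper exactly and are fine.

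The gap is in the second Schmidt count. You assert that the number of primitive $(d-i)$-lattices $\bar\lat$ in $\factorg{\lat'}{\ZZ^n}$ with $\covol{\bar\lat}\le e^T/Y$ is $O\bigl((e^T/Y)^{n-i}\bigr)$, and call the issue of establishing this uniformly in $\lat'$ a technical obstacle. It is worse than that: the statement is \emph{false} as a uniform upper bound. Schmidt's asymptotic $\sim cZ^{m}$ for primitive $k$-sublattices of covolume $\le Z$ inside a unimodular rank-$m$ lattice $\Delta$ is not a uniform upper bound; the number of such sublattices depends on the successive minima of $\Delta$, and grows without bound as $\Delta$ degenerates (e.g.\ a rank-$2$ lattice with $\lambda_1=\epsilon$ contains $\asymp Z/\epsilon$ primitive vectors of length $\le Z$, far exceeding $Z^2$ when $\epsilon\ll 1/Z$). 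In your setting, as $\lat'$ ranges over primitive $i$-lattices, the rescaled factor lattice $\covol{\lat'}^{1/(n-i)}\factorg{\lat'}{\ZZ^n}\in\latspace{n-i}$ ranges over all of $\latspace{n-i}$, including arbitrarily deep into the cusp, precisely in the regime you are trying to control. So the proposed bound cannot be secured as stated; a correct version of this argument would have to stratify by the shape of the factor lattice and incorporate that extra averaging into the dyadic sum — at which point one is essentially re-proving \cite[Prop.\ 6.4]{HK_gcd} by a flag count. A secondary slip: you apply Schmidt's count ``in dimension $n-i$'' without rescaling, ignoring that $\factorg{\lat'}{\ZZ^n}$ has covolume $1/Y$ rather than $1$; the correct heuristic count carries an extra factor $Y^{d-i}$, which alters the exponent (to $n-d\sigma_i/2$ after the dyadic sum) though it happens to still satisfy the needed inequality $\le n-\sigma_i$. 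Your treatment of the $\underline\omega$ half as a symmetric/dual argument agrees in spirit with the paper, which routes the bound on $\covol{(\factor\lat)^j}$ through Lemma~\ref{lem: covolume of sub-lattices of factor and perp} and Proposition~\ref{prop: dual+factor =00003D orthogonal} to a bound on $\covol{(\perpen\lat)^{n-d-j}}$.
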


\begin{proof}
Let $\ga_{\lat}=ka_{\underline{s},\wvec}^{\prime\prime}a_{t}^{\prime}n\in\funddom_{T}\cap\sl n\left(\ZZ\right)$.
In what follows, $\lat^{i}$ is the lattice spanned by the first $i$
columns of $\ga_{\lat}$, $\brac{\factor{\lat}}^{j}<\factor{\lat}$
is the lattice spanned by the first $j$ columns of the $n\times(n-d)$
matrix obtained by projecting the columns of $\ga_{\lat}$ to $\perpen{V_{\lat}}$,
and $\left(\perpen{\lat}\right)^{k}<\perpen{\lat}$ is the lattice
spanned by the first $k$ columns of the matrix that represents $\perpen{\lat}$
inside $\groupfund{n-d}$.

Recall that $\ga_{\lat}\in\funddom_{T}-\funddom_{T}^{\left(\underline{\sigma}T,\underline{\infty}\right)}$
if and only if $\exists i\in\left\{ 1,\ldots,d-1\right\} $ for which
$s_{i}\geq\sigma_{i}T$. This implies (using Proposition \ref{prop: explicit RI coordinates of g}
and the fact that $\lat^{i}$ is integral) that $1\leq\covol{\lat^{i}}=e^{\frac{it}{d}-\frac{\sigma_{i}t}{2}}\leq e^{\brac{\frac{i}{d}-\frac{\sigma_{i}}{2}}T}$.
For the remaining $i\neq i^{\prime}\in\left\{ 1,\ldots,d-1\right\} $,
$1\leq\covol{\lat^{i^{\prime}}}=e^{\frac{it}{d}}\leq e^{\frac{i}{d}T}$. 

By \cite[Prop. 6.4]{HK_gcd}, the number of such possible lattices
$\lat$ is $e^{T\left(n-\sigma+\e\right)}$ where $\sigma=\sum\sigma_{i}$.
Thus, the number of $\sl n\left(\ZZ\right)$ elements $\ga$ in $\funddom_{T}-\funddom_{T}^{\left(\underline{\sigma}T,\underline{\infty}\right)}$
is bounded by 
\begin{align*}
\;\, & \#\left|\bigcup_{\substack{\underline{u}=\left(u_{1},\ldots,u_{d-1}\right)\\
\in\left\{ 0,1\right\} ^{d-1}-\left\{ 0\right\} 
}
}\cbrac{\lat:\forall i,\,\covol{\lat_{v}^{i}}\in\sbrac{1,e^{T\brac{\frac{i}{d}-\frac{\sigma_{i}u_{i}}{2}}}}}\right|\\
= & \sum_{\underline{u}\in\left\{ 0,1\right\} ^{d-1}-\left\{ 0\right\} }O_{\e}\brac{e^{T\brac{n-\sigma+\e}}}=O_{\e}\brac{e^{T\brac{n-\sigma_{\min}+\e}}}
\end{align*}
where $\sigma_{\min}=\min\left\{ \sigma_{i}\right\} $, as $\e$ is
arbitrary.

Now recall that $\ga_{\lat}\in\funddom_{T}-\funddom_{T}^{\left(\underline{\infty},\underline{\omega}T\right)}$
if and only if $\exists j\in\left\{ 1,\ldots,n-d-1\right\} $ for
which $w_{j}\geq\omega_{j}T$. By Lemma \ref{lem: covolume of sub-lattices of factor and perp}
and Proposition \ref{prop: dual+factor =00003D orthogonal}, $\covol{\brac{\factor{\lat}}^{j}}$
is proportional to the quotient $\covol{\brac{\perpen{\lat}}^{n-d-j}}/\covol{\lat}$,
with a positive constant that depends only on $d$ and $n$. Then,
by Proposition \ref{prop: explicit RI coordinates of g} and Lemma
\ref{lem: covolume of sub-lattices of factor and perp},
\[
\widetilde{C}e^{-t}\cdot\covol{\brac{\perpen{\lat}}^{n-d-j}}=\widetilde{C}\cdot\frac{\covol{\brac{\perpen{\lat}}^{n-d-j}}}{\covol{\lat}}\leq\covol{\brac{\factor{\lat}}^{j}}=e^{-\frac{jt}{n-d}-\frac{w_{j}}{2}},
\]
where $\widetilde{C}>0$. Hence, up to an additive constant that becomes
negligible when $t$ is large, $w_{j}\geq\omega_{j}T$ implies
\[
1\leq\covol{\brac{\perpen{\lat}}^{n-d-j}}\leq e^{t-\frac{jt}{n-d}-\frac{w_{j}}{2}}\leq e^{T\brac{1-\frac{j}{n-d}-\frac{\omega_{j}}{2}}}.
\]
For the remaining $j\neq j^{\prime}\in\left\{ 1,\ldots,n-d-1\right\} $,
$1\leq\covol{\lat^{n-d-j^{\prime}}}\leq e^{t\brac{1-\frac{j^{\prime}}{n-d}}}\leq e^{T\brac{1-\frac{j^{\prime}}{n-d}}}$.
By \cite[Cor. 6.4]{HK_gcd}, the number of such possible lattices
$\lat$ is $e^{T\left(n-\omega+\e\right)}$ where $\omega=\sum\om_{i}$.
Thus, by similar considerations, the number $\#\vbrac{\brac{\funddom_{T}-\funddom_{T}^{\left(\underline{\infty},\underline{\omega}T\right)}}\cap\sl n\left(\ZZ\right)}$
is in $O\brac{e^{T\left(n-\omega_{\min}+\e\right)}}$, where $\omega_{\min}=\min\omega_{i}$. 
\end{proof}

\section{\label{sec: Almost  a proof}Almost a proof of the main theorem}

In this section, we prove Theorem \ref{thm: A counting} based on
a proposition for counting lattice points in $\sl n\left(\RR\right)$
that we state here and prove in Section \ref{sec: Technical Proof}.
The statement includes a parameter $\errexp$ of lattices in Lie groups,
that we now define. 

In certain Lie groups, among which algebraic simple non-compact Lie
groups $G$, there exists $p\in\mathbb{N}$ for which the matrix coefficients
$\dbrac{\pi_{G/\Lat}^{0}u,v}$ are in $L^{p+\e}\left(G\right)$ for
every $\e>0$, with $u,v$ lying in a dense subspace of $L_{0}^{2}\left(G/\Lat\right)$
(see \cite[Thm 5.6]{GN_book}). Let $p\left(\Lat\right)$ be the smallest
among these $p$'s, and denote 
\[
m\left(\Lat\right)=\begin{cases}
1 & \text{if \ensuremath{p=2},}\\
2\left\lceil p\left(\Lat\right)/4\right\rceil  & \text{otherwise.}
\end{cases}
\]
Define: 
\[
\errexp\left(\Lat\right)=\frac{1}{2m\left(\Lat\right)\left(1+\dim G\right)}\in\left(0,1\right),
\]

\begin{prop}
\label{thm: Counting with S(T) and W(T)}Let $\Lat<\sl n\left(\RR\right)$
be a lattice with $\errexp=\errexp\left(\Lat\right)$. Set $\lm_{n}=\frac{n^{2}}{2\left(n^{2}-1\right)}$.
If $\pairset\subseteq\pairspace{d,n}$ is a BCS, then the following
holds for every $0<\e<\errexp$: 
\begin{enumerate}
\item For $\Svec=\left(S_{1},\dots,S_{d-1}\right)$ and $\Wvec=\left(W_{1},\dots,W_{n-d-1}\right)$,
$\sumS=\sum S_{i}$ and $\sumW=\sum W_{i}$, and every $T\geq\frac{\sumS+\sumW}{n\lm_{n}\errexp\left(\Lat\right)}+O_{\pairset}(1)$:
\[
\#\brac{\trunc{\funddom_{T}}{\Svec,\Wvec}\brac{\pairset}\cap\Lat}=\frac{\mu\brac{\trunc{\funddom_{T}}{\Svec,\Wvec}\brac{\pairset}}}{\mu\left(\Lat\backslash G\right)}+O_{\pairset,\e}\brac{e^{\left(\sumS+\sumW\right)/\lm_{n}}e^{nT\left(1-\errexp+\e\right)}}.
\]
\item For every $\delta\in\left(0,\errexp-\e\right)$ , $T\geq O_{\pairset}(1)$,
$\Svec\left(T\right)=\left(S_{1}\left(T\right),\dots,S_{d-1}\left(T\right)\right)$
and $\Wvec\left(T\right)=\left(W_{1}\left(T\right),\dots,W_{n-d-1}\left(T\right)\right)$
such that $\sumS\left(T\right)+\sumW\left(T\right)=\sum S_{i}\left(T\right)+\sum W_{i}\left(T\right)\leq n\dl\lm_{n}T+O_{\pairset}(1)$:
\[
\#\brac{\trunc{\funddom_{T}}{\Svec\left(T\right),\Wvec\left(T\right)}\brac{\pairset}\cap\Lat}=\frac{\mu\brac{\trunc{\funddom_{T}}{\Svec\left(T\right),\Wvec\left(T\right)}\brac{\pairset}}}{\mu\left(\Lat\backslash G\right)}+O_{\pairset,\e}\brac{e^{nT\left(1-\errexp+\dl+\e\right)}}.
\]
When only $\Svec$ is bounded the condition on $\sumS\left(T\right)+\sumW\left(T\right)$
becomes $\sumW\left(T\right)\leq n\dl\lm_{n}T+O_{\pairset}(1)$,
and when only $\Wvec$ is bounded the condition becomes $\sumS\left(T\right)\leq n\dl\lm_{n}T+O_{\pairset}(1)$.
\end{enumerate}
\end{prop}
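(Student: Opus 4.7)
The plan is to apply the Gorodnik-Nevo spectral / mean ergodic method for counting lattice points in semisimple Lie groups \cite{GN1}, adapted to the compact truncated family $B := \trunc{\funddom_T}{\Svec,\Wvec}(\pairset)$. Writing the count as $F_\Lat(e)$ for the automorphic periodization $F_\Lat(x) = \sum_{\ga \in \Lat} \chi_B(\ga x)$ on $\Lat\backslash G$, and decomposing $F_\Lat$ orthogonally into its constant component $\mu(B)/\mu(\Lat\backslash G)$ and an $L^2_0(\Lat\backslash G)$-component, the entire task reduces to estimating the $L^2_0$-component pointwise at $x = e$.

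To evaluate at a point, I would smooth: fix a symmetric nonnegative bump $\psi_\rho$ on $G$ with $\int \psi_\rho = 1$, supported in a small $O_\rho$-neighborhood of the identity. Define inner and outer thickenings $B^{\pm \rho}$ such that $\chi_{B^{-\rho}} * \psi_\rho \leq \chi_B \leq \chi_{B^{+\rho}} * \psi_\rho$ as functions on $G$; the pointwise gap between these smoothed majorants and minorants contributes an error bounded by $\mu(B^{+\rho}\setminus B^{-\rho})$. By the BCS hypothesis on $\pairset$ combined with the explicit product structure of $B$ in the RI coordinates $K' K^{\dprime} A^{\dprime} A' N'$, this wedge volume splits into boundary contributions from each factor: the contributions from $K'$, $K^{\dprime}$, $N'$, and the endpoints of $A'$ give terms of order $\rho \cdot e^{nT}$, while the truncation faces $\{s_i = S_i\}$ and $\{w_j = W_j\}$ contribute $\rho \cdot e^{nT} \cdot e^{(\sumS + \sumW)/\lm_n}$. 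The factor $\lm_n = n^2/(2(n^2-1))$ arises from the stretching of an identity neighborhood in $N^{\dprime}$ under the adjoint action of $A^{\dprime}$, which is governed by the largest root of $\sl n(\RR)$.

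Next, apply the quantitative mixing estimate: by the $L^{p+\e}(G)$-integrability of matrix coefficients on $L^2_0(\Lat \backslash G)$ and the definition of $m(\Lat)$, one obtains an operator bound $\|\pi^0_\Lat(\psi_\rho)\|_{\mathrm{op}} \ll \rho^{-n^2/(2m(\Lat))}$ via standard Sobolev-type estimates on $\psi_\rho$, using $1 + \dim G = n^2$. Combined with the Cauchy-Schwarz bound $\|\chi_{B^{\pm\rho}}\|_{L^2(G)}^2 \leq \mu(B^{+\rho}) \ll_\pairset e^{nT}$ and continuity of the smoothed functions, this gives the pointwise estimate
\[
\left| F_\Lat(e) - \frac{\mu(B)}{\mu(\Lat\backslash G)} \right| \ll_\pairset \mu(B^{+\rho}\setminus B^{-\rho}) + \rho^{-n^2/(2m(\Lat))} e^{nT/2}.
\]

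Finally, optimize $\rho$. Balancing $\rho \cdot e^{nT + (\sumS+\sumW)/\lm_n}$ against $\rho^{-n^2/(2m(\Lat))} e^{nT/2}$ and using the identity $\errexp(\Lat) = 1/(2m(\Lat) n^2)$ yields the claimed error $e^{(\sumS+\sumW)/\lm_n} e^{nT(1-\errexp+\e)}$ of Part 1; the hypothesis $T \geq (\sumS + \sumW)/(n\lm_n \errexp) + O_\pairset(1)$ is precisely the regime in which the optimal $\rho$ lies in a valid range. Part 2 follows by the same optimization with the constraint $\sumS(T) + \sumW(T) \leq n\dl \lm_n T$ substituted in, producing an additional $e^{n\dl T}$ factor in the error. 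The main technical obstacle is Step 2: establishing the boundary volume estimate for the non-bi-$K$-invariant truncation faces of $B$, which sit in the direction of the cusp. This requires carefully computing how right convolution by $O_\rho$ propagates through the RI coordinates, and in particular how the $A^{\dprime}$-conjugation stretches identity neighborhoods in $N^{\dprime}$ at exactly the rate $\lm_n$ and not worse.
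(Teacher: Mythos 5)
Your proposal unfolds the Gorodnik--Nevo spectral machinery by hand, whereas the paper deliberately treats it as a black box. The paper's actual proof applies Theorem \ref{thm: GN Counting thm} (the LWR version of the GN counting theorem) directly, so that the only remaining work is to verify the \emph{Lipschitz well-roundedness} of the family $\cbrac{\trunc{\funddom_T}{\Svec,\Wvec}(\pairset)}_T$. This is done via the coordinate map $\roundo\colon G\to K\times A'\times A^{\dprime}\times N^{\dprime}\times N'$: one shows that the image of the truncated set under $\roundo$ is a direct product of three LWR families (quoting three lemmas from the two companion papers), and that $\roundo$-well-roundedness transfers back to $G$ at the cost of a Lipschitz constant $\porsmall_{\pairset} e^{2(\sumS+\sumW)}$ (another companion-paper lemma). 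Feeding this constant and $\mu\sim e^{nT}$ into the GN error bound $C_{\Gset}^{\dim G/(1+\dim G)}\mu^{1-\errexp+\dl}$, with $\dim G=n^2-1$, yields the factor $e^{(\sumS+\sumW)\cdot 2(n^2-1)/n^2}=e^{(\sumS+\sumW)/\lm_n}$; Part 2 is then a purely arithmetic consequence of Part 1, exactly as you describe.

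There is, however, a genuine gap in your sketch. You assert that the boundary volume of the truncation faces already carries the factor $e^{(\sumS+\sumW)/\lm_n}$, and then that a single-parameter balancing of $\rho\, e^{nT+(\sumS+\sumW)/\lm_n}$ against $\rho^{-n^2/(2m)}e^{nT/2}$ returns that factor unchanged. It does not: if the pre-optimization boundary rate is $\rho\, C\, e^{nT}$ and the spectral term is $\rho^{-\a}e^{nT/2}$, the balanced error is $C^{\a/(1+\a)}e^{nT(1-1/(2(1+\a)))}$, so the $C$-factor necessarily appears with a fractional exponent $<1$, and moreover no choice of $\a$ makes $\a/(1+\a)$ and $1/(2(1+\a))$ simultaneously equal to $\dim G/(1+\dim G)$ and $\errexp=1/(2mn^2)$ once $m>1$. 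The full GN argument is an \emph{iterated} mean-ergodic argument (the parameter $m(\Lat)$ tracks how many times the operator is applied) together with a separate admissibility/Lipschitz step; a one-shot smoothing-and-balancing heuristic simply cannot reproduce the stated exponent. The coherent way to recover the claimed bound is to take the true Lipschitz constant of the truncated family to be $e^{2(\sumS+\sumW)}$ (not $e^{(\sumS+\sumW)/\lm_n}$), and let the exponent $1/\lm_n=2(n^2-1)/n^2$ emerge from the $C_\Gset^{\dim G/(1+\dim G)}$ power in the GN theorem. You would also need a real argument for why the truncated set is LWR with that specific Lipschitz constant; the stretching of $N^{\dprime}$-neighborhoods under $A^{\dprime}$-conjugation is only part of the story, and in particular the cusp faces $\cbrac{s_i=S_i}$, $\cbrac{w_j=W_j}$ interact with the $K^{\dprime}$ and $N^{\dprime}$ directions in a way that the paper delegates to the companion papers rather than to a first-principles boundary computation.
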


\begin{proof}[Proof of Theorem \ref{thm: A counting} assuming Proposition \ref{thm: Counting with S(T) and W(T)}]
By Corollary \ref{cor: the sets we should count in}, when setting
$T=\log X$, the quantities we seek to estimate in parts 1,2,3 and
4 of the theorem is in one to one correspondence with the integral
matrices in the following subsets of $\sl n\left(\RR\right)$, respectively:
1) $\funddom_{T}\brac{\sphereset,\symset,\factorset}$, 2) $\funddom_{T}\brac{\sphereset,\groupset,\groupfacset}$,
3) $\funddom_{T}\brac{\latset,\groupfacset}$, or 4) $\funddom_{T}\brac{\pairset}$.
Observe that, according to Section \ref{subsec: RI Haar measure}
on the measures on the RI components, to Proposition \ref{prop: spread models that we need}
on the spread models for $\shapespace i$, $\unilatspace i$, $\latspace{d,n}$
and $\pairspace{d,n}$, to \ref{prop: spread models - properties}(2)
for the agreement of the measures on the lattice spaces and the measures
on their spread models, and to our choice of measure on $\so i\left(\RR\right)$,
indeed the main terms in Theorem \ref{thm: A counting} are the volumes
of these sets, divided by the measure of $\sl n\left(\RR\right)/\sl n\left(\ZZ\right)$
which is $\vol\brac{\unilatspace n}=\prod_{i=2}^{n}\zeta\left(i\right)$. 

First we claim that it is sufficient to prove part \ref{enu: MainThm_restricted pairs}
of the theorem, and then the other parts will follow. Indeed, the
family $\funddom_{T}\brac{\sphereset,\symset,\factorset}$ appearing
in part \ref{enu: MainThm_restricted shape} is a special case of
the family $\funddom_{T}\brac{\sphereset,\groupset,\groupfacset}$
appearing in part \ref{enu: MainThm_restricted G''}, by taking $\groupset,\groupfacset$
to be the lifts of $\symset,\factorset$. According to Lemma \ref{lem: lift to G''}(1),
$\groupset,\groupfacset$ are BCS's when $\symset,\factorset$ are,
and $\text{\ensuremath{\vol\brac{\groupset}}}=\vol\brac{\symset}\halfK d$
(similarly for $\groupfacset$ and $n-d$). In particular, the main
term provided in part \ref{enu: MainThm_restricted shape} of the
theorem for $\sphereset,\symset,\factorset$ coincides with the one
provided in part \ref{enu: MainThm_restricted G''} for $\sphereset$
and the lifts of $\symset,\factorset$. Hence, part \ref{enu: MainThm_restricted shape}
is a consequence of part \ref{enu: MainThm_restricted G''}. Similarly,
due to Lemma \ref{lem: lift to G''}(3) and (4), parts \ref{enu: MainThm_restricted G''}
and \ref{enu: MainThm_restricted components of pairs} respectively
are a consequence of part \ref{enu: MainThm_restricted pairs}. We
conclude that it is sufficient to prove part \ref{enu: MainThm_restricted pairs}
of the theorem. 

Let $0<\e<\errexp_{n}$ and $0<\dl<\errexp_{n}-\e$. Suppose first
that the $d$ component of $\pairset\subseteq\pairspace{d,n}$ is
not bounded, and $n-d$ component is bounded. Recall $\lm_{n}=\frac{n^{2}}{2\left(n^{2}-1\right)}$
and let $\underline{\sigma}_{d}=\brac{\frac{\dl n\lm_{n}}{d-1}-\e}\cdot\Onevec_{d-1}$.
Note that the sum of the coordinates of $\underline{\sigma}_{d}$
is $\dl n\lm_{n}-(d-1)\e$, which, for $T$ large enough, is smaller
than $\dl n\lm_{n}+O_{\pairset}(1)/T$ (aiming to satisfy the condition
in  Proposition \ref{thm: Counting with S(T) and W(T)}). Since the
$n-d$ component of $\pairset$ is bounded, there exists $\Wvec=\Wvec_{\pairset}$
such that $\funddom_{T}\brac{\pairset}=\trunc{\funddom_{T}}{\infty,\Wvec_{\pairset}}\brac{\pairset}$.
Using this, along with Propositions \ref{cor: very few SL(n,Z) points up the cusp}
and \ref{thm: Counting with S(T) and W(T)}, we have
\[
\#\brac{\sl n\left(\ZZ\right)\cap\funddom_{T}\brac{\pairset}}=\#\brac{\sl n\left(\ZZ\right)\cap\trunc{\funddom_{T}}{\infty,\Wvec_{\pairset}}\brac{\pairset}}
\]
\[
=\#\brac{\sl n\left(\ZZ\right)\cap\trunc{\funddom_{T}}{\underline{\sigma}_{d}T,\Wvec_{\pairset}}\brac{\pairset}}+O_{\e}\brac{e^{nT\left(1-\frac{\dl\lm_{n}}{d-1}+\e\right)}}
\]
\[
=\mu\brac{\trunc{\funddom_{T}}{\underline{\sigma}_{d}T,\Wvec_{\pairset}}\brac{\pairset}}+O_{\pairset,\e}\brac{e^{nT\left(1-\errexp_{n}+\dl+\e\right)}}+O_{\e}\brac{e^{nT\left(1-\frac{\dl\lm_{n}}{d-1}+\e\right)}}
\]
According to Remark \ref{rem: measure of cusp}, we may replace the
measure of $\trunc{\brac{\funddom}}{\underline{\sigma}_{d}T,\Wvec_{\pairset}}_{T}\brac{\pairset}$
in the expression above with the measure of $\funddom{}_{T}\brac{\pairset}$,
since the difference in measures is $O\brac{e^{nT\left(1-\frac{\dl\lm_{n}}{d-1}\right)}}$,
which is swallowed in the error term. Then we choose $\dl$ that will
balance the two error terms above, i.e.\ $\dl$ that satisfies: $1-\errexp_{n}+\dl=1-\frac{\dl\lm_{n}}{d-1}$;
this means $\dl=\errexp_{n}\cdot\brac{1-\frac{\lm_{n}}{d-1+\lm_{n}}}=\errexp_{n}\cdot\brac{1-\frac{n^{2}}{2\left(d-1\right)\left(n^{2}-1\right)+n^{2}}}$.
We conclude that in the case where only the $d$ component of $\pairset$
is bounded, then $\#\brac{\sl n\left(\ZZ\right)\cap\funddom_{T}\brac{\pairset}}=\mu\brac{\funddom_{T}\brac{\pairset}}+O_{\pairset,\e}\brac{e^{nT\brac{1-\frac{\errexp_{n}n^{2}}{2\left(d-1\right)\left(n^{2}-1\right)+n^{2}}+\e}}}$.
Similarly, when only the $n-d$ component is bounded, we define $\underline{\om}_{n-d}:=\frac{\dl n\lm_{n}}{n-d-1}\cdot\Onevec_{n-d-1}$
and the error term changes to $O_{\pairset,\e}\brac{e^{nT\brac{1-\frac{\errexp_{n}n^{2}}{2\left(n-d-1\right)\left(n^{2}-1\right)+n^{2}}+\e}}}$;
when both $d$ and $n-d$ components are non bounded, the error term
changes to $O_{\pairset,\e}\brac{e^{nT\brac{1-\frac{\errexp_{n}n^{2}}{2\left(\max\left\{ d,n-d\right\} -1\right)\left(n^{2}-1\right)+n^{2}}+\e}}}$.
When both $d$ and $n-d$ components are bounded, we use only (the
first part of) Proposition \ref{thm: Counting with S(T) and W(T)},
and obtain an error term of $O_{\pairset,\e}\brac{e^{nT\left(1-\errexp+\e\right)}}$.
\end{proof}

\section{\label{sec: Technical Proof}Proof of technical counting proposition}

the goal of this section is to prove Proposition \ref{thm: Counting with S(T) and W(T)}.
We need the following from \cite{GN1}:
\begin{defn}
\label{def: well roundedness}Let $G$ be a Lie group with a Borel
measure $\mu$, and let $\cbrac{\nbhd{\e}{}}_{\e>0}$ be a family
of identity neighborhoods in $G$. Assume $\left\{ \Gset_{T}\right\} _{T>0}\subset G$
is a family of measurable domains and denote 
\[
\Gset_{T}^{+}\left(\e\right):=\nbhd{\e}{}\Gset_{T}\nbhd{\e}{}=\bigcup_{u,v\in\nbhd{\e}{}}u\,\Gset_{T}\,v,
\]
\[
\Gset_{T}^{-}\left(\e\right):=\bigcap_{u,v\in\nbhd{\e}{}}u\,\Gset_{T}\,v.
\]
The family $\left\{ \Gset_{T}\right\} $ is \emph{Lipschitz well-rounded
(LWR)} with (positive) parameters $\brac{\mathcal{C},T_{0}}$ if for
every $0<\e<1/\mathcal{C}$ and $T>T_{0}$: 
\begin{equation}
\mu\left(\Gset_{T}^{+}\left(\e\right)\right)\leq\left(1+\mathcal{C}\e\right)\:\mu\left(\Gset_{T}^{-}\left(\e\right)\right).\label{eq:LWReq}
\end{equation}
The parameter $\mathcal{C}$ is called the \emph{Lipschitz constant}
of the family $\left\{ \Gset_{T}\right\} $. 
\end{defn}

The definition above allows any family $\left\{ \nbhd{\e}{}\right\} _{\e>0}$
of identity neighborhoods; in this paper we shall restrict to the
following:
\begin{assumption}
\label{assu: Our O_e }We will assume that $\nbhd{\e}G=\exp\left(\ball{\e}\right)$,
where $B_{\e}$ is an origin-centered $\e$-ball inside the Lie algebra
of $G$, and $\exp$ is the Lie exponent. 
\end{assumption}

\begin{rem}
\label{rem: LWR set}We allow the case of a constant family $\left\{ \Gset_{T}\right\} =\Gset$:
we say that $\Gset$ is a Lipschitz well rounded \emph{set} (as apposed
to a Lipschitz well rounded \emph{family}) with parameters $\brac{\mathcal{C},\e_{0}}$
if  $\mu\brac{\Gset^{+}\left(\e\right)}\leq\brac{1+\mathcal{C}\e}\:\mu\brac{\Gset^{-}\left(\e\right)}$
for every $0<\e<\e_{0}$. It is proved in \cite[Prop. 3.5]{HK_WellRoundedness}
that if a set $\Gset$ is BCS and bounded, then it is LWR. 
\end{rem}

\begin{thm}[{\cite[Theorems 1.9, 4.5, and Remark 1.10]{GN1}}]
\label{thm: GN Counting thm}Let $G$ be an algebraic simple Lie
group with Haar measure $\mu$, and let $\Lat<G$ be a lattice. Assume
that $\{\Gset_{T}\}\subset G$ is a family of finite-measure domains
which satisfy $\mu(\Gset_{T})\to\infty$ as $T\to\infty$. If the
family $\{\Gset_{T}\}$ is Lipschitz well-rounded with parameters
$(C_{\Gset},T_{0})$, then $\exists T_{1}>0$ such that for every
$\delta>0$ and $T>T_{1}$:
\[
\#\brac{\Gset_{T}\cap\Lat}-\mu\brac{\Gset_{T}}/\mu\left(G/\Lat\right)=O_{G,\Lat,\delta}\brac{C_{\Gset}^{\frac{\dim G}{1+\dim G}}\cdot\mu\brac{\Gset_{T}}^{1-\errexp\left(\Lat\right)+\delta}},
\]
where $\mu\left(G/\Lat\right)$ is the measure of a fundamental domain
of $\Lat$ in $G$ and $\errexp\left(\Lat\right)\geq\frac{1}{2\left(1+\dim G\right)}$
is a parameter that depends on the rate of decay of the matrix coefficients
of the $G$-representation on $L_{0}^{2}\left(G/\Lat\right)$. The
parameter $T_{1}$ is such that $T_{1}\geq T_{0}$ and for every $T\geq T_{1}$
\begin{equation}
C_{\Gset}^{\frac{\dim G}{1+\dim G}}=O_{G,\Lat}\brac{\mu\left(\Gset_{T}\right)^{\errexp\left(\Lat\right)}}.\label{eq: def of T_1 in GN thm}
\end{equation}
\end{thm}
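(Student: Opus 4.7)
The plan is to derive both parts of Proposition \ref{thm: Counting with S(T) and W(T)} from a single application of the Gorodnik--Nevo counting theorem (Theorem \ref{thm: GN Counting thm}) to the family
\[
\Gset_T := \trunc{\funddom_T}{\Svec,\Wvec}\brac{\pairset}\subset G=\sl n(\RR).
\]
Once I have (i) an estimate for $\mu(\Gset_T)$, (ii) a bound on the Lipschitz constant $C_\Gset$ of the family, and (iii) verification of the threshold (\ref{eq: def of T_1 in GN thm}), the stated counting estimates will follow by direct substitution into the conclusion of Theorem \ref{thm: GN Counting thm}.

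For (i), the Refined Iwasawa decomposition of Haar measure recorded in Section \ref{subsec: RI Haar measure} realizes $\Gset_T$ as a product of $T$-independent bounded pieces (namely $\parby{\wc}{\pairset}$ intersected with the $A^{\dprime}$-truncation, together with $\parby{N^{\prime}}{\cube}$) and the single $T$-dependent factor $A'_{[0,T]}$, which has $\mu_{A'}$-measure $(e^{nT}-1)/n$. Hence
\[
\mu(\Gset_T)= c(\pairset,\Svec,\Wvec)\cdot\frac{e^{nT}}{n}+O(1),
\]
where $c(\pairset,\Svec,\Wvec)$ is finite because the $A^{\dprime}$-truncation turns the $\wc$-factor into a bounded BCS; in particular $\mu(\Gset_T)=O_\pairset(e^{nT})$.

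Step (ii) is the crux. Because $\Gset_T$ is a bounded BCS of $G$, \cite[Prop.~3.5]{HK_WellRoundedness} (see Remark \ref{rem: LWR set}) guarantees that it is LWR; the delicate part is tracking the growth of $C_\Gset$ as $\Svec,\Wvec$ grow. Thickening $\Gset_T$ on both sides by $\nbhd{\e}{}=\exp(\ball\e)$ (Assumption \ref{assu: Our O_e }) propagates through the RI-decomposition as conjugations by elements of $A=A^{\dprime}A'$ acting on the $N^{\dprime}$- and $N^{\prime}$-components; for each root $\alpha$, the corresponding coordinate of $N$ is scaled by $e^{\alpha(H)}$ with $H\in\mathfrak{a}^{\dprime}$ constrained by $\svec\leq\Svec$, $\wvec\leq\Wvec$, so the worst case scaling is $e^{O(\sumS+\sumW)}$, realized on the dominant simple roots. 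Summing the contributions from left and right thickenings over the root system yields
\[
C_\Gset = O_\pairset\brac{e^{2(\sumS+\sumW)}},
\]
the factor $2$ reflecting the symmetric left/right contribution. This root-level bookkeeping, done uniformly in $T$ by adapting the product-LWR lemmas of \cite{HK_WellRoundedness}, is the main technical obstacle.

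Combining (i)--(ii), the threshold (\ref{eq: def of T_1 in GN thm}) becomes $e^{2(\sumS+\sumW)(n^{2}-1)/n^{2}}=O(e^{nT\errexp})$, and since $1/\lm_n=2(n^{2}-1)/n^{2}$ this is precisely $T\geq (\sumS+\sumW)/(n\lm_n\errexp)+O_\pairset(1)$, matching the hypothesis of Part~1. Theorem \ref{thm: GN Counting thm} with $\dim G=n^{2}-1$ then gives
\[
\#(\Gset_T\cap\Lat)=\mu(\Gset_T)/\mu(\Lat\backslash G)+O\brac{C_\Gset^{(n^{2}-1)/n^{2}}\mu(\Gset_T)^{1-\errexp+\e}},
\]
and substituting the estimates for $C_\Gset$ and $\mu(\Gset_T)$ collapses the error to $O_{\pairset,\e}\brac{e^{(\sumS+\sumW)/\lm_n}e^{nT(1-\errexp+\e)}}$, which is Part~1. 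For Part~2, the assumption $\sumS(T)+\sumW(T)\leq n\dl\lm_n T+O_\pairset(1)$ turns $e^{(\sumS(T)+\sumW(T))/\lm_n}$ into $O(e^{n\dl T})$, yielding the error $O(e^{nT(1-\errexp+\dl+\e)})$. In the special cases where $\pairset$ already has the $d$- or $(n-d)$-component bounded, the corresponding factor of $A^{\dprime}$ is bounded independently of $T$, so only one of $\sumS,\sumW$ enters the Lipschitz-constant and measure estimates, producing the one-sided constraints on $\sumS(T)+\sumW(T)$ stated in the second case of the proposition.
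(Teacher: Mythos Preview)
Your proposal is essentially the paper's own argument: note first that Theorem~\ref{thm: GN Counting thm} is quoted from \cite{GN1} without proof, and the proof that follows it in the paper is the proof of Proposition~\ref{thm: Counting with S(T) and W(T)}, which is exactly what you wrote. The overall scheme---compute $\mu(\Gset_T)\asymp e^{nT}$, bound the Lipschitz constant by $e^{2(\sumS+\sumW)}$, substitute into Theorem~\ref{thm: GN Counting thm}, and then specialize to $\Svec(T),\Wvec(T)$---matches the paper step for step, including the derivation of the threshold on $T$ from~\eqref{eq: def of T_1 in GN thm} and the handling of Part~2.

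The one place where your route diverges slightly is Step~(ii), the LWR bound. You argue directly in $G$ by tracking how conjugation by $A^{\dprime}$ scales the root directions of $N$. The paper instead passes through the diffeomorphism $\roundo:G\to K\times A^{\prime}\times A^{\dprime}\times N^{\dprime}\times N^{\prime}$, verifies LWR for $\roundo(\Gset_T)$ as a product of three independent factors (citing \cite[Lemma~11.1]{HK_gcd}, \cite[Prop.~9.6]{HK_gcd}, \cite[Prop.~3.5]{HK_WellRoundedness}), and then invokes \cite[Cor.~4.3]{HK_WellRoundedness} and \cite[Cor.~10.5]{HK_gcd} to transfer LWR back to $G$ with the stated Lipschitz constant. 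This last transfer step is precisely what absorbs the non-commutativity of the RI factors that your sketch glosses over; your heuristic ``thickening propagates as conjugation by $A$'' is correct in spirit but hides the work needed to control the cross-terms when $\nbhd{\e}{}$ is multiplied on both sides of a product $k'g^{\dprime}a'n'$. The paper's modular route buys a clean citation for that step, whereas yours would require reproducing the content of those corollaries inline.
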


\begin{proof}[Proof of Proposition \ref{thm: Counting with S(T) and W(T)}]
To prove part 1 using Theorem \ref{thm: GN Counting thm}, we must
show that the family $\cbrac{\trunc{\funddom}{\left(\Svec,\Wvec\right)}_{T}\brac{\pairset}}_{T>0}$
is Lipschitz well rounded. Define the map
\[
\begin{array}{cccc}
\roundo: & \sl n\left(\RR\right) & \to & K\times A^{\prime}\times A^{\dprime}\times N^{\dprime}\times N^{\prime}\\
 & g=ka^{\prime}a^{\dprime}n^{\dprime}n^{\prime} & \mapsto & \left(k,a^{\prime},a^{\dprime},n^{\dprime},n^{\prime}\right)
\end{array}.
\]
Showing that the image $\cbrac{\roundo\brac{\trunc{\funddom}{\left(\Svec,\Wvec\right)}_{T}\brac{\pairset}}}_{T>0}$
under $\roundo$ is Lipschitz well rounded with Lipschitz constant
and $T_{0}$ that do not depend on $\Svec,\Wvec$, would imply, according
to Corollaries 4.3 in \cite{HK_WellRoundedness} and 10.5 in \cite{HK_gcd},
that $\trunc{\funddom}{\left(\Svec,\Wvec\right)}_{T}\brac{\pairset}$
is LWR with Lipschits constant that is $\prec_{\pairset}e^{2\left(\sumS+\sumW\right)}$
and $T_{0}$ that does not depend on $\Svec,\Wvec$. Now, 
\[
\roundo\brac{\trunc{\funddom}{\Svec,\Wvec}_{T}\brac{\pairset}}=\roundo\brac{\parby{\wc}{\trunc{\pairset}{\Svec,\Wvec}}}\times A_{\left[0,T\right]}^{\prime}\times\parby{N^{\prime}}{\cube}
\]
with $\roundo\brac{\parby{\wc}{\trunc{\pairset}{\Svec,\Wvec}}}\subset K\times A^{\dprime}\times N^{\dprime}$.
Each of the three factors in the above direct product is LWR: $\roundo\brac{\parby{\wc}{\trunc{\pairset}{\Svec,\Wvec}}}$
is LWR independently of $\Svec,\Wvec$ by \cite[Lemma 11.1]{HK_gcd}\footnote{Actually it was proved for $d=n-1$, but the proof is valid for any
$1<d<n$.}, $A_{\left[0,T\right]}^{\prime}$ is LWR by \cite[Proposition 9.6]{HK_gcd}
and $\parby{N^{\prime}}{\cube}$ is LWR by \cite[Proposition 3.5]{HK_WellRoundedness},
combined with the fact that $\cube$ is a bounded BCS. Since a direct
product of LWR families is itself LWR, we conclude that $\roundo\brac{\trunc{\funddom}{\left(\Svec,\Wvec\right)}_{T}\brac{\pairset}}$
is LWR, and therefore $\trunc{\funddom}{\left(\Svec,\Wvec\right)}_{T}\brac{\pairset}$
is. The error term in part 1 of the proposition is obtained from Theorem
\ref{thm: GN Counting thm}, while recalling that the volume of $\trunc{\funddom}{\left(\Svec,\Wvec\right)}_{T}\brac{\pairset}$
in $\sl n\brac{\RR}$ is of order $e^{nT}$, and that the Lipschitz
constant of this family is $\prec_{\pairset}e^{2\left(\sumS+\sumW\right)}$.
It remains to explain the lower bound on $T$, but this is obtained
from substituting the LWR parameters of $\trunc{\funddom}{\left(\Svec,\Wvec\right)}_{T}\brac{\pairset}$
into Equation \ref{eq: def of T_1 in GN thm}. 

For the proof of the second part, let us compute a bound on $\sumS,\sumW$
for which the error term established in part 1 of the proposition
remains smaller than the main term. Namely, that there exists $\ga\in\left(0,1\right)$
for which
\[
\brac{\sumS+\sumW}/\lm_{n}+\left(1-\errexp\left(\Lat\right)+\e\right)\cdot nT=\Lat\cdot nT.
\]
If $\delta$ denotes the number $\ga+\errexp\left(\Lat\right)-\e-1$,
we have that $\ga=\delta+1+\e-\errexp\left(\Lat\right)$. Then $\ga<1$
if and only if $\dl<\errexp\left(\Lat\right)-\e$, where $\errexp\left(\Lat\right)-\e$
is positive since $\errexp\left(\Lat\right)>\e$. We conclude that
for $0<\dl<\errexp\left(\Lat\right)-\e$ and $\sumS+\sumW=\sumS\left(T\right)+\sumW\left(T\right)<\dl\lm_{n}nT$,
the counting applies with an error term of order $e^{\ga nT}=e^{nT\left(1-\errexp+\dl+\e\right)}$.
As for the lower bound $T_{1}$ on $T$, in part 1 we got $\sumS+\sumW\leq n\lm_{n}\errexp\left(\Lat\right)T+O_{\pairset}\left(1\right)$;
so, combining both bounds on $\sumS+\sumW$ we get 
\[
\sumS+\sumW\leq\min\left\{ n\lm_{n}\dl T,\:n\lm_{n}\errexp\left(\Lat\right)T\right\} +O_{\latset}\left(1\right)=n\lm_{n}\dl T+O_{\pairset}\left(1\right)
\]
for $T$ large enough and $\delta\in\left(0,\errexp\left(\Lat\right)-\e\right)$.
This completes the proof. 
\end{proof}
\newpage

\appendix

\part*{Appendix: Dual lattices, factor Lattices}

This appendix is completely independent from the paper. It aims to
be a source for facts that are not necessarily unknown or hard to
prove, but can be hard to find in the literature. For example, while
the concept of dual lattices (sometimes referred to as \emph{polar}
lattices) is well known \cite{Cassels71}, and we have also found
the notes \cite{Regev_Lattices} very useful), the concept of factor
lattices, a term coined by Schmidt in \cite{Schmidt_68}, is much
less known, and in fact we have not found any reference for it. Another
aim of this appendix is to add the differential perspective of smooth
maps between spaces of lattices. For example, we will see that the
map sending a $d$-lattice to its dual is an auto-diffeomorphism of
the space of $d$-lattices inside $\RR^{n}$. 

In what follows, if the columns of a matrix $\base=\base_{n\times d}$
span a $d$-lattice $\lat<\RR^{n}$ (resp.\ a subspace $V<\RR^{n}$),
we say that $\base$ is a basis for the lattice $\lat$ (resp. \
the subspace $V$). Recall that the covolume of a $d$-lattice $\lat$,
which is the volume of the fundamental parallelepiped for $\lat$
in the linear subspace that it spans, equals $\brac{\vbrac{\det\brac{\base^{\transpose}\base}}}^{1/2}$
where $\base$ is (any) a basis for $\lat$. We will use an underscore
to denote that an object is being spanned by a set, so that $\lat_{\base}$
is the lattice spanned (over $\ZZ$) by $\base$, $V_{\lat}$ is the
linear space spanned (over $\RR$) by $\lat$, etc. 

\section{\label{sec: Dual Lattices}Dual lattices}

Given a $d$-lattice $\lat<\RR^{n}$, we define the \emph{dual lattice}
of $\lat$ as 
\[
\dual{\lat}=\left\{ y\in\sp{\RR}{\lat}:\forall x\in\lat,\dbrac{x,y}\in\ZZ\right\} .
\]
Note that it is contained in $V_{\lat}$, and that a priori, it is
unclear that $\dual{\lat}$ is indeed a lattice. 
\begin{prop}
\label{prop: dual lattice basis}The matrix $\based_{n\times d}:=\base\brac{\base^{\transpose}\base}^{-1}$
is basis for $\dual{\lat}$. 
\end{prop}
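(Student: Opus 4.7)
The plan is to prove the two inclusions $\lat_\based \subseteq \dual{\lat}$ and $\dual{\lat} \subseteq \lat_\based$ separately, with the entire argument driven by the single algebraic identity
\[
\base^{\transpose}\based = \base^{\transpose}\base(\base^{\transpose}\base)^{-1} = \idmat d,
\]
which is exactly the defining relation of a dual basis. First, I would verify the matrix $\based$ is well-defined and has the structural properties a basis must have: since the columns of $\base$ are linearly independent (they span the $d$-lattice $\lat$), the Gram matrix $\base^{\transpose}\base$ is positive definite, hence invertible, and therefore $\based = \base(\base^{\transpose}\base)^{-1}$ has $d$ linearly independent columns lying in $V_\lat = \sp{\RR}{\base}$.

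For the inclusion $\lat_\based \subseteq \dual{\lat}$, any $x \in \lat$ can be written as $x = \base m$ with $m \in \ZZ^d$, and any $y \in \lat_\based$ as $y = \based m'$ with $m' \in \ZZ^d$. Then
\[
\dbrac{x,y} = m^{\transpose}\base^{\transpose}\based m' = m^{\transpose} m' \in \ZZ,
\]
using the identity above. Since $y \in V_\lat$ by construction, this shows $y \in \dual{\lat}$.

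For the reverse inclusion, let $y \in \dual{\lat}$. By definition $y \in V_\lat = V_\based$, so $y = \based m'$ for a unique $m' \in \RR^d$; the task is to show $m' \in \ZZ^d$. Plugging in $x = \base e_j$ (the $j$-th column of $\base$, which lies in $\lat$) into the condition $\dbrac{x,y}\in \ZZ$ gives
\[
\dbrac{\base e_j, \based m'} = e_j^{\transpose}\base^{\transpose}\based m' = e_j^{\transpose} m' = m'_j \in \ZZ
\]
for every $1\leq j\leq d$, so $m' \in \ZZ^d$ and $y \in \lat_\based$.

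There is no real obstacle here — the proposition is essentially a reformulation of the fact that $\based$ is the unique matrix with columns in $V_\lat$ satisfying $\base^{\transpose}\based = \idmat d$, which is the standard dual-basis identity. The only point that needs a word of justification is the invertibility of $\base^{\transpose}\base$, which is immediate from $\base$ having full column rank. In particular, $\dual{\lat}$ is indeed a lattice of rank $d$ inside $V_\lat$, as its basis $\based$ exhibits.
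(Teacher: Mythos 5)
Your proof is correct and follows essentially the same approach as the paper's: both arguments rest on the biorthogonality relation $\base^{\transpose}\based = \idmat d$ (which the paper isolates into Lemma~\ref{lem: dual basis properties}, while you invoke it inline in matrix form) and then prove the two inclusions by pairing generic elements against basis columns. The only cosmetic difference is that you work with column vectors $m, m' \in \ZZ^d$ throughout rather than expanding into sums $\sum \a_i b_i$, $\sum \a_i d_i$; the content is identical.
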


This proposition motivates the notation $\based=\dual{\base}$ for
$\based$ as above, as well as the name \emph{the dual basis} of $\base$.
The proof will make use of the following lemma:
\begin{lem}
\label{lem: dual basis properties}$\base$ and $\based$ span the
same $d$-dimensional linear space in $\RR^{n}$, and their columns
satisfy the orthonormality relation
\[
\left\langle b_{i},d_{j}\right\rangle =\begin{cases}
1 & i=j\\
0 & i\neq j
\end{cases}
\]
where $\brac{b_{i}}_{i=1}^{d}$ are the columns of $\base$ and $\brac{d_{i}}_{i=1}^{d}$
are the columns of $\based$.
\end{lem}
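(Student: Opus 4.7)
The plan is to prove both claims by direct matrix computation, using only the definitions. The key observation is that $\based = \base(\base^\transpose\base)^{-1}$ is obtained from $\base$ by right-multiplication by the invertible $d\times d$ matrix $M := (\base^\transpose\base)^{-1}$ (invertibility follows because $\base$ has linearly independent columns, so $\base^\transpose\base$ is a Gram matrix of a basis, hence positive definite).

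For the first assertion, since each column of $\based$ is a $\RR$-linear combination of the columns of $\base$, we have $\sp{\RR}{\based}\subseteq\sp{\RR}{\base}$; and since $M$ is invertible, $\base = \based M^{-1}$ gives the reverse inclusion, so the two matrices span the same $d$-dimensional subspace of $\RR^n$. For the second assertion, I would compute the $d\times d$ Gram-type product
\[
\base^\transpose\based \;=\; \base^\transpose\base\,(\base^\transpose\base)^{-1} \;=\; \idmat d.
\]
Since the $(i,j)$-entry of $\base^\transpose\based$ is precisely $\dbrac{b_i,d_j}$, this yields the claimed orthonormality relation $\dbrac{b_i,d_j}=\delta_{ij}$.

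I do not anticipate any real obstacle here; the lemma is pure linear algebra and the computation $\base^\transpose\based = \idmat d$ does all the work. The only thing worth being careful about is justifying the invertibility of $\base^\transpose\base$, which (as noted above) follows from the fact that $\base$ has rank $d$ since its columns form a basis of a $d$-lattice. This lemma is then exactly what is needed to prove Proposition \ref{prop: dual lattice basis}, since (1) places $\based$ inside $V_\lat$ and (2) shows that its integer span lies in $\dual{\lat}$, with minimality checked by another short determinant computation.
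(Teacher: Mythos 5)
Your proof is correct and follows essentially the same route as the paper's: observe that $\based=\base M$ with $M=(\base^\transpose\base)^{-1}\in\gl d(\RR)$ to get equality of spans, then compute $\base^\transpose\based=\idmat d$ for the orthonormality relation. You simply spell out the invertibility of $\base^\transpose\base$ and the entrywise interpretation of $\base^\transpose\based$, which the paper leaves implicit.
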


\begin{proof}
The matrix $\based$ is of the form $\base g$ where $g\in\gl d\left(\RR\right)$,
so $\base$ and $\based$ span the same linear space in $\RR^{n}$.
The orthonormality relation follows from the fact that $\base^{\transpose}\based=\idmat d$. 
\end{proof}
\begin{proof}[Proof of Proposition \ref{prop: dual lattice basis}]
In order to show $\lat_{\based}=\dual{\lat_{\base}}$, we will prove
two inclusions. For $\lat_{\based}\supseteq\dual{\lat_{\base}}$,
we first note that by Lemma \ref{lem: dual basis properties}, $V_{\based}\supseteq\dual{\lat_{\base}}$.
As a result, an element $y\in\dual{\lat_{\base}}$ is of the form
$y=\sum_{i=1}^{d}\a_{i}d_{i}$, where $\brac{d_{i}}_{i=1}^{d}$ are
the columns of $\based$. Let $\brac{b_{i}}_{i=1}^{d}$ be the columns
of $\base$. It remains to show that $\a_{j}\in\ZZ$ for every $j$,
which is the case since 
\[
\a_{j}=\sum_{i=1}^{d}\a_{i}\dl_{i,j}=\sum_{i=1}^{d}\a_{i}\left\langle b_{i},d_{j}\right\rangle =\dbrac{b_{i},\sum_{i=1}^{d}\a_{i}d_{j}}=\left\langle b_{i},y\right\rangle \in\ZZ,
\]
where in the second equality we applied Lemma \ref{lem: dual basis properties},
and in the final inclusion we used $b_{i}\in\lat_{\base}$ and $y\in\dual{\lat_{\base}}$. 

For the reverse inclusion $\lat_{\based}\subseteq\dual{\lat_{\base}}$,
it is sufficient to check that $\based\subseteq\dual{\lat_{\base}}$,
namely that $\left\langle x,d_{j}\right\rangle \in\ZZ$ for every
$x\in\lat_{\base}$ and $j=1,\ldots,d$. Indeed, write $x=\sum_{i=1}^{d}\a_{i}b_{i}$
where $\a_{i}\in\ZZ$. Then again by Lemma \ref{lem: dual basis properties},
\[
\left\langle x,d_{j}\right\rangle =\dbrac{\sum_{i=1}^{d}\a_{i}b_{i},d_{j}}=\sum_{i=1}^{d}\a_{i}\left\langle b_{i},d_{j}\right\rangle =\sum_{i=1}^{d}\a_{i}\dl_{i,j}=\a_{j}\in\ZZ.\tag*{\qedhere}
\]
\end{proof}
\begin{cor}
\label{cor: the dual lattice is a lattice}The dual lattice $\dual{\lat}$
is a $d$-lattice, that spans the same linear subspace as $\lat$. 
\end{cor}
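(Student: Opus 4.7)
The plan is to observe that this corollary is essentially a direct repackaging of the two preceding results, so my proof will amount to extracting the relevant pieces of information and assembling them. First I would invoke Proposition \ref{prop: dual lattice basis}, which identifies $\dual{\lat}$ with the lattice $\lat_{\based}$ spanned by the columns of $\based = \base\brac{\base^{\transpose}\base}^{-1}$. To conclude that $\dual{\lat}$ is a rank $d$ lattice, I then need to verify that the $d$ columns of $\based$ are $\RR$-linearly independent: this follows from the orthonormality relation $\base^{\transpose}\based = \idmat{d}$ of Lemma \ref{lem: dual basis properties}, which forces $\based$ to have rank $d$. Equivalently, one may write $\based = \base\cdot g$ with $g = \brac{\base^{\transpose}\base}^{-1} \in \gl{d}\brac{\RR}$, so linear independence of the columns of $\based$ is inherited from that of $\base$.

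For the second claim, that $V_{\dual{\lat}} = V_{\lat}$, Lemma \ref{lem: dual basis properties} again gives exactly what is needed: $\base$ and $\based$ span the same $d$-dimensional subspace of $\RR^{n}$. Since the $\RR$-span of a lattice agrees with the $\RR$-span of any of its bases, this yields $V_{\lat} = V_{\lat_{\base}} = V_{\lat_{\based}} = V_{\dual{\lat}}$. There is no real obstacle here; the only minor subtlety worth flagging is that one should make explicit the (tautological) identity $V_{\lat_{\base}} = \sp{\RR}{\base}$, so that the equality of $\RR$-spans of the two bases transfers to equality of the subspaces spanned by the two lattices. The total proof is thus only a few lines long, and its role is mainly to record that $\dual{\lat}$ is a well-defined object in the same space of $d$-lattices in $V_{\lat}$, which is what makes iterated duality and all later manipulations of Section \ref{sec: Dual Lattices} meaningful.
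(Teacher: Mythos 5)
Your proof is correct and matches the paper's (implicit) reasoning: the corollary is stated without proof because it follows immediately from Proposition \ref{prop: dual lattice basis} and Lemma \ref{lem: dual basis properties}, exactly the two facts you invoke. Your note that linear independence of the columns of $\based$ can equivalently be read off from $\based=\base\,g$ with $g\in\gl{d}(\RR)$ is a harmless and clean way to make the rank-$d$ claim explicit.
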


\begin{cor}
\label{cor: dual reverses volums}One has that $\dual{\brac{\dual{\lat}}}=\lat$,
and $\covol{\dual{\lat}}=\covol{\lat}^{-1}$. 
\end{cor}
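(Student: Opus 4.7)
My plan is to deduce both assertions by explicit computation from the basis formula $\dual{\base}=\base(\base^{\transpose}\base)^{-1}$ established in Proposition \ref{prop: dual lattice basis}, and the fact (Corollary \ref{cor: the dual lattice is a lattice}) that $\dual{\lat}$ is itself a $d$-lattice spanning $V_{\lat}$, so that the formula for the dual basis can be applied again to $\dual{\base}$.

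First I would compute the Gram matrix of the dual basis. Since $\base^{\transpose}\base$ is symmetric, so is its inverse, and
\[
\dual{\base}^{\transpose}\dual{\base}=\bigl((\base^{\transpose}\base)^{-1}\bigr)^{\transpose}\base^{\transpose}\base\,(\base^{\transpose}\base)^{-1}=(\base^{\transpose}\base)^{-1}.
\]
From this a single application of $\covol{\lat}=\sqrt{\det(\base^{\transpose}\base)}$ yields
\[
\covol{\dual{\lat}}=\sqrt{\det\bigl(\dual{\base}^{\transpose}\dual{\base}\bigr)}=\sqrt{\det(\base^{\transpose}\base)^{-1}}=\covol{\lat}^{-1},
\]
which is the second claim.

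For the first claim I would simply reapply Proposition \ref{prop: dual lattice basis} to $\dual{\base}$, which is a basis for $\dual{\lat}$. The formula gives a basis for $\dual{(\dual{\lat})}$ as
\[
\dual{\base}\bigl(\dual{\base}^{\transpose}\dual{\base}\bigr)^{-1}=\base(\base^{\transpose}\base)^{-1}\cdot(\base^{\transpose}\base)=\base,
\]
using the identity for $\dual{\base}^{\transpose}\dual{\base}$ computed above. Hence $\dual{(\dual{\lat})}$ has $\base$ as a basis, so it equals $\lat$.

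Honestly, there is no serious obstacle here: both statements are one-line consequences of the explicit basis formula plus symmetry of $\base^{\transpose}\base$. The only thing requiring a small care is keeping track of transposes when inverting, and observing that the two appearances of $(\base^{\transpose}\base)^{-1}$ cancel cleanly because $\base^{\transpose}\base$ is symmetric.
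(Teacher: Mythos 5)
Your proof is correct and follows essentially the same route as the paper, which simply instructs the reader to ``check that'' $\based\brac{\based^{\transpose}\based}^{-1}=\base$ and that the Gram determinants are reciprocal. Your computation of $\dual{\base}^{\transpose}\dual{\base}=(\base^{\transpose}\base)^{-1}$ is exactly the verification the paper leaves implicit, and both claims fall out cleanly from it.
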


\begin{proof}
Check that $\based\brac{\based^{\transpose}\based}^{-1}=\base$ 
and that $\brac{\vbrac{\det\brac{\based^{\transpose}\based}}}^{1/2}=1/\brac{\vbrac{\det\brac{\base^{\transpose}\base}}}^{1/2}$. 
\end{proof}
\begin{example}
\label{exa: Dual of integral lattice}The dual of $\ZZ^{n}$ is $\ZZ^{n}$,
but in general the dual of an integral (and even primitive) lattice
must not be integral. For example, the dual of $\ZZ(1,1)$  is $\ZZ(\frac{1}{2},\frac{1}{2})$.
More generally, when $\lat<\ZZ^{n}$, then the entries of $\dual{\lat}$
are in the ring $\ZZ\sbrac{\frac{1}{\covol{\lat}^{2}}}$. To see
this, recall that (Proposition \ref{prop: dual lattice basis}) if
$\base$ is a basis for $\lat$, then $\base\brac{\base^{\transpose}\base}^{-1}$
is a basis for $\dual{\lat}$. Since $\brac{\base^{\transpose}\base}^{-1}=\text{adj}\left(\base\right)/\det\brac{\base^{\transpose}\base}=\text{adj}\left(\base\right)/\covol{\lat}^{2}$
where $\text{adj}\left(\base\right)$ is the adjucate matrix of $\base$,
and $\text{adj}\left(\base\right)$ is integral since $\base$ is,
we get that the entries of $\brac{\base^{\transpose}\base}^{-1}$
(and therefore also the entries of $\base\brac{\base^{\transpose}\base}^{-1}$)
are in $\ZZ\sbrac{\frac{1}{\covol{\lat}^{2}}}$. 
\end{example}

Consider the following two spaces: 
\[
\widetilde{\latspace{d,n}}:=\gl n\left(\RR\right)/\left(\left[\begin{smallmatrix}\gl d\left(\ZZ\right) & \RR^{d,n-d}\\
0_{n-d\times d} & \gl{n-d}\left(\RR\right)
\end{smallmatrix}\right]\right)=\text{space of \ensuremath{d}-lattices inside \ensuremath{\RR^{n}}}
\]
\[
\widetilde{\pairspace{d,n}}:=\gl n\left(\RR\right)/\left(\left[\begin{smallmatrix}\gl d\left(\ZZ\right) & \RR^{d,n-d}\\
0_{n-d\times d} & \gl{n-d}\left(\ZZ\right)
\end{smallmatrix}\right]\right)=\begin{array}{c}
\text{space of pairs \ensuremath{\left(\lat,L\right)} where \ensuremath{\lat} a \ensuremath{d}-lattice in \ensuremath{\RR^{n}}}\\
\text{and \ensuremath{L} is an \ensuremath{n-d} lattice in \ensuremath{\perpen{V_{\lat}}}}
\end{array}
\]
Let us justify why these quotients are indeed the spaces of lattices
we claim they are. A coset of a group element $g$ inside $\widetilde{\latspace{d,n}}$
corresponds to the $d$-lattice spanned by the first $d$ columns
of $g$. A coset of a group element $g$ inside $\widetilde{\pairspace{d,n}}$
corresponds to the pair $\left(\lat,L\right)$, where $\lat$ is the
$d$-lattice spanned by the first $d$ columns of $g$, and $L$ is
the lattice spanned by the orthogonal projections of the last $n-d$
columns of $g$ to $\perpen{V_{\lat}}$. Note that since the columns
of $g$ are independent, and the first $d$ columns span $V_{\lat}$,
then the projections of the last $n-d$ columns to $\perpen{V_{\lat}}$
must be independent; in particular, $L$ is an $\left(n-d\right)$-lattice.
Proving that these two identifications are well defined and bijective
is an easy exercise. 
\begin{prop}
\label{prop: Dual is measure preserving diffeo of L_d,n}The map $\lat\mapsto\dual{\lat}$
is a measure preserving diffeomorphism from $\widetilde{\latspace{d,n}}$
to itself.
\end{prop}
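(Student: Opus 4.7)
The plan is to realize the dual map as a smooth involution of $\widetilde{\latspace{d,n}}$ by lifting it to $\gl n(\RR)$ and then descending. Given $g=[B\mid C]\in\gl n(\RR)$ with $B$ the first $d$ columns, set $\Phi(g):=[\dual B\mid C]$ with $\dual B=B(B^T B)^{-1}$ as in Proposition~\ref{prop: dual lattice basis}. For $h=\begin{pmatrix}\gamma & M\\ 0 & \delta\end{pmatrix}$ in the subgroup $H$ defining $\widetilde{\latspace{d,n}}$, a direct calculation shows $\dual{(B\gamma)}=\dual B(\gamma^T)^{-1}$, and the identity $B=\dual B\cdot(B^T B)$ lets one absorb the shift $BM$ into the top-right block of a suitable element of $H$; since $\gamma\in\gl d(\ZZ)$ implies $(\gamma^T)^{-1}\in\gl d(\ZZ)$, one concludes $\Phi(gh)\in\Phi(g)H$. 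Thus $\Phi$ descends to a smooth self-map $\bar\Phi$ of $\widetilde{\latspace{d,n}}$, which by Proposition~\ref{prop: dual lattice basis} is precisely $\lat\mapsto\dual\lat$. Using symmetry of $B^T B$ one checks $(\dual B)^T\dual B=(B^T B)^{-1}$, hence $\dual{\dual B}=B$; consequently $\Phi\circ\Phi=\id$ and $\bar\Phi$ is a diffeomorphism.

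For measure preservation I plan to exploit the natural fibration $\pi:\widetilde{\latspace{d,n}}\to\gras{d,n}$, $\lat\mapsto V_\lat$, whose fiber over $V$ is the space of full lattices in $V$, identified with $\gl d(\RR)/\gl d(\ZZ)$ after a choice of orthonormal frame of $V$. Since $V_{\dual\lat}=V_\lat$, $\bar\Phi$ covers the identity on $\gras{d,n}$, and on each fiber it becomes the classical dual operation on full lattices in $V$. The latter, in the frame identification, is the map $B\mapsto(B^T)^{-1}$, which is a continuous automorphism of $\gl d(\RR)$; because $\gl d(\RR)$ is unimodular, any such automorphism preserves Haar measure (with Jacobian at the identity $X\mapsto-X^T$ of determinant $\pm 1$), and this descends to the finite-volume quotient $\gl d(\RR)/\gl d(\ZZ)$. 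Since the natural measure on $\widetilde{\latspace{d,n}}$ disintegrates as the $\so n$-invariant measure on $\gras{d,n}$ times the Haar-induced measure on each fiber, $\bar\Phi$ preserves it.

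The main point requiring care is the disintegration statement in the last sentence. To verify it one works in a $\gl n(\RR)$-Iwasawa decomposition $g=KP$, in which the first $d$ columns of $K$ parametrize the Grassmannian and the upper-left $d\times d$ block of $P$ parametrizes the lattice in the fiber; tracking the Haar measure through this decomposition yields the desired product structure. An alternative, which sidesteps the fibration altogether, is to prove directly that $\Phi$ preserves Haar on $\gl n(\RR)$: after reducing via the left $O(n)$-equivariance $\Phi(kg)=k\Phi(g)$ to the case $V_B=\mathrm{span}(e_1,\dots,e_d)$, splitting tangent variations of $B$ into parts parallel and perpendicular to $V_B$, and computing the two resulting Jacobian blocks, one obtains $|\det d\Phi_g|=\det(B^T B)^{-n}$, which together with the elementary identity $|\det\Phi(g)|=|\det g|/\det(B^T B)$ gives $\Phi_*\mu_G=\mu_G$ for $\mu_G=|\det g|^{-n}\,dg$; the statement for $\bar\Phi$ then follows by descent.
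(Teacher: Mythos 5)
Your construction of the lift $\Phi:[B\mid C]\mapsto[\dual B\mid C]$, the verification that it is well defined modulo $H=\left[\begin{smallmatrix}\gl d(\ZZ) & \RR^{d,n-d}\\ 0 & \gl{n-d}(\RR)\end{smallmatrix}\right]$, and the smoothness-via-algebraicity plus involution argument for the diffeomorphism statement are exactly the paper's route (and you flesh out the $\Phi(gh)\in\Phi(g)H$ check, which the paper leaves implicit). Where you diverge is the measure-preservation step. The paper dispenses with it in one line: the dual map is an involution, and since the relevant Radon measure is characterized up to a positive scalar, the pushforward must be $c\mu$ with $c^2=1$, forcing $c=1$. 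You instead give two constructive arguments: (i) disintegrate $\widetilde{\latspace{d,n}}$ over $\gras{d,n}$, note that $\bar\Phi$ covers the identity on the base and acts on each fiber $\gl d(\RR)/\gl d(\ZZ)$ by the Haar-preserving automorphism $B\mapsto B^{-T}$; or (ii) compute the Jacobian of $\Phi$ on $\gl n(\RR)$ directly, using $O(n)$-equivariance to reduce to $V_B=\mathrm{span}(e_1,\dots,e_d)$ and the identity $|\det\Phi(g)|=|\det g|/\det(B^TB)$. Both of your arguments are correct and more self-contained than the paper's one-liner (which presupposes that one has already pinned down uniqueness of the measure and that $\bar\Phi$ sends it to a scalar multiple of itself); the tradeoff is that they require verifying either the disintegration of the measure in Iwasawa coordinates or the Jacobian block computation, neither of which is trivial but both of which you correctly flag and sketch. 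For a reader the paper's argument is faster once the measure-uniqueness machinery is in place, whereas yours is more transparent and does not rely on that machinery.
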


\begin{proof}
Write a matrix $g\in\gl n\left(\RR\right)$, as two rectangular matrices
$g=\left[\base|\basec\right]$, where $\base_{n\times d}$ and $\basec_{n-d\times n}$.
Define a map from $\gl n\left(\RR\right)$ to itself by $\left[\base|\basec\right]\mapsto\left[\base\brac{\base^{\transpose}\base}^{-1}|\basec\right]$.
This map is well defined modulo the subgroup $\left[\begin{smallmatrix}\gl d\left(\ZZ\right) & \RR^{d,n-d}\\
0_{n-d\times d} & \gl{n-d}\left(\RR\right)
\end{smallmatrix}\right]$, and so it descends to a map from $\widetilde{\latspace{d,n}}$ to
itself . The latter maps the lattice represented by the coset of
$g$, $\lat=\lat_{\base}$, to  $\lat_{\base\brac{\base^{\transpose}\base}^{-1}}$,
which is $\dual{\lat}$, according to Proposition \ref{prop: dual lattice basis}.
Hence, the descended map from $\widetilde{\latspace{d,n}}$ to itself
is $\lat\mapsto\dual{\lat}$. Being an involution by Corollary \ref{cor: dual reverses volums},
this map is bijective and measure preserving. It is a diffeomorphism
(into its image, which is the whole space), since it is obtained from
a map on $\gl n\left(\RR\right)$ which is a diffeomorphism, because
it is algebraic and therefore smooth, and since it is its own inverse,
hence also its inverse is smooth.
\end{proof}
Following Proposition \ref{prop: Dual is measure preserving diffeo of L_d,n},
one expects that $\left(\lat,L\right)\mapsto\left(\dual{\lat},\dual L\right)$
is a measure preserving diffeomorphism of $\widetilde{\pairspace{d,n}}$.
Proving this fact will require the following lemma. 

\begin{lem}
\label{lem: Ortho-dual}Let $\base_{n\times d}$ and $\basec_{n\times n-d}$
such that $\base\cup\basec$ is an independent set, and let $\pi:\RR^{n}\to\perpen{V_{\base}}$
be the orthogonal projection. Let $\based_{n\times n-d}$ denote the
last $n-d$ columns of the basis $\dual{\sbrac{\base|\basec}}$. Then
$\lat_{\based}=\dual{\brac{\lat_{\pi\left(\basec\right)}}}$. 
\end{lem}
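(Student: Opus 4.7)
My plan is to exploit the explicit biorthogonality between a basis and its dual (Lemma \ref{lem: dual basis properties}), translate it from $\basec$ to $\pi\brac{\basec}$ by projecting away the component living in $V_{\base}$, and then invoke the uniqueness of the dual basis inside a prescribed linear subspace.

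First, I would set $g=\sbrac{\base|\basec}\in\gl n\brac{\RR}$ and split its dual block-wise as $\dual{g}=\sbrac{E\,|\,\based}$, where $E$ consists of the first $d$ columns and $\based$ of the last $n-d$. Reading the identity $g^{\transpose}\dual{g}=\idmat n$ supplied by Lemma \ref{lem: dual basis properties} block-by-block yields
\[
\base^{\transpose}E=\idmat d,\quad\base^{\transpose}\based=0,\quad\basec^{\transpose}E=0,\quad\basec^{\transpose}\based=\idmat{n-d}.
\]
The second relation puts every column of $\based$ into $\perpen{V_{\base}}$, and since $\dual{g}$ is invertible the columns of $\based$ are independent, so $V_{\based}=\perpen{V_{\base}}$.

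Next, I would verify that $\lat_{\pi\brac{\basec}}$ is a genuine $\brac{n-d}$-lattice spanning $\perpen{V_{\base}}$. This reduces to the independence of the columns of $\pi\brac{\basec}$, which follows from the hypothesis on $\base\cup\basec$: any nontrivial linear relation among the $\pi\brac{c_{j}}$ would place a nontrivial combination of the $c_{j}$ inside $V_{\base}$, contradicting independence.

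The concluding step will be to upgrade the biorthogonality to $\pi\brac{\basec}^{\transpose}\based=\idmat{n-d}$. Because $c_{j}-\pi\brac{c_{j}}\in V_{\base}$ is orthogonal to the columns of $\based\subset\perpen{V_{\base}}$, the two products $\basec^{\transpose}\based$ and $\pi\brac{\basec}^{\transpose}\based$ agree entry-by-entry. Since the columns of $\based$ lie in $V_{\pi\brac{\basec}}=\perpen{V_{\base}}$, I may write $\based=\pi\brac{\basec}A$ for some $\brac{n-d}\times\brac{n-d}$ matrix $A$; the biorthogonality then forces $A=\brac{\pi\brac{\basec}^{\transpose}\pi\brac{\basec}}^{-1}$, which is precisely the dual basis of $\pi\brac{\basec}$ produced by Proposition \ref{prop: dual lattice basis}. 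Hence $\lat_{\based}=\dual{\brac{\lat_{\pi\brac{\basec}}}}$. I expect the main obstacle to be purely notational --- keeping the four block identities straight and correctly invoking uniqueness of the dual basis within the fixed subspace $\perpen{V_{\base}}$ --- rather than conceptual.
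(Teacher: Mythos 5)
Your proof is correct, and it takes a genuinely different route from the paper. The paper argues via double inclusion at the level of lattices: it establishes that $\based$ and $\pi(\basec)$ span the same subspace $\perpen{V_{\base}}$, then checks $\langle \pi(c), y\rangle \in \ZZ$ for $c\in\basec$, $y\in\based$ to get one inclusion, and uses a coordinate expansion $y=\sum\a_i d_i$ with the biorthogonality relation to get the reverse. You instead pin down $\based$ \emph{exactly}: from the block identities of $g^{\transpose}\dual{g}=\idmat n$ you derive $\pi(\basec)^{\transpose}\based=\idmat{n-d}$, write $\based=\pi(\basec)A$, and solve for $A=(\pi(\basec)^{\transpose}\pi(\basec))^{-1}$, which is literally the dual basis formula of Proposition \ref{prop: dual lattice basis}. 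This buys a cleaner and more explicit proof (a single matrix identity rather than two set inclusions with integrality checks), and it sidesteps any direct appeal to the defining $\ZZ$-valued pairing condition; the trade-off is that it leans on the matrix characterization of the dual basis rather than the intrinsic definition of the dual lattice, so it is slightly less self-contained if one did not already have Proposition \ref{prop: dual lattice basis} in hand. One small notational point: the uniqueness of the matrix $A$ with $\based=\pi(\basec)A$ uses that $\pi(\basec)$ has full column rank, which you correctly verify; it is worth stating explicitly that this forces $A$ to be the claimed Gram-inverse rather than merely some matrix satisfying the biorthogonality relation.
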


\begin{proof}
We want to show that $\lat_{\based}$ is the dual of $\lat_{\pi\left(\basec\right)}$.
First, since the set $\basec$ is independent of $V_{\base}$, then
the set $\pi\brac{\basec}$ is an independent set inside $\perpen{V_{\base}}$.
Being of size $\vbrac{\pi\left(\basec\right)}=\vbrac{\basec}=n-d=\dim\left(\perpen{V_{\base}}\right)$,
then $\pi\left(\basec\right)$ also spans $\perpen{V_{\base}}$.
Moreover, according to Lemma \ref{lem: dual basis properties} and
the fact that $\dual{\sbrac{\base|\basec}}=\sbrac{*|\based}$, we
have that the columns of $\base$ and $\based$ are orthogonal, meaning
that  $\lat_{\based}\subset\perpen{V_{\base}}$. All in all $V_{\based}=V_{\lat_{\pi\brac{\basec}}}=\perpen{V_{\base}}$.
To show $\lat_{\based}\subseteq\lat_{\pi\brac{\basec}}$, it suffices
to show that $\based\subseteq\lat_{\pi\brac{\basec}}$. Namely, that
for every $c\in\basec$ and $y\in\based$ it holds that $\left\langle \pi\left(c\right),y\right\rangle \in\ZZ$.
Indeed, 
\[
\left\langle \pi\left(c\right),y\right\rangle =\left\langle c,y\right\rangle \in\ZZ
\]
where the equality is due to the fact that $y\in\perpen{V_{\base}}$
and the inclusion holds since $c\in\base|\basec$ and $y\in\dual{\left(\base|\basec\right)}$.
This shows that $\lat_{\based}\subseteq\dual{\brac{\lat_{\pi\left(\basec\right)}}}$.
For the reverse inclusion, let $y\in\dual{\brac{\lat_{\pi\left(\basec\right)}}}$.
Since in particular $y\in V_{\based}$, so $y=\sum\a_{i}d_{i}$. We
need to show that $\a_{i}$ are integers. indeed, 
\[
\a_{i}=\sum\a_{i}\dl_{i,j}=\sum\a_{i}\dbrac{d_{i},\pi\brac{c_{j}}}=\dbrac{\sum\a_{i}d_{i},\pi\brac{c_{j}}}=\dbrac{y,\pi\brac{c_{j}}}\in\ZZ.
\]
\end{proof}

\begin{prop}
\label{prop: All that Dual... and P_d,n}The three maps 
\[
\begin{array}{ccc}
\left(\lat,L\right)\mapsto\left(\dual{\lat},L\right), & \left(\lat,L\right)\mapsto\left(\lat,\dual L\right), & \left(\lat,L\right)\mapsto\left(\dual{\lat},\dual L\right)\end{array}
\]
from $\widetilde{\pairspace{d,n}}$ to itself are measure preserving
diffeomorphisms.
\end{prop}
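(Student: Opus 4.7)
The plan is to factor each of the three maps through the projection
\[
\pi : \widetilde{\pairspace{d,n}} \to \widetilde{\latspace{d,n}}, \qquad (\lat,L) \mapsto \lat,
\]
which realizes $\widetilde{\pairspace{d,n}}$ as a bundle whose fiber over $\lat$ is the space of full-rank lattices in $\perpen{V_\lat}$; this fiber is canonically (after a choice of orthonormal frame) identified with $\gl{n-d}(\RR)/\gl{n-d}(\ZZ)$. Writing $H_0 = \left[\begin{smallmatrix}\gl d(\ZZ) & \RR^{d,n-d} \\ 0 & \gl{n-d}(\ZZ)\end{smallmatrix}\right]$ and $H_1 = \left[\begin{smallmatrix}\gl d(\ZZ) & \RR^{d,n-d} \\ 0 & \gl{n-d}(\RR)\end{smallmatrix}\right]$, so that $\widetilde{\pairspace{d,n}} = \gl n(\RR)/H_0$, $\widetilde{\latspace{d,n}} = \gl n(\RR)/H_1$ and $H_1/H_0 \cong \gl{n-d}(\RR)/\gl{n-d}(\ZZ)$, the standard quotient-measure formula (noting that $H_0$, unlike $H_1$, is unimodular because $\gl d(\ZZ)\times\gl{n-d}(\ZZ)$ acts on $\RR^{d,n-d}$ with unit Jacobian) disintegrates the natural measure on $\widetilde{\pairspace{d,n}}$ as the product of the natural measure on $\widetilde{\latspace{d,n}}$ and fiberwise Haar on $\gl{n-d}(\RR)/\gl{n-d}(\ZZ)$.

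With this product structure in hand, the first map $(\lat,L) \mapsto (\dual{\lat},L)$ preserves fibers, since $V_{\dual{\lat}} = V_\lat$ by Corollary \ref{cor: the dual lattice is a lattice}, and acts on the base as $\lat \mapsto \dual{\lat}$, a measure-preserving diffeomorphism by Proposition \ref{prop: Dual is measure preserving diffeo of L_d,n}; by Fubini it is itself a measure-preserving diffeomorphism. The second map $(\lat,L) \mapsto (\lat,\dual L)$ fixes the base and acts on each fiber as $L \mapsto \dual L$, which is a measure-preserving diffeomorphism by Proposition \ref{prop: Dual is measure preserving diffeo of L_d,n} applied inside the $(n-d)$-dimensional Euclidean space $\perpen{V_\lat}$ (the full-rank case, where the lift on $\gl{n-d}(\RR)$ is the Cartan involution $g\mapsto(g^{\transpose})^{-1}$, which preserves Haar by unimodularity). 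Fubini again gives the conclusion, and the third map is their composition.

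For the underlying smoothness and bijectivity I would give explicit rational lifts to $\gl n(\RR)$: the first as $[\base|\basec] \mapsto [\base(\base^{\transpose}\base)^{-1} \mid \basec]$ (mirroring Proposition \ref{prop: Dual is measure preserving diffeo of L_d,n}); the second as $[\base|\basec] \mapsto [\base \mid \basec_{\perp}(\basec_{\perp}^{\transpose}\basec_{\perp})^{-1}]$ with $\basec_{\perp} = (\idmat n - \base(\base^{\transpose}\base)^{-1}\base^{\transpose})\basec$; and the third as their composition. A routine coset computation verifies that each descends to $\widetilde{\pairspace{d,n}}$ — a useful sanity check is the orthogonal slice $\basec^{\transpose}\base = 0$, on which both lifts preserve the slice and restrict to the involution $g \mapsto (g^{\transpose})^{-1}$. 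Each map is an involution by Corollary \ref{cor: dual reverses volums}, hence bijective, and smoothness is immediate because each lift is rational. The main technical obstacle is the measure disintegration claimed in the first paragraph; once that is in place, measure-preservation of all three maps reduces to two invocations of Proposition \ref{prop: Dual is measure preserving diffeo of L_d,n} — one on the base and one on the fiber.
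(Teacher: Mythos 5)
Your proposal is correct, and it takes a genuinely different route from the paper's. The paper's proof is more elementary: it writes down explicit lifts to $\gl n(\RR)$ for each of the three maps, invokes Lemma \ref{lem: Ortho-dual} to check that the lift of $(\lat,L)\mapsto(\lat,\dual L)$ descends to the right map of pairs, and then concludes measure-preservation and smoothness ``similarly to the proof of Proposition \ref{prop: Dual is measure preserving diffeo of L_d,n}'' (i.e.\ from the descended map being an algebraic involution). You instead factor through the fibration $\widetilde{\pairspace{d,n}}\to\widetilde{\latspace{d,n}}$, disintegrate the $\gl n(\RR)$-invariant measure on $\widetilde{\pairspace{d,n}}=\gl n(\RR)/H_0$ fiberwise over the base, and reduce each of the three maps by Fubini to Proposition \ref{prop: Dual is measure preserving diffeo of L_d,n} applied either on the base or on a fiber. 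Your key structural observation---that $V_{\dual{\lat}}=V_{\lat}$ (Corollary \ref{cor: the dual lattice is a lattice}), so the first map literally preserves fibers---is what makes the Fubini argument clean, and your fiberwise reduction of $L\mapsto\dual L$ to the Cartan involution on $\gl{n-d}(\RR)$ is a nice way to see measure-preservation on the fiber without further computation. Your explicit lift for the second map, $[\base|\basec]\mapsto[\base\mid \basec_{\perp}(\basec_{\perp}^{\transpose}\basec_{\perp})^{-1}]$, differs from the paper's (which uses the last $n-d$ columns of $g^{-\transpose}$ and needs Lemma \ref{lem: Ortho-dual} to identify what it spans) but produces the same coset, and your version has the advantage that the descent follows directly from Proposition \ref{prop: dual lattice basis} without the extra lemma. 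Note also that you correctly localize the involution property to the \emph{descended} map rather than the lift; the paper's claim that the lift $[\base|\basec]\mapsto[\base|\based]$ is itself an involution on $\gl n(\RR)$ does not literally hold (the first $d$ columns of $[\base|\based]$ differ from those of $\dual{[\base|\basec]}$, so iterating does not recover $\basec$), though this does not affect the paper's conclusion.

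One point in your argument deserves a flag: you invoke ``the standard quotient-measure formula'' to disintegrate the measure on $\widetilde{\pairspace{d,n}}=G/H_0$ over $\widetilde{\latspace{d,n}}=G/H_1$, noting that $H_0$ is unimodular. That is necessary, but since $H_1$ is \emph{not} unimodular (as you observe), $G/H_1$ carries no $G$-invariant measure, so the base factor in the disintegration is only a quasi-invariant (rho-function) measure, and the claim that the base map preserves it is exactly the content of Proposition \ref{prop: Dual is measure preserving diffeo of L_d,n} for whatever measure that proposition intends; the ``standard formula for unimodular quotients'' does not apply verbatim. This does not break the argument---the same ambiguity is implicit in the statement of Proposition \ref{prop: Dual is measure preserving diffeo of L_d,n} itself, and both approaches ultimately lean on it---but it is worth stating explicitly which measure on $\widetilde{\latspace{d,n}}$ the disintegration produces and confirming that it is the one for which the cited proposition is being invoked.
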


\begin{proof}
Consider first the map $\gl n\left(\RR\right)\to\gl n\left(\RR\right)$
defined in the proof of Proposition \ref{prop: Dual is measure preserving diffeo of L_d,n}.
This map is well defined modulo the subgroup $\left[\begin{smallmatrix}\gl d\left(\ZZ\right) & \RR^{d,n-d}\\
0_{n-d\times d} & \gl{n-d}\left(\ZZ\right)
\end{smallmatrix}\right]$, and so it descends to a map from $\widetilde{\pairspace{d,n}}$
to itself. As such, according to the explanation about the identification
between cosets of this subgroup in $\sl n\left(\RR\right)$ and elements
$\left(\lat,L\right)$ in $\widetilde{\pairspace{d,n}}$, the descended
map from $\widetilde{\pairspace{d,n}}$ to itself is $\left(\dual{\lat},L\right)\mapsto\left(\dual{\lat},L\right)$.
Similarly to the proof of Proposition \ref{prop: Dual is measure preserving diffeo of L_d,n},
the map is a measure preserving diffeomorphism. 

For the map $\left(\lat,L\right)\mapsto\left(\lat,\dual L\right)$,
consider the map from $\gl n\left(\RR\right)$ to itself given by
$g=\sbrac{\base|\basec}\mapsto\sbrac{\base|\based}$, where $\base=\base_{n\times d}$,
$\basec=\basec_{n\times n-d}$ and $\based$ is the last $n-d$ columns
of the basis $\dual g=\dual{\sbrac{\base|\basec}}$. This map is algebraic
and also an involution, because by taking the last $n-d$ columns
of the basis $\dual{\brac{\dual{\sbrac{\base|\basec}}}}=\base|\basec$
we recover $\basec$. It is defined modulo the subgroup $\left[\begin{smallmatrix}\gl d\left(\ZZ\right) & \RR^{d,n-d}\\
0_{n-d\times d} & \gl{n-d}\left(\ZZ\right)
\end{smallmatrix}\right]$, and according to Lemma \ref{lem: Ortho-dual}, it descends to the
map $\left(\lat,L\right)\mapsto\left(\lat,\dual L\right)$ from $\widetilde{\pairspace{d,n}}$
to itself. Again, it is a measure preserving diffeomorphism. The map
$\left(\lat,L\right)\mapsto\left(\dual{\lat},\dual L\right)$ is obtained
similarly from $g=\sbrac{\base|\basec}\mapsto\sbrac{\base\brac{\base^{\transpose}\base}^{-1}|\based}$,
where $\base,\basec,\based$ are as before. 
\end{proof}

\subsubsection*{Spaces of unimodular lattices }

The space $\latspace{d,n}=\sl n\left(\RR\right)/\left(\left[\begin{smallmatrix}\sl d\left(\ZZ\right) & \RR^{d,n-d}\\
0_{n-d\times d} & \sl{n-d}\left(\RR\right)
\end{smallmatrix}\right]\times A^{\prime}\right)$, where 
\[
A^{\prime}=\left\{ \left[\begin{smallmatrix}\a^{\frac{1}{d}}\idmat d & 0_{d\times n-d}\\
0_{n-d\times d} & \a^{-\frac{1}{n-d}}\idmat{n-d}
\end{smallmatrix}\right]:\a>0\right\} ,
\]
is the space of oriented $d$-lattices in $\RR^{n}$, up to homothety.
Similarly, the space $\pairspace{d,n}=\sl n\left(\RR\right)/\left(\left[\begin{smallmatrix}\sl d\left(\ZZ\right) & \RR^{d,n-d}\\
0_{n-d\times d} & \sl{n-d}\left(\ZZ\right)
\end{smallmatrix}\right]\times A^{\prime}\right)$ is the space of pairs $\left(\lat,L\right)$ of oriented lattices
satisfying $\covol{\lat}\covol{\qlat}=1$, where $\lat$ is a $d$-lattice
and $L$ is an $\brac{n-d}$-lattice in $\perpen{V_{\lat}}$. More
accurately, it is the space of equivalence classes of such pairs,
modulo the equivalence relation $\left(\lat^{\prime},L^{\prime}\right)\sim\left(\lat,L\right)$
iff there exists $\a>0$ such that $\left(\lat^{\prime},L^{\prime}\right)=\left(\a^{\frac{1}{d}}\lat,L^{-\frac{1}{n-d}}\right)$. 

These spaces, unlike $\widetilde{\latspace{d,n}}$ and $\widetilde{\pairspace{d,n}}$,
have finite volume. 
\begin{prop}
\label{rem: dual of unimodular}Propositions \ref{prop: Dual is measure preserving diffeo of L_d,n}
and \ref{prop: All that Dual... and P_d,n} hold also when replacing
$\widetilde{\latspace{d,n}}$ and $\widetilde{\pairspace{d,n}}$ by
$\latspace{d,n}$ and $\pairspace{d,n}$. 
\end{prop}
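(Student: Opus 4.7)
My plan is to reduce to Propositions \ref{prop: Dual is measure preserving diffeo of L_d,n} and \ref{prop: All that Dual... and P_d,n} by realizing $\latspace{d,n}$ and $\pairspace{d,n}$ as transversal slices inside the larger ambient spaces $\widetilde{\latspace{d,n}}$ and $\widetilde{\pairspace{d,n}}$. For $\latspace{d,n}$, every homothety class has a unique unimodular representative (namely $\covol{\lat}^{-1/d}\lat$), giving an identification $\latspace{d,n}\cong S_{1}:=\{\lat : \covol{\lat}=1\}$. For $\pairspace{d,n}$, within each $A'$-orbit in $\{(\lat,L):\covol{\lat}\covol{L}=1\}$ the $A'$-parameter $\alpha=\covol{\lat}^{-1}$ produces a unique representative with $\covol{\lat}=\covol{L}=1$, yielding $\pairspace{d,n}\cong S_{2}:=\{(\lat,L):\covol{\lat}=\covol{L}=1\}$. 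Under these slice identifications the induced topology, smooth structure, and invariant measure on $\latspace{d,n}$ and $\pairspace{d,n}$ can all be pulled back from the ambient spaces.

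The key observation for well-definedness is that duality inverts covolume (Corollary \ref{cor: dual reverses volums}), so $\lat\mapsto\dual{\lat}$ sends $S_{1}$ to itself, and each of the three pair maps sends $S_{2}$ to itself: $\covol{\dual{\lat}}=\covol{\dual{L}}=1$ whenever $\covol{\lat}=\covol{L}=1$. Restricting the ambient diffeomorphisms from Propositions \ref{prop: Dual is measure preserving diffeo of L_d,n} and \ref{prop: All that Dual... and P_d,n} to these slices therefore yields smooth self-maps on $\latspace{d,n}$ and $\pairspace{d,n}$. Each is an involution (since dualizing twice is the identity, again by Corollary \ref{cor: dual reverses volums}), hence a bijection, hence a diffeomorphism.

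For measure preservation, I plan to realize the measures on $\latspace{d,n}$ and $\pairspace{d,n}$ (normalized as in Section \ref{subsec: measures on spaces}) via disintegration of the ambient Radon measures along the covolume map (for $\widetilde{\latspace{d,n}}$) or along the $A'$-orbit parameterization combined with the covolume product (for $\widetilde{\pairspace{d,n}}$), so that they coincide with the conditional measures on the slices $S_{1}$ and $S_{2}$. Since the ambient duality maps are measure-preserving and send the fiber over covolume $1$ to itself, the induced measures on the slices are preserved, which gives the desired conclusion. The main obstacle is checking that the disintegrated measure on each slice agrees, up to the prescribed normalization, with the invariant measure on $\latspace{d,n}$ or $\pairspace{d,n}$ described in Remark \ref{rem: true quotients L_d,n and P_d,n}; this is a bookkeeping calculation involving the Jacobian of the covolume fibration. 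An alternative that sidesteps the bookkeeping is to invoke uniqueness (up to a positive scalar) of the invariant Radon measure on $\latspace{d,n}$ and $\pairspace{d,n}$ and fix the scalar from the involution property; this requires first checking that duality preserves enough of the invariance structure, e.g., commutation with the left $\so n(\RR)$-action, which holds because $\dual{(k\lat)}=k\dual{\lat}$ for orthogonal $k$.
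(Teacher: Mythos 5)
Your approach is genuinely different from the paper's. The paper's proof is brief and direct: it observes (as you do) that duality preserves unimodularity (Corollary \ref{cor: dual reverses volums}) and is an involution, and then states that Propositions \ref{prop: Dual is measure preserving diffeo of L_d,n} and \ref{prop: All that Dual... and P_d,n} should simply be re-proved with $\gl{}$ replaced by $\sl{}$ and the ambient group quotiented by $A^{\prime}$ as well; i.e., one reruns the same explicit matrix-level construction in the modified quotient, so that the descended involution is automatically smooth and measure preserving by the same argument as before. You instead propose to keep the ambient spaces fixed and realize $\latspace{d,n}$ and $\pairspace{d,n}$ as unimodular slices inside $\widetilde{\latspace{d,n}}$ and $\widetilde{\pairspace{d,n}}$, then restrict the ambient diffeomorphisms. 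This is a legitimate reduction and buys you not having to redo the algebraic computations, but it shifts the burden entirely to the transfer of measure, which is the delicate point.

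The concrete gap is in your second route for measure preservation. You appeal to uniqueness of the invariant Radon measure on $\latspace{d,n}$ and $\pairspace{d,n}$, and you only verify compatibility of duality with the left $\so n(\RR)$-action. But the measure on $\latspace{d,n}$ (Remark \ref{rem: true quotients L_d,n and P_d,n}) is characterized by invariance under the full group $\so n\left(\RR\right)\times P_{d}$, not $\so n(\RR)$ alone, and duality does not commute with the $P_{d}$-action: if $\base$ is a basis matrix and $p\in P_{d}$, then $\dual{(\base p)}=\base\brac{\base^{\transpose}\base}^{-1}p^{-\transpose}$, so duality intertwines right multiplication by $p$ with right multiplication by $p^{-\transpose}$, which is lower triangular, not in $P_{d}$. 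Hence the pushforward $\phi_{*}\mu$ is a priori invariant under $\so n(\RR)$ and the opposite (lower-triangular) parabolic, not under $P_{d}$, and the uniqueness hypothesis you want to invoke does not directly apply. To close this you would need an additional argument (for instance, conjugating by the longest Weyl element inside $\so d(\RR)\subset\so n(\RR)$ to relate the two parabolic invariances, or carrying out the disintegration bookkeeping you flag as your first route). Your first route (disintegration along the covolume map) is sound in principle; note that the Jacobian of $c\mapsto 1/c$ at $c=1$ is $1$, which is exactly what makes the slice at covolume one work, but you still need to identify the conditional measure on the slice with the measure of Section \ref{subsec: measures on spaces}, which is not immediate.
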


Indeed, the dual of a unimodular lattice is also unimodular, by Corollary
\ref{cor: dual reverses volums}; so the map $\lat\mapsto\dual{\lat}$
is an involution of $\latspace{d,n}$, and similarly the maps in Proposition
\ref{prop: All that Dual... and P_d,n} are involutions of $\pairspace{d,n}$.
The proofs for the adaptations of Propositions \ref{prop: Dual is measure preserving diffeo of L_d,n}
and \ref{prop: All that Dual... and P_d,n} to unimodular lattices
are obtained by adjusting the proofs of these propositions: the appearances
of $\gl{}$ are replaced by $\sl{}$, and the ambient group should
be quotiented by $A^{\prime}$ as well. 
\begin{rem}
Since the spaces $\latspace{d,n}$ and $\pairspace{d,n}$ are unbounded
but of finite volume, it is desireble to have a criterion for determining
when a set is compact, or alternatively, when does a sequence of elements
in the space diverge to infinity. For the more familiar space of full
unimodular lattices in $\RR^{d}$, $\sl d\left(\RR\right)/\sl d\left(\ZZ\right)$,
which is also non-compact but of finite measure, the answer is provided
by Mahler's Compactness Criterion \cite[Chapter V.3]{Bekka_Mayer}.
The latter states that a set is compact if and only if there exists
$\dl>0$ such that all the lattices in this set have shortest vector
of length $>\dl$. Equivalently, a sequence of lattices $\{\lat_{m}\}$
diverges if and only if the lengths of the shortest vectors in $\{\lat_{m}\}$
is a squence of positive real numbers that converges to zero. The
purpose of the following is to state an analogous criterion for compactness
in the spaces $\latspace{d,n}$ and $\pairspace{d,n}$. For every
equivalence class $\unilat{\lat_{0}}\in\latspace{d,n}$ (resp.\ $\unilat{(\lat_{0},\qlat_{0})}\in\pairspace{d,n}$),
a \emph{unimodular representative} is the unique representative $\lat\in\unilat{\lat_{0}}$
of covolume one (resp.\ $(\lat,\qlat)\in\unilat{(\lat_{0},\qlat_{0})}$
such that $\lat$ and $\qlat$ are of covolume one).

\end{rem}

\begin{prop}
A subset $\latset$ of $\latspace{d,n}$ is bounded iff there is some
$\dl>0$ such that for every $\unilat{\lat_{0}}\in\latset$, its unimodular
representative $\lat\in\unilat{\lat_{0}}$ has the property that the
shortest vector of $\lat$ is of length $\geq\dl$. 

A subset $\pairset$ of $\pairspace{d,n}$ is bounded iff there is
some $\dl>0$ such that for every $\unilat{(\lat_{0},\qlat_{0})}\in\pairset$,
its unimodular representative $(\lat,\qlat)\in\unilat{(\lat_{0},\qlat_{0})}$
has the property that both shortest vectors of $\lat$ and $\qlat$
are of length $\geq\dl$. 
\end{prop}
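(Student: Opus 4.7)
The plan is to reduce the statement to the classical Mahler Compactness Criterion on the spaces $\unilatspace d$ and $\unilatspace{n-d}$ of full unimodular lattices. Recall from (\ref{eq: identifications}) and Remark \ref{rem: true quotients L_d,n and P_d,n} that, once a transversal $K^{\prime}$ is fixed, there are diffeomorphisms $\latspace{d,n}\diffeo\unilatspace d\times\gras{d,n}$ and $\pairspace{d,n}\diffeo\unilatspace d\times\gras{d,n}\times\unilatspace{n-d}$. Since the Grassmannian $\gras{d,n}$ is compact, a subset $\latset\subset\latspace{d,n}$ is bounded if and only if its projection to $\unilatspace d$ is bounded, and a subset $\pairset\subset\pairspace{d,n}$ is bounded if and only if its projections to both $\unilatspace d$ and $\unilatspace{n-d}$ are bounded.

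The next step is to verify that the length of the shortest vector of the unimodular representative of $\unilat{\lat_{0}}$ in $\RR^{n}$ equals the length of the shortest vector of its image in $\unilatspace d$. Using the RI decomposition, a representative of $\unilat{\lat_{0}}$ can be chosen of the form $g=k^{\prime}g_{d}^{\dprime}\cdot(\text{factors in } G_{n-d}^{\dprime}A^{\prime}N^{\prime})$, with $k^{\prime}\in K^{\prime}$ and $g_{d}^{\dprime}\in G_{d}^{\dprime}$ whose upper $d\times d$ block has determinant one. By Proposition \ref{prop: explicit RI coordinates of g}$\brac{iv}$--$\brac v$, the unimodular representative $\lat\subset\RR^{n}$ is the image under $k^{\prime}$ of the unimodular lattice $\lat_{g_{d}}\subset\RR^{d}$ spanned by the columns of the upper block of $g_{d}^{\dprime}$, and $\lat_{g_{d}}$ is precisely the image of $\unilat{\lat_{0}}$ in $\unilatspace d$. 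Since $k^{\prime}\in\so n\brac{\RR}$ acts by isometries of $\RR^{n}$, the lengths of the shortest vectors of $\lat$ and $\lat_{g_{d}}$ coincide. An analogous statement holds for the $\unilatspace{n-d}$ component of $\pairspace{d,n}$ via Proposition \ref{prop: explicit RI coordinates of g}$\brac{iv}^{\sharp}$--$\brac v^{\sharp}$.

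Combining these two observations with the classical Mahler Compactness Criterion applied on $\unilatspace d$ (and on $\unilatspace{n-d}$) yields both statements: $\latset$ is bounded iff the shortest vectors of its unimodular representatives are uniformly bounded below by some $\dl>0$, and $\pairset$ is bounded iff for each pair the shortest vectors of both $\lat$ and $\qlat$ are uniformly bounded below. The main point requiring care is that the projections appearing in (\ref{eq: identifications}) are non-canonical, depending on the transversal $K^{\prime}$; however, once $K^{\prime}$ is fixed, these projections are smooth (hence continuous) and proper (since $\gras{d,n}$ is compact), so boundedness is correctly transported, and the shortest-vector length itself is a geometric invariant of a lattice in $\RR^{n}$, hence independent of the choice of $K^{\prime}$.
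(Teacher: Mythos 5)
Your overall strategy (reduce to the classical Mahler criterion by observing that the shortest-vector length of the unimodular representative is the invariant that controls compactness, and that it is preserved when one rotates the ambient $d$-lattice back to $\RR^{d}$) is the same idea that drives the paper's proof, and your verification that the shortest vectors of $\lat\subset\RR^{n}$ and of its image $\lat_{g_{d}}\subset\RR^{d}$ agree is correct.

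However, the step in which you transport boundedness across the non-canonical identification (\ref{eq: identifications}) contains a genuine gap. You assert that ``once $K^{\prime}$ is fixed, these projections are smooth (hence continuous) and proper (since $\gras{d,n}$ is compact).'' This is not justified. The set $K^{\prime}\subset\so n\left(\RR\right)$ is a transversal (a set of coset representatives) for $K/K^{\dprime}$, not a smooth section: the principal bundle $\so n\left(\RR\right)\to\gras{d,n}$ with fibre $\so d\left(\RR\right)\times\so{n-d}\left(\RR\right)$ is in general non-trivial. For instance, when $d=1$, $n=3$, the bundle is $\so 3\left(\RR\right)\cong\RR P^{3}\to S^{2}$ with fibre $S^{1}$, which admits no continuous global section (compare $\pi_{1}\left(\RR P^{3}\right)=\ZZ/2$ with $\pi_{1}\left(S^{2}\times S^{1}\right)=\ZZ$). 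Consequently, the induced map $\latspace{d,n}\to\unilatspace d$ of Remark \ref{rem: k' detemines projection to U_d and U_n-d} is not continuous in general, the ``identification'' $\latspace{d,n}\leftrightarrow\unilatspace d\times\gras{d,n}$ is only a bijection rather than a homeomorphism, and the inference that a subset of $\latspace{d,n}$ is bounded iff its projection to $\unilatspace d$ is bounded does not follow from the compactness of $\gras{d,n}$. Both directions of your boundedness transfer depend on this unproved continuity/properness.

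This is precisely the technical point the paper's proof goes to work on: it uses the Siegel-set-based spread model $K^{\prime}\left(\groupfund d\times\groupfund{n-d}\right)$ from Proposition \ref{prop: spread models that we need} together with the fact, imported from \cite[Prop.\ 8.1]{HK_WellRoundedness}, that the projection $\wc\to\pairspace{d,n}$ restricted to the closure of this spread model is \emph{proper}; after that, boundedness is read off the $A^{\dprime}$-coordinates via the Siegel set structure and Proposition \ref{prop: explicit RI coordinates of g}. If you want to avoid that machinery, you would at least need to replace your appeal to the non-canonical identifications by an argument that the \emph{canonical} projection $\latspace{d,n}\to\shapespace d$ (resp.\ $\pairspace{d,n}\to\shapespace d\times\shapespace{n-d}$) is proper, and that is itself the non-trivial content your sketch is currently sweeping under the rug.
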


\begin{proof}
We will prove the claim for $\pairspace{d,n}$, since case of $\latspace{d,n}$
is similar. It is not hard to see (e.g., Remark \ref{rem: true quotients L_d,n and P_d,n}
in the present paper) that $\pairspace{d,n}=\wc/G^{\prime\prime}(\mathbb{Z})$
where
\[
\wc=\so n\left(\RR\right)\left[\begin{smallmatrix}P_{d} & 0\\
0 & P_{n-d}
\end{smallmatrix}\right]\subset\sl n(\RR),\quad G^{\dprime}=\left[\begin{smallmatrix}\sl d\left(\RR\right) & 0_{d,n-d}\\
0_{n-d,d} & \sl{n-d}\left(\RR\right)
\end{smallmatrix}\right]<\sl n(\RR),
\]
and $P_{i}$ is the group of $i\times i$ upper triangular matrices
with positive diagonal elements and determinant one. Moreover, there
exists a construction for a fundamental domain $\groupfund i\subset\sl i(\RR)$
representing $\sl i(\RR)/\sl i(\ZZ)$ and contained in a \emph{Siegel
set} (\cite{Bekka_Mayer}), and a subset $K^{\prime}$ of $\so n\left(\RR\right)$,
such that (Proposition \cite[Prop. 8.1]{HK_WellRoundedness}) the
set $K^{\prime}(\groupfund d\times\groupfund{n-d})\subset\wc$, where
$\groupfund d\times\groupfund{n-d}$ stands for $\left[\begin{smallmatrix}\groupfund d & 0_{d,n-d}\\
0_{n-d,d} & \groupfund{n-d}
\end{smallmatrix}\right]\subset G^{\dprime}$, is a fundamental domain representing $\pairspace{d,n}$ with the
property that the restriction of the natural projection $\pi:Q\to\pairspace{d,n}$
to $\overline{K^{\prime}(\groupfund d\times\groupfund{n-d})}$ is
proper. As a result, $\pairset$ is bounded iff $\pi|_{K^{\prime}(\groupfund d\times\groupfund{n-d})}^{-1}(\pairset)$
is bounded.

Since $K^{\prime}$ is bounded, and by definition of the Siegel sets
(or the construction of $\groupfund d,\groupfund{n-d}$), a set that
is contained inside $K^{\prime}(\groupfund d\times\groupfund{n-d})$
is unbounded if and only if its projection to the diagonal subgroup
in $G^{\dprime}$ is unbounded. Again by the construction of the Siegel
sets, an element in the projection of $K^{\prime}(\groupfund d\times\groupfund{n-d})$
 is of the form $\text{diag}(a_{1},\cdots,a_{d})\times\text{diag}(b_{1},\cdots,b_{n-d})$,
where $0<a_{1}\prec a_{2}\cdots\prec a_{d}$ ; $0<b_{1}\prec b_{2}\cdots\prec b_{n-d}$
and $a_{1}\cdots a_{d}=1=b_{1}\cdots b_{n-d}$. Notice that $a_{1}$
(resp.\ $b_{1}$) is bounded from below iff $a_{d}=\frac{1}{a_{1}\cdots a_{d-1}}\succ\frac{1}{a_{1}^{d}}$
(resp.\ $b_{n-d}=\frac{1}{b_{1}\cdots b_{n-d}}\succ\frac{1}{b_{1}^{n-d}}$)
is bounded from above. As a consequence, $\pairset$ is bounded if
and only if the entries $a_{1}$ and $b_{1}$ in the diagonal components
of $\pi|_{K^{\prime}(\groupfund d\times\groupfund{n-d})}^{-1}(\pairset)$
are bounded from below by some $\epsilon>0$. 

Let $\lat^{1}$ and $\qlat^{1}$ be the rank one lattices spanned
by the shortest elements in $\lat$ and $\qlat$ respectively. According
to Proposition \ref{prop: explicit RI coordinates of g} parts $(ii),(ii)^{\#},(iii),(iii)^{\#}$
, and using the construction of a Siegel set (that relies on the
notion of a \emph{Siegel basis} for a lattice, in which the first
element is a shortest vector in a lattice), the boundedness from below
of $a_{1}$ and $b_{1}$ is equivaent to boundedness from below of
the covolmes of $\lat^{1}$ and $\qlat^{1}$, which are clearly the
lengths of the shortest vectors in $\lat$ and $\qlat$ respectively. 
\end{proof}

\subsubsection*{Relation between symmetries of a lattice and of its dual}

A \emph{shape} of a lattice (sometimes referred to as a \emph{type}
of a lattice) is its similarity class modulo rotation and rescaling.
It is a very common parameter to study in the context of lattices,
and appears e.g. in crystallography and the theory of periodic tilings.
 If two lattices in $\RR^{n}$ have the same shape, then in particular
they are invariant under the same symmetries, meaning the same orthogonal
transformations of $\RR^{n}$. The opposite does not hold, e.g. the
lattices $\ZZ e_{1}\oplus\ZZ\a e_{2}$ and $\ZZ e_{1}\oplus\ZZ\b e_{2}$
for $1<\a<\b$ have different shapes, but both are invariant only
under the orthogonal transformations $\pm\left[\begin{smallmatrix}1 & 0\\
0 & 1
\end{smallmatrix}\right],\pm\left[\begin{smallmatrix}1 & 0\\
0 & -1
\end{smallmatrix}\right]\in\ort 2\left(\RR\right)$. Below we observe the surprising fact that even though a lattice
and its dual in general do not have the same shape (e.g., $\ZZ e_{1}\oplus\ZZ\a e_{2}$
and $\ZZ e_{1}\oplus\ZZ e_{2}/\a$ are dual), they do share the same
group of symmetries. 
\begin{defn}
Let $1\leq d\leq n$, and a $d$-lattice $\lat$. The finite group
$\sym\brac{\lat}:=\left\{ g\in\ort{}\brac{V_{\lat}}:g\lat=\lat\right\} $
is called the \emph{symmetry group} of $\lat$.  
\end{defn}

\begin{claim}
\label{lem: symmetry group of dual lattice}The symmetry groups of
$\lat$ and $\dual{\lat}$ are identical. 
\end{claim}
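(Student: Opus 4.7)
The plan is to establish the two inclusions $\sym(\lat) \subseteq \sym(\dual{\lat})$ and $\sym(\dual{\lat}) \subseteq \sym(\lat)$ separately, noting that by Corollary \ref{cor: the dual lattice is a lattice} both groups sit inside the same ambient orthogonal group $\ort{}(V_\lat) = \ort{}(V_{\dual{\lat}})$, so the statement makes sense.

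For the inclusion $\sym(\lat) \subseteq \sym(\dual{\lat})$, I would take $g \in \sym(\lat)$ and $y \in \dual{\lat}$ and verify directly from the definition of the dual lattice that $gy \in \dual{\lat}$. The key computation is that for every $x \in \lat$,
\[
\langle x, gy \rangle = \langle g^{-1}x, y \rangle,
\]
where I use that $g$ is orthogonal. Since $g^{-1} \in \sym(\lat)$ as well (symmetry groups are groups), $g^{-1}x \in \lat$, and hence the pairing lies in $\ZZ$ because $y \in \dual{\lat}$. This shows $g\dual{\lat} \subseteq \dual{\lat}$. Applying the same argument to $g^{-1}$ in place of $g$ gives $g^{-1}\dual{\lat} \subseteq \dual{\lat}$, i.e. $\dual{\lat} \subseteq g\dual{\lat}$, so $g\dual{\lat} = \dual{\lat}$ and $g \in \sym(\dual{\lat})$.

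For the reverse inclusion, the cleanest route is to invoke $\dual{(\dual{\lat})} = \lat$ from Corollary \ref{cor: dual reverses volums}: applying the already-established inclusion with $\dual{\lat}$ in place of $\lat$ yields $\sym(\dual{\lat}) \subseteq \sym(\dual{(\dual{\lat})}) = \sym(\lat)$. This finishes the proof.

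The argument is essentially a one-liner and I do not foresee any real obstacle; the only subtlety worth flagging is that orthogonality of $g$ is used in moving $g$ to the other side of the inner product, and that we need $g^{-1}$ to preserve $\lat$ (automatic since $\sym(\lat)$ is a \emph{group}) in order to conclude the first inclusion without a separate surjectivity argument.
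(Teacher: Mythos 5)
Your proof is correct, and it takes a genuinely different route from the paper's. The paper works with an explicit matrix for $\lat$: it picks a basis matrix $B$, invokes the dual-basis formula $D=B(B^\transpose B)^{-1}$ from Proposition \ref{prop: dual lattice basis}, encodes the symmetry $k$ as an integral change of basis $kB=B\gamma$ with $\gamma\in\gl d(\ZZ)$, and then verifies by a matrix computation that $kD=D\gamma^{-\transpose}$, concluding $k\in\sym(\dual\lat)$. You instead work directly from the defining pairing condition $\dual\lat=\{y\in V_\lat:\langle x,y\rangle\in\ZZ\ \forall x\in\lat\}$: moving $g$ across the inner product via orthogonality and using that $g^{-1}$ also preserves $\lat$. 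Both close the argument identically, via $\dual{(\dual\lat)}=\lat$ and the chain $\sym(\lat)\subseteq\sym(\dual\lat)\subseteq\sym(\lat)$. Your version is coordinate-free and does not depend on the dual-basis formula or on $\gl d(\ZZ)$ bookkeeping, so it is a bit more self-contained and conceptual; the paper's version has the mild advantage of producing the explicit transformed change-of-basis matrix $\gamma^{-\transpose}$, which can be useful if one wants to track how the symmetry acts on coordinates. Both are sound.
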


\begin{proof}
Let $B_{n\times d}$ be a matrix for $\lat$, so that $D=B\left(B^{\transpose}B\right)^{-1}$
is a matrix for $\dual{\lat}$. Assume $k\in\ort n\brac{\RR}$ is
in the symmetry group of $\lat$, i.e. there exists $\ga\in\gl d\left(\ZZ\right)$
such that $kB=B\ga$. It follows that $kD=B\ga^{-\transpose}$, since
$kD=kB\left(B^{\transpose}B\right)^{-1}=kB\left(\left(kB\right)^{\transpose}\left(kB\right)\right)^{-1}$
and this equals 
\[
=B\ga\brac{\left(B\ga\right)^{\transpose}\left(B\ga\right)}^{-1}=B\ga\brac{\ga^{\transpose}B^{\transpose}B\ga}^{-1}=B\ga\cdot\ga^{-1}\brac{B^{\transpose}B}^{-1}\ga^{-\transpose}=D\ga^{-\transpose}.
\]
This means that $k$ is also in the symmetry group of $\dual{\lat}$,
so $\sym\left(\lat\right)\subseteq\sym\brac{\dual{\lat}}\subseteq\sym\brac{\dual{\brac{\dual{\lat}}}}=\sym\left(\lat\right)$,
resulting in $\sym\left(\lat\right)=\sym\brac{\dual{\lat}}$.
\end{proof}
We conclude our introduction to dual lattices with a short discussion
on the successive minima of the dual lattice. 
\begin{thm}[{\cite[Thm. 2.1]{Banaszczyk93}, see also \cite[Chapter VIII.5, Thm. VI]{Cassels71}}]
\label{thm: successive minima of dual lattice}If $\lm_{1},\ldots,\lm_{k}$
are the successive minima of a $k$-lattice and $\lm_{1}^{*},\ldots,\lm_{k}^{*}$
of its dual, then $1\leq\lm_{j}\lm_{k-j+1}^{*}\leq k$ for every $1\leq j\leq k$.
\end{thm}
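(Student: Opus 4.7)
The plan is to treat the lower and upper bounds separately, since they have very different flavors. The lower bound $\lm_j \lm_{k-j+1}^{*} \geq 1$ follows from an elementary duality-pairing and dimension-count argument, while the upper bound $\lm_j \lm_{k-j+1}^{*} \leq k$ is the hard direction: it is Banaszczyk's improvement over the classical Minkowski--Mahler bound of $k!$, and requires harmonic-analytic input via Gaussian measures on lattices.

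For the lower bound, first choose linearly independent $v_1,\ldots,v_j \in \lat$ with $\|v_i\| \leq \lm_j$, realizing the first $j$ successive minima of $\lat$, and linearly independent $w_1,\ldots,w_{k-j+1} \in \dual{\lat}$ with $\|w_l\| \leq \lm_{k-j+1}^{*}$, realizing the first $k-j+1$ successive minima of $\dual{\lat}$. Both sets lie inside the common $k$-dimensional ambient space $V_{\lat} = V_{\dual{\lat}}$ (Corollary \ref{cor: the dual lattice is a lattice}). If every pairing $\langle v_i,w_l\rangle$ vanished, then $\sp{\RR}{w_1,\ldots,w_{k-j+1}}$ would lie in the orthogonal complement (inside $V_\lat$) of $\sp{\RR}{v_1,\ldots,v_j}$, which has dimension only $k-j < k-j+1$, a contradiction. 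Hence some pair $(v_i,w_l)$ satisfies $\langle v_i,w_l\rangle \neq 0$; since $w_l \in \dual{\lat}$ and $v_i \in \lat$, this inner product is a nonzero integer, so $|\langle v_i,w_l\rangle| \geq 1$, and Cauchy--Schwarz yields $1 \leq \|v_i\|\cdot\|w_l\| \leq \lm_j \lm_{k-j+1}^{*}$.

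For the upper bound I would follow Banaszczyk's Gaussian-theoretic approach. Define $\rho(x) = e^{-\pi\|x\|^2}$ and, for a rank-$k$ lattice $L$, the theta series $\rho(L) = \sum_{x \in L}\rho(x)$. Poisson summation gives the duality formula $\rho(L+u) = \covol{L}^{-1}\sum_{y \in \dual{L}} \rho(y)e^{2\pi i\langle y,u\rangle}$ for $u \in V_L$, which in particular implies $\rho(L+u) \leq \rho(L)$ for every $u$. The crucial input is Banaszczyk's concentration lemma: for every $c \geq (2\pi)^{-1/2}$ and every $u$,
\[
\rho\brac{(L+u) \setminus B_{c\sqrt{k}}} \;\leq\; \brac{c\sqrt{2\pi e}\cdot e^{-\pi c^{2}}}^{k}\rho(L),
\]
whose right-hand side is exponentially small as soon as $c > 1/\sqrt{2\pi}$. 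The transference then proceeds by contradiction: if $\lm_j \lm_{k-j+1}^{*} > k$ for some $j$, one rescales so that $\lm_{k-j+1}^{*} = c\sqrt{k}$ with $c$ just above $1/\sqrt{2\pi}$, passes to the sublattice $M \subset \dual{\lat}$ generated by $k-j+1$ linearly independent short vectors, and uses Poisson summation together with the concentration lemma on $M$ to force $\lat$ itself to contain $j$ linearly independent vectors of length strictly below the assumed value of $\lm_j$---contradicting its definition.

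The main obstacle is of course the concentration lemma, since it encodes the entire quantitative gain over Minkowski's second theorem. The cleanest route expands $\rho(L+u)$ by Poisson into a sum over $\dual{L}$, bounds each Fourier coefficient trivially by $\rho(y)$, and then bootstraps via a careful rescaling of the Gaussian parameter so that bulk mass on $L$ translates into controlled mass on $\dual{L}$. Once the case $j = k$ (i.e.\ $\lm_k\lm_1^{*} \leq k$) is in hand, the full range of $j$ follows by duality together with passage to $j$-dimensional projections of $\dual{\lat}$ and $(k-j)$-dimensional sublattices of $\lat$, using Corollary \ref{cor: dual reverses volums} to keep track of covolumes under duality.
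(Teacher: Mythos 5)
The paper does not prove this theorem: it is cited as a black box from Banaszczyk (for the upper bound $\lm_j\lm_{k-j+1}^{*}\le k$) and Cassels (for the lower bound), so there is no in-paper proof to compare yours against. Evaluating your proposal on its own merits:

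Your lower-bound argument is correct and complete, and it is essentially the argument one finds in Cassels: pick $j$ linearly independent $v_i\in\lat$ with $\norm{v_i}\le\lm_j$ and $k-j+1$ linearly independent $w_l\in\dual{\lat}$ with $\norm{w_l}\le\lm_{k-j+1}^{*}$, observe that $j+(k-j+1)=k+1>k=\dim V_\lat$ forces some pairing $\dbrac{v_i,w_l}$ to be nonzero, hence a nonzero integer, and Cauchy--Schwarz finishes.

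The upper bound, as you yourself note, remains a roadmap rather than a proof. The entire quantitative content of Banaszczyk's improvement (from the classical $(k!)^2$-type transference constants down to $k$) lives inside the concentration lemma, and your one-line description of its proof---expand by Poisson, bound each Fourier coefficient trivially, bootstrap via a rescaling of the Gaussian parameter---compresses a genuinely delicate comparison between $\rho(sL)$ and $\rho(L)$ into a phrase; stating the lemma is not proving it. Two further points. First, the transference step as you describe it ("rescale so that $\lm_{k-j+1}^{*}=c\sqrt{k}$ with $c$ just above $1/\sqrt{2\pi}$, pass to the sublattice $M$, and use Poisson plus concentration to force short vectors in $\lat$") is the right shape, but the contradiction is extracted by comparing masses $\rho$ on cosets, and you do not say how the existence of $j$ independent short vectors in $\lat$ actually drops out of that comparison. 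Second, your closing claim that once $j=k$ is handled ``the full range of $j$ follows by duality together with passage to $j$-dimensional projections of $\dual{\lat}$ and $(k-j)$-dimensional sublattices of $\lat$'' is not how Banaszczyk's proof goes (he treats all $j$ directly), and the reduction you describe is itself nontrivial: the vectors realizing $\lm_1,\dots,\lm_{j-1}$ need not lie in any prescribed $(j-1)$-dimensional subspace, so relating $\lm_j(\lat)$ to minima of a sublattice or of a projection would require its own argument, which you do not supply.
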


From this theorem, we conclude the following lemma. Let $C\left(k\right)>0$
be such that every $k$-dimensional lattice $L$ with successive minima
$\lm_{1},\ldots,\lm_{k}$ satisfies that 
\begin{equation}
\frac{1}{C\left(k\right)}\lm_{1}\cdots\lm_{k}\leq\covol L\leq\lm_{1}\cdots\lm_{k};\label{eq: Minkowski successive minima}
\end{equation}
such $C\left(k\right)$ exists by Minkowski's Second Theorem \cite[chapter VIII.2, theorem 1]{Cassels71}. 
\begin{lem}
\label{lem: covolume of sub-lattices of factor and perp}Let $\qlat$
be a $k$-lattice in $\RR^{n}$, with $1\leq k\leq n-1$. Then for
$C\left(\cdot\right)$ as in Formula \ref{eq: Minkowski successive minima},
and every $1\leq j\leq k$,
\[
\frac{\covol{\qlat^{j}}}{C\left(k-j\right)k^{j}}\leq\frac{\covol{\left(\dual{\qlat}\right)^{k-j}}}{\covol{\dual{\qlat}}}\leq C\left(j\right)C\left(k\right)\covol{\qlat^{j}}.
\]
\end{lem}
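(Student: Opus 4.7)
My approach is to pass through the successive minima of $\qlat$ and its dual. Denote by $\lm_1 \leq \cdots \leq \lm_k$ the successive minima of $\qlat$ and by $\lm_1^* \leq \cdots \leq \lm_k^*$ those of $\dual{\qlat}$. The lemma will follow from combining Minkowski's Second Theorem, in the form of the inequalities in (\ref{eq: Minkowski successive minima}), with the duality estimates $1 \le \lm_i \lm_{k-i+1}^* \le k$ from Theorem \ref{thm: successive minima of dual lattice}.

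First I would argue that for the sublattices $\qlat^j$ and $(\dual{\qlat})^{k-j}$ appearing in the lemma --- which, in the intended usage, come from bases lying inside the Siegel-type fundamental domain $\groupfund{k}$, and are therefore spanned by vectors realizing the first $j$ (respectively $k-j$) successive minima of the ambient lattice --- the successive minima of $\qlat^j$ itself coincide with $\lm_1, \ldots, \lm_j$, and similarly for $(\dual{\qlat})^{k-j}$. Applying (\ref{eq: Minkowski successive minima}) to $\qlat^j$, to $(\dual{\qlat})^{k-j}$, and to $\dual{\qlat}$ then yields
\[
\covol{\qlat^j} \;\le\; \lm_1 \cdots \lm_j \;\le\; C(j)\,\covol{\qlat^j},
\]
together with $\tfrac{1}{C(k-j)}\lm_1^*\cdots \lm_{k-j}^* \le \covol{(\dual{\qlat})^{k-j}} \le \lm_1^*\cdots \lm_{k-j}^*$ and $\tfrac{1}{C(k)}\lm_1^*\cdots \lm_{k}^* \le \covol{\dual{\qlat}} \le \lm_1^*\cdots \lm_{k}^*$.

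Next I would form the ratio $\covol{(\dual{\qlat})^{k-j}}/\covol{\dual{\qlat}}$, cancel the common factor $\lm_1^* \cdots \lm_{k-j}^*$, and obtain
\[
\frac{1}{C(k-j)\,\lm_{k-j+1}^*\cdots\lm_k^*} \;\le\; \frac{\covol{(\dual{\qlat})^{k-j}}}{\covol{\dual{\qlat}}} \;\le\; \frac{C(k)}{\lm_{k-j+1}^*\cdots \lm_k^*}.
\]
Multiplying the duality inequalities $1 \le \lm_i \lm_{k-i+1}^* \le k$ over $i = 1, \ldots, j$ gives
\[
\frac{\lm_1 \cdots \lm_j}{k^j} \;\le\; \frac{1}{\lm_{k-j+1}^*\cdots \lm_k^*} \;\le\; \lm_1\cdots \lm_j,
\]
and substituting the sandwich $\lm_1 \cdots \lm_j \in \bigl[\covol{\qlat^j},\; C(j)\covol{\qlat^j}\bigr]$ from the first step recovers exactly the two-sided estimate claimed in the lemma, with constants $C(k-j)k^j$ on the left and $C(j)C(k)$ on the right.

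The main obstacle I anticipate is the reduction-theoretic input mentioned above: namely, that the first $j$ successive minima of $\qlat^j$ agree (or at least are comparable, up to factors that can be absorbed into the $C(\cdot)$'s) with $\lm_1, \ldots, \lm_j$. This is automatic when $\qlat^j$ is generated by independent vectors realizing those minima, which is the natural convention one inherits from Siegel reduced bases in $\groupfund{k}$. Once this is granted, the remainder of the argument is the short multiplicative manipulation of the sandwich inequalities outlined above, and no further ingredients beyond Minkowski's Second Theorem and Theorem \ref{thm: successive minima of dual lattice} are needed.
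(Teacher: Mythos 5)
Your proof is correct and takes essentially the same route as the paper's: both combine Minkowski's second theorem (Formula \ref{eq: Minkowski successive minima}) with the Banaszczyk duality estimate (Theorem \ref{thm: successive minima of dual lattice}) and then multiply out the resulting sandwich inequalities, the only cosmetic difference being that you assemble the ratio $\covol{(\dual{\qlat})^{k-j}}/\covol{\dual{\qlat}}$ first and then substitute, whereas the paper chains the inequalities in one line. You also usefully make explicit the convention that $\qlat^{j}$ and $(\dual{\qlat})^{k-j}$ are spanned by vectors realizing the leading successive minima of the respective ambient lattices (so that the $\lm_i$'s and $\lm_i^*$'s carry over), which the paper's proof relies on without comment.
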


\begin{proof}
From Theorem \ref{thm: successive minima of dual lattice} and Formula
\ref{eq: Minkowski successive minima} it follows that for every $1\leq j\leq k$,
\[
\covol{\qlat^{j}}\leq\lm_{1}\cdots\lm_{j}\leq\frac{k^{j}}{\dual{\lm_{k}}\cdots\dual{\lm_{k-j+1}}}
\]
\[
\leq\frac{k^{j}}{\dual{\lm_{k}}\cdots\dual{\lm_{k-j+1}}}\cdot\frac{\dual{\lm_{1}}\cdots\dual{\lm_{k}}}{\covol{\dual{\qlat}}}=k^{j}\cdot\frac{\dual{\lm_{1}}\cdots\dual{\lm_{k-j}}}{\covol{\dual{\qlat}}}\leq\frac{k^{j}\cdot C\left(k-j\right)\covol{\brac{\dual{\qlat}}^{k-j}}}{\covol{\dual{\qlat}}},
\]
which proves the left-hand side inequality. The right-hand side inequality
is proved similarly.
\end{proof}

\section{Factor lattices}

The term was coined by Schmidt in \cite{Schmidt_68}, for primitive
integral lattices. We extend it here to general lattices, that are
not necessarily integral; for this, we begin by extending the definition
of primitiveness to lattices that are not necessarily integral. 
\begin{defn}
Assume $\latfull$ is a full lattice in $\RR^{n}$, and $\lat$ is
a $d$-lattice that is contained in $\latfull$. We say that $\lat$
is \emph{primitive} inside (or with respect to) $\latfull$ if $\lat=\latfull\cap V_{\lat}$. 
\end{defn}

Note that when $\latfull=\ZZ^{n}$, this definition agrees with the
standard definition of a primitive lattice. Indeed, in this case,
we will call $\lat$ primitive (and omit the ``w.r.t.\ $\ZZ^{n}$''). 
\begin{defn}
Given a $d$-lattice $\lat$ that is primitive inside $\latfull$,
define the factor lattice of $\lat$ (w.r.t.\ $\latfull$), denoted
$\factorg{\lat}{\latfull}$, as the orthogonal projection of $\latfull$
into the space $\perpen{\brac{\sp{\RR}{\lat}}}$. When $\lat$ is
primitive inside $\ZZ^{n}$, we omit the ``w.r.t.\ $\ZZ^{n}$''
from the name and the notation: we denote $\factor{\lat}$ and refer
to it as the factor lattice of $\lat$.
\end{defn}

For example, the factor lattice of $\ZZ\brac{1,-1}$ is $\ZZ\brac{\frac{1}{2},\frac{1}{2}}$.
A priori, it is not clear that $\factorg{\lat}{\latfull}$ is indeed
a lattice; the following proposition assures us that this is the case. 
\begin{prop}
\label{prop: factor is quotient}For a $d$-lattice $\lat$ that is
primitive inside a full lattice $\latfull$, consider the inner product
on the quotient $\RR^{n}/V_{\lat}$:
\[
\left\langle x+V_{\lat},y+V_{\lat}\right\rangle _{\RR^{n}/V_{\lat}}:=\left\langle \pi(x),\pi(y)\right\rangle ,
\]
where $\pi:\RR^{n}\to\perpen{V_{\lat}}$ is the orthogonal projection
and $\left\langle \cdot,\cdot\right\rangle $ is the standard inner
product on $\RR^{n}$. Then the quotient lattice $\latfull/\lat$
with $\left\langle \cdot,\cdot\right\rangle _{\RR^{n}/V_{\lat}}$
is isometric to the factor lattice $\factorg{\lat}{\latfull}$ with
$\left\langle \cdot,\cdot\right\rangle $. 
\end{prop}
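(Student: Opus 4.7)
The plan is to exhibit an explicit isometric isomorphism, namely the map induced by the orthogonal projection $\pi:\RR^{n}\to\perpen{V_{\lat}}$:
\[
\phi:\latfull/\lat\;\longrightarrow\;\factorg{\lat}{\latfull},\qquad x+\lat\;\longmapsto\;\pi(x).
\]
Practically all the work is to verify that $\phi$ is a well-defined bijective group homomorphism; preservation of the inner product will then be automatic from how $\langle\cdot,\cdot\rangle_{\RR^{n}/V_{\lat}}$ was defined.

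First I would check that $\phi$ is well defined and $\ZZ$-linear. Since $\lat\subset V_{\lat}$ and $\pi$ kills $V_{\lat}$, any two representatives $x,x'$ of the same coset $x+\lat$ have $\pi(x)=\pi(x')$, so $\phi$ does not depend on the choice of representative; linearity of $\pi$ then gives that $\phi$ is a group homomorphism. Surjectivity is immediate from the definition $\factorg{\lat}{\latfull}=\pi(\latfull)$.

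The one step that is not purely formal is injectivity, and this is precisely where primitivity of $\lat$ inside $\latfull$ enters. If $\phi(x+\lat)=\phi(x'+\lat)$ for $x,x'\in\latfull$, then $\pi(x-x')=0$, i.e.\ $x-x'\in V_{\lat}$. But $x-x'$ also lies in $\latfull$, so $x-x'\in\latfull\cap V_{\lat}$, which by the primitivity hypothesis equals $\lat$. Hence $x+\lat=x'+\lat$, proving $\phi$ is injective. (This also transparently shows why the statement would fail without primitivity: one would only get $\latfull\cap V_{\lat}$, a sublattice strictly containing $\lat$ in the non-primitive case, in the kernel.)

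Finally, to see that $\phi$ is an isometry, I would just unfold the definitions: for $x,y\in\latfull$,
\[
\bigl\langle \phi(x+\lat),\phi(y+\lat)\bigr\rangle=\bigl\langle\pi(x),\pi(y)\bigr\rangle=\bigl\langle x+V_{\lat},y+V_{\lat}\bigr\rangle_{\RR^{n}/V_{\lat}},
\]
where the second equality is literally the definition of the quotient inner product. This finishes the isometry between $\latfull/\lat$ (viewed as a subset of $\RR^{n}/V_{\lat}$ with its quotient inner product) and $\factorg{\lat}{\latfull}\subset\perpen{V_{\lat}}$ with the restriction of the standard inner product on $\RR^{n}$. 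I expect no technical obstacle; the only conceptual point is making primitivity do the work in the injectivity step.
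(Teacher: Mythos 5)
Your proof is correct and takes the same approach as the paper: the paper's proof is the one-line assertion that $x+\lat\mapsto\pi(x)$ is a group isomorphism and an isometry by definition, and you simply spell out the details (well-definedness, surjectivity, injectivity via primitivity, and the isometry computation) that the paper leaves implicit.
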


\begin{proof}
The group isomorphism $x+\lat\mapsto\pi(x)$ from $\latfull/\lat$
to $\factorg{\lat}{\latfull}$ is an isometry by definition.
\end{proof}
Then $\factorg{\lat}{\latfull}$ is a lattice, because it is isometric
to one. The above also demonstrates that it is necessary to require
that $\lat$ is primitive inside $\latfull$; otherwise, $\latfull/\lat$
(and therefore $\factorg{\lat}{\latfull}$) is a finite group. 
\begin{prop}
\label{prop: factor covolume}$\covol{\factorg{\lat}{\latfull}}=\covol{\latfull}/\covol{\lat}$. 
\end{prop}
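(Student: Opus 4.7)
The plan is to compute the covolume of $\factorg{\lat}{\latfull}$ directly from a Gram matrix, using the fact that primitiveness of $\lat$ in $\latfull$ allows one to extend a basis of $\lat$ to a basis of $\latfull$.

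First I would choose a basis $\base = \base_{n\times d}$ for $\lat$ and, invoking the primitiveness of $\lat$ inside $\latfull$, extend it to a basis $(\base\mid\basec)$ of $\latfull$, where $\basec = \basec_{n\times(n-d)}$. Writing $\pi:\RR^{n}\to\perpen{V_{\lat}}$ for the orthogonal projection, the columns of $\pi(\basec)$ project bijectively to generators of $\latfull/\lat$ under the isometry of Proposition \ref{prop: factor is quotient}; hence $\pi(\basec)$ is a basis for $\factorg{\lat}{\latfull}$. Consequently
\[
\covol{\latfull}^{2}=\det\!\left((\base\mid\basec)^{\transpose}(\base\mid\basec)\right),\qquad \covol{\factorg{\lat}{\latfull}}^{2}=\det\!\left(\pi(\basec)^{\transpose}\pi(\basec)\right).
\]

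Next I would relate these two determinants via the Schur complement. The block form
\[
(\base\mid\basec)^{\transpose}(\base\mid\basec)=\begin{pmatrix}\base^{\transpose}\base & \base^{\transpose}\basec\\ \basec^{\transpose}\base & \basec^{\transpose}\basec\end{pmatrix}
\]
gives, by the Schur complement formula,
\[
\det\!\left((\base\mid\basec)^{\transpose}(\base\mid\basec)\right)=\det(\base^{\transpose}\base)\cdot\det\!\left(\basec^{\transpose}\basec-\basec^{\transpose}\base(\base^{\transpose}\base)^{-1}\base^{\transpose}\basec\right).
\]
Since $P:=\base(\base^{\transpose}\base)^{-1}\base^{\transpose}$ is the orthogonal projection onto $V_{\lat}$ and $\pi(\basec)=(I-P)\basec$, the symmetry and idempotency of $I-P$ yield $\pi(\basec)^{\transpose}\pi(\basec)=\basec^{\transpose}(I-P)\basec$, which is exactly the Schur complement above.

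Combining the two identities gives $\covol{\latfull}^{2}=\covol{\lat}^{2}\cdot\covol{\factorg{\lat}{\latfull}}^{2}$, and taking positive square roots yields the claim. The main thing to check carefully is that primitiveness is really what allows the basis extension (otherwise the $\basec$ part might not project to a basis of the factor lattice, only to a finite-index sublattice), but this is precisely the content of Proposition \ref{prop: factor is quotient}; after that point the argument is a direct linear-algebraic computation with no further obstacles.
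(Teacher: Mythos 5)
Your proof is correct, but it takes a genuinely different route from the paper's. The paper first rotates $\RR^{n}$ so that $V_{\lat}$ becomes the coordinate subspace $\mathrm{span}(e_{1},\ldots,e_{d})$ (using the rotation-invariance of covolumes); the extended basis matrix $g=[\base\mid\basec]$ then becomes block upper triangular, $g=\left[\begin{smallmatrix}g_{1} & *\\ 0 & g_{2}\end{smallmatrix}\right]$, and the factorization $\covol{\latfull}=\det g=\det g_{1}\cdot\det g_{2}=\covol{\lat}\cdot\covol{\factorg{\lat}{\latfull}}$ drops out. You instead work invariantly with the Gram matrix of $[\base\mid\basec]$ and identify the determinant of the Schur complement of the block $\base^{\transpose}\base$ with $\det\bigl(\pi(\basec)^{\transpose}\pi(\basec)\bigr)$, using that $I-P$ (with $P=\base(\base^{\transpose}\base)^{-1}\base^{\transpose}$) is the symmetric idempotent projecting onto $\perpen{V_{\lat}}$. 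Both proofs rely on the same input from primitiveness (extending a basis of $\lat$ to one of $\latfull$, and the resulting identification of $\pi(\basec)$ with a basis of the factor lattice); the paper's version gains visual transparency from the triangular block form, while yours avoids the preliminary rotation and shows more explicitly that the relation is a direct consequence of the Schur complement identity for Gram determinants. Both are complete and correct.
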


\begin{proof}
since covolumes do not change under rotations, we may assume that
$V_{\lat}=\mathbb{E}_{d}:=\sp{\RR}{e_{1},\ldots,e_{d}}$, where $d=\rank{\lat}$.
Let $\base_{d\times n}$ be a basis for $\lat$; by primitiveness,
it can be completed to a basis $g:=\sbrac{\base|\basec}$ of $\latfull$,
where $g\in\gl n\left(\RR\right)$. Since $\base\subset\mathbb{E}_{d}$,
the matrix $g$ is of the form $g=\left[\begin{smallmatrix}g_{1} & *\\
0 & g_{2}
\end{smallmatrix}\right]$ with $g_{1}\in\gl d\left(\RR\right)$ and $g_{2}\in\gl{n-d}\left(\RR\right)$.
Clearly $\lat=\sbrac{g_{1}|0_{d\times n}}\ZZ^{n}$ and therefore $\covol{\lat}^{2}=\det\brac{\sbrac{\begin{smallmatrix}g_{1} & 0_{d\times n-d}\end{smallmatrix}}\left[\begin{smallmatrix}g_{1}\\
0_{n-d\times d}
\end{smallmatrix}\right]}=\det\left(g_{1}\right)^{2}$, so $\covol{\lat}=\det(g_{1})$. Note that the projection of $g$
to $\perpen{\mathbb{E}_{d}}$ is $\left[\begin{smallmatrix}0_{d\times n-d}\\
g_{2}
\end{smallmatrix}\right]$, so the projection of $\lat_{g}=\latfull$ to $\perpen{\mathbb{E}_{d}}=\perpen{V_{\lat}}$
is the lattice spanned by $\left[\begin{smallmatrix}0_{d\times n-d}\\
g_{2}
\end{smallmatrix}\right]$. By definition this lattice is $\factorg{\lat}{\latfull}$. We conclude
that $\covol{\factorg{\lat}{\latfull}}^{2}=\det\brac{\left[\begin{smallmatrix}0_{d\times n-d}\\
g_{2}
\end{smallmatrix}\right]\sbrac{\begin{smallmatrix}0_{n-d\times d} & g_{2}\end{smallmatrix}}}=\det\left(g_{2}\right)^{2}$, namely $\covol{\factorg{\lat}{\latfull}}=\det\left(g_{2}\right)$.
Finally, $\covol{\latfull}=\det(g)=\det(g_{1})\cdot\det(g_{2})$. 
\end{proof}
In the case where $\latfull=\ZZ^{n}$, namely of primitive integral
lattices, the concepts of a dual, a factor, and the orthogonal lattice
are related in the following way. 
\begin{prop}[\cite{Schmidt_98}]
\label{prop: dual+factor =00003D orthogonal}For $\lat$ primitive,
$\dual{\brac{\factor{\lat}}}=\perpen{\lat}$. 
\end{prop}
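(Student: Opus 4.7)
The plan is to prove $\dual{(\factor{\Lambda})} = \perpen{\Lambda}$ by showing both inclusions directly, leveraging the single key identity
\[
\langle y, \pi(z) \rangle \;=\; \langle y, z \rangle \qquad \text{for all } y \in \perpen{V_{\Lambda}},\; z \in \mathbb{R}^n,
\]
where $\pi : \mathbb{R}^n \to \perpen{V_{\Lambda}}$ is the orthogonal projection. This holds because $y = \pi(y)$, so that $\langle y,z \rangle = \langle \pi(y), z \rangle = \langle y, \pi(z) \rangle$. I would first observe that both lattices in question are full-rank lattices inside the same subspace $\perpen{V_{\Lambda}}$: the lattice $\perpen{\Lambda} = \mathbb{Z}^n \cap \perpen{V_{\Lambda}}$ by its very definition, and $\dual{(\factor{\Lambda})}$ by Corollary \ref{cor: the dual lattice is a lattice}, since $\factor{\Lambda}$ spans $\perpen{V_{\Lambda}}$.

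For the inclusion $\perpen{\Lambda} \subseteq \dual{(\factor{\Lambda})}$, take $y \in \perpen{\Lambda}$, i.e.\ $y \in \mathbb{Z}^n \cap \perpen{V_{\Lambda}}$. Every $x \in \factor{\Lambda}$ has the form $x = \pi(z)$ for some $z \in \mathbb{Z}^n$; then by the key identity $\langle y, x \rangle = \langle y, z \rangle \in \mathbb{Z}$, so $y$ lies in $\dual{(\factor{\Lambda})}$.

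For the reverse inclusion, take $y \in \dual{(\factor{\Lambda})} \subseteq \perpen{V_{\Lambda}}$. For every $z \in \mathbb{Z}^n$ we have $\pi(z) \in \factor{\Lambda}$, so $\langle y, \pi(z) \rangle \in \mathbb{Z}$; by the key identity this says $\langle y, z \rangle \in \mathbb{Z}$ for all $z \in \mathbb{Z}^n$. Since $\dual{(\mathbb{Z}^n)} = \mathbb{Z}^n$ (Example \ref{exa: Dual of integral lattice}), this forces $y \in \mathbb{Z}^n$, and combined with $y \in \perpen{V_{\Lambda}}$ we conclude $y \in \perpen{\Lambda}$.

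There is no serious obstacle: the only subtlety is to make sure that the ``dual'' operation for $\factor{\Lambda}$ is taken inside its own span $\perpen{V_{\Lambda}}$ (so that elements of $\dual{(\factor{\Lambda})}$ are automatically orthogonal to $V_{\Lambda}$), rather than as a sublattice of the ambient $\mathbb{R}^n$; once that is noted, the identity $\langle y, \pi(z)\rangle = \langle y, z\rangle$ does all the work, and primitivity of $\Lambda$ is used only implicitly, via the fact that $\factor{\Lambda}$ is an actual lattice of rank $n-d$ spanning $\perpen{V_{\Lambda}}$ (Proposition \ref{prop: factor is quotient}).
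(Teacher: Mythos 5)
Your proof is correct, and it takes a genuinely different route from the paper's. The paper's argument is basis-driven: it picks a basis $\mathbf{B}'$ for $\lat$, uses primitivity to complete it to an integral basis $\mathbf{B}\in\sl n(\ZZ)$, passes to the dual basis $\mathbf{D}=\mathbf{B}^{-\mathrm{t}}$ (again in $\sl n(\ZZ)$), and then identifies the lattice spanned by the last $n-d$ columns of $\mathbf{D}$ with $\perpen{\lat}$ (by an orthogonality-plus-rank argument, using primitivity once more) and with $\dual{(\factor{\lat})}$ via Lemma \ref{lem: Ortho-dual}. You instead argue coordinate-free and element-wise: the whole proof rests on the self-adjointness of the orthogonal projection $\pi$ (so that $\langle y,\pi(z)\rangle=\langle y,z\rangle$ when $y\in\perpen{V_\lat}$) together with $\dual{(\ZZ^n)}=\ZZ^n$, with primitivity entering only implicitly through the fact that $\factor{\lat}=\pi(\ZZ^n)$ is an honest lattice spanning $\perpen{V_\lat}$. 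Your version is shorter, avoids the basis-completion step and the auxiliary Lemma \ref{lem: Ortho-dual} entirely, and makes the mechanism more transparent; the paper's version, by contrast, is deliberately stated in terms of dual bases because Lemma \ref{lem: Ortho-dual} is needed anyway for Proposition \ref{prop: All that Dual... and P_d,n} and the matrix formulation plugs directly into the $\gl n$/$\sl n$ coset language used throughout the appendix. One small point you handled correctly and should keep explicit: that $\dual{(\factor{\lat})}$ is by definition taken inside $\sp{\RR}{\factor{\lat}}=\perpen{V_\lat}$, which is exactly what lets you conclude $y\in\perpen{V_\lat}$ at the start of the reverse inclusion.
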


\begin{proof}
Let $\base_{n\times d}^{\prime}$ be a basis for $\lat$, and use
primitiveness to complete it to a basis $\base_{n\times n}$ of $\ZZ^{n}$;
so $\base\in\sl n\left(\ZZ\right)$. Let $\based_{n\times n}$ be
the dual basis of $\base$, namely $\based=\base\brac{\base^{\transpose}\base}^{-1}=\base^{-\transpose}$,
which is also in $\sl n\left(\ZZ\right)$. Let $\based_{n\times n-d}^{\prime}$
denote the last $n-d$ columns of $\based$. Since $\base^{\transpose}\based=\idmat n$,
then $\base^{\prime}$ is orthogonal to $\based^{\prime}$; in particular,
$\lat_{\based^{\prime}}\subseteq\perpen{V_{\base^{\prime}}}\cap\ZZ^{n}=\perpen{V_{\lat}}\cap\ZZ^{n}$,
so by definition of orthogonal lattice $\lat_{\based^{\prime}}\subseteq\perpen{\lat}$.
But this inclusion is in fact an equality, since on both sides there
are primitive lattices in the same space $V_{\lat}$ and of the same
rank. 

We will conclude by showing that $\lat_{\based^{\prime}}$ is also
$\dual{\brac{\factor{\lat}}}$. Indeed, if $\pi:\RR^{n}\to\perpen{V_{\lat}}$
is the orthogonal projection, then $\pi(\base)=\pi(\base^{\prime})$
 is a basis to $\factor{\lat}$, by definition. By Lemma \ref{lem: Ortho-dual},
$\lat_{\based^{\prime}}=\dual{\left(\lat_{\pi\left(\base^{\prime}\right)}\right)}$,
which concludes the proof. 
\end{proof}
\begin{cor}
\label{cor: orthogonal covolume}For $\lat$ primitive, $\covol{\perpen{\lat}}=\covol{\lat}$. 
\end{cor}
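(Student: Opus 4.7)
The plan is to chain together three earlier results that, taken together, give the equality in one line. Recall that we have already established: (i) Proposition \ref{prop: dual+factor =00003D orthogonal}, which identifies $\perpen{\lat}$ with $\dual{(\factor{\lat})}$; (ii) Corollary \ref{cor: dual reverses volums}, which says the covolume of a lattice and of its dual are reciprocal; and (iii) Proposition \ref{prop: factor covolume}, which computes the covolume of a factor lattice as a quotient.

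First I would apply Proposition \ref{prop: factor covolume} with $\latfull = \ZZ^{n}$, whose covolume is $1$, to obtain
\[
\covol{\factor{\lat}} \;=\; \frac{\covol{\ZZ^{n}}}{\covol{\lat}} \;=\; \frac{1}{\covol{\lat}}.
\]
Next, using the identification $\perpen{\lat} = \dual{(\factor{\lat})}$ from Proposition \ref{prop: dual+factor =00003D orthogonal} and the duality identity $\covol{\dual{L}} = \covol{L}^{-1}$ from Corollary \ref{cor: dual reverses volums} (applied to the $(n-d)$-lattice $L = \factor{\lat}$ inside $\perpen{V_{\lat}}$), I would conclude
\[
\covol{\perpen{\lat}} \;=\; \covol{\dual{(\factor{\lat})}} \;=\; \covol{\factor{\lat}}^{-1} \;=\; \covol{\lat}.
\]

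There is no real obstacle here: both Proposition \ref{prop: dual+factor =00003D orthogonal} and Proposition \ref{prop: factor covolume} rely on the primitivity of $\lat$ in $\ZZ^n$ (needed so that a basis of $\lat$ can be extended to a basis of $\ZZ^n$, and so that $\factor{\lat}$ is a lattice rather than a finite quotient), and since the corollary assumes $\lat$ is primitive, the hypotheses of all three inputs are met automatically. The entire proof is a one-line composition.
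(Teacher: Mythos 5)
Your proof is correct and matches the paper's argument exactly: the paper's proof of this corollary simply cites Corollary \ref{cor: dual reverses volums}, Proposition \ref{prop: factor covolume}, and Proposition \ref{prop: dual+factor =00003D orthogonal} together with the observation that $\covol{\ZZ^{n}}=1$, which is precisely the chain you write out explicitly.
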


\begin{proof}
this is a direct consequence of Corollary \ref{cor: dual reverses volums},
Proposition \ref{prop: factor covolume} Proposition \ref{prop: dual+factor =00003D orthogonal},
while noticing that $\covol{\ZZ^{n}}=1$. 
\end{proof}
\begin{rem}
One could wonder if the result of Proposition \ref{prop: dual+factor =00003D orthogonal}
can be extended to general lattices, that are not necessarily primitive,
or even integral. For example, it is quite natural to extend the definition
of $\perpen{\lat}$ for general lattices in a similar fashion to what
we did with $\factorg{\lat}{\latfull}$, namely for a lattice $\lat$
that is primitive inside a full lattice $\latfull$, define the orthogonal
lattice of $\lat$ w.r.t.\ $\latfull$, as $\latfull\cap\perpen{\brac{\sp{\RR}{\lat}}}$.
Say we denote it $\perpeng{\lat}{\latfull}$. Now the question becomes,
is it true that the dual of $\factorg{\lat}{\latfull}$ is $\perpeng{\lat}{\latfull}$.
The answer to this is no! Let us consider two counter examples. 
\begin{enumerate}
\item Consider the lattice $\latfull$ spanned by $\left[\begin{smallmatrix}1 & \sqrt{2}\\
0 & 1
\end{smallmatrix}\right]$, and the lattice $\lat=\sp{\ZZ}{\left[\begin{smallmatrix}1\\
0
\end{smallmatrix}\right]}$ which is primitive inside it. Then by definition $\perpeng{\lat}{\latfull}=\latfull\cap\perpen{V_{\lat}}=\latfull\cap\sp{\RR}{\left[\begin{smallmatrix}0\\
1
\end{smallmatrix}\right]}=\left\{ 0\right\} .$ 
\item Take $\latfull:=\ZZ e_{1}\oplus\ZZ\a e_{2}$ for some $\a>0$. Clearly
$\latfull$ has covolume $\a$. The lattice $\lat=\ZZ e_{1}$ is primitive
inside $\latfull$, and has covolume $1$. Then $\perpeng{\lat}{\latfull}=\ZZ\a e_{2}$
has covolume $\a$, and in particular it cannot be that $\perpeng{\lat}{\latfull}$
is the dual of $\factorg{\lat}{\latfull}$, because otherwise it
would have covolume $\brac{\covol{\latfull}/\covol{\lat}}^{-1}=\a^{-1}$. 
\end{enumerate}
The first example shows that the definition of $\perpeng{\lat}{\latfull}$
is in a sense meaningless; the second example shows us that $\perpeng{\lat}{\latfull}$
being the dual of $\factorg{\lat}{\latfull}$ fails even when $\latfull$
is integral (which happens when we take $\a\in\ZZ$ in the second
example). 

What could be said in general about the dual of $\factorg{\lat}{\latfull}$
then? (Here $\lat<\latfull<\RR^{n}$, with $\rank{\lat}<\rank{\latfull}\leq n$,
and $\latfull$ must not necessarily be a full lattice). According
to Lemma \ref{lem: Ortho-dual}, we know that a basis for $\dual{\brac{\factorg{\lat}{\latfull}}}$
is obtained in the last $n-d$ columns of the matrix $\dual{\sbrac{\base|\basec}}$,
where $\base_{n\times d}$ is a basis for $\lat$ such that $\sbrac{\base|\basec}$
is a basis for $\latfull$. Now, even though $\dual{\brac{\factorg{\lat}{\latfull}}}$
needs not be integral when $\latfull$ is not $\ZZ^{n}$ (i.e. we
are not in the case of Proposition \ref{prop: dual+factor =00003D orthogonal}),
it is ``almost integral'' when $\latfull<\ZZ^{n}$. Indeed, in this
case $\dual{\brac{\factorg{\lat}{\latfull}}}$ is defined over the
ring $\ZZ\sbrac{\frac{1}{\covol{\latfull}^{2}}}$, by a similar argument
to the one in Example \ref{exa: Dual of integral lattice}. 

$ $
\end{rem}

The space of factor lattices of $d$-lattices in $\RR^{n}$ is the
following:
\[
\factor{\widetilde{\latspace{d,n}}}:=\gl n\left(\RR\right)/\left[\begin{smallmatrix}\gl d\left(\RR\right) & \RR^{d,n-d}\\
0_{n-d\times d} & \gl{n-d}\left(\ZZ\right)
\end{smallmatrix}\right],
\]
since, given a coset of $g\in\gl n\left(\RR\right)$, the projection
of the last $n-d$ columns to the space that is orthogonal to the
span of the first $d$ columns, is the factor lattice of the lattice
spanned by the first $d$ columns, that is clearly an element in $\widetilde{\latspace{d,n}}$. 

It is easy to see that there is a natural bijection between the spaces
$\factor{\widetilde{\latspace{d,n}}}$ and $\widetilde{\latspace{n-d,n}}$:
the factor lattice of a $d$-lattice in $\RR^{n}$ (w.r.t. any full
lattice) is an $\left(n-d\right)$-lattice in $\RR^{n}$; conversely,
if $L\in\widetilde{\latspace{n-d,n}}$, then by taking any $d$-lattice
$\lat$ in the space $\perpen{V_{L}}$, we have that $L$ is the factor
lattice of $\lat$ w.r.t. the full lattice $\lat\oplus L$. In fact,
this bijection is a diffeomorphism between the two spaces:
\begin{prop}
This bijection is a measure preserving diffeomorphism between $\factor{\widetilde{\latspace{d,n}}}$
and $\widetilde{\latspace{n-d,n}}$.
\end{prop}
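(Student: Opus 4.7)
The plan is to realize the claimed bijection $\phi:\factor{\widetilde{\latspace{d,n}}}\to\widetilde{\latspace{n-d,n}}$ as the descent of an explicit smooth map on $\gl n(\RR)$. Write a group element as $[B\mid C]$ with $B$ the first $d$ columns and $C$ the last $n-d$, and let $\pi_B:=I_n-B(B^{\transpose}B)^{-1}B^{\transpose}$ be the orthogonal projection onto $\perpen{V_B}$. Set
$$\Phi:\gl n(\RR)\to\gl n(\RR),\qquad \Phi([B\mid C])=[\pi_B(C)\mid B].$$
This map is rational in the entries and hence smooth, and its image lies in $\gl n(\RR)$ because $\pi_B(C)$ spans $\perpen{V_B}$ whenever the columns of $[B\mid C]$ are linearly independent. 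A short block computation shows that right multiplication by $h=\left[\begin{smallmatrix}A & E\\ 0 & D\end{smallmatrix}\right]\in H_1:=\left[\begin{smallmatrix}\gl d(\RR) & \RR^{d,n-d}\\ 0 & \gl{n-d}(\ZZ)\end{smallmatrix}\right]$ (the subgroup defining $\factor{\widetilde{\latspace{d,n}}}$) is intertwined by $\Phi$ with right multiplication by $\left[\begin{smallmatrix}D & 0\\ 0 & A\end{smallmatrix}\right]\in H_2:=\left[\begin{smallmatrix}\gl{n-d}(\ZZ) & \RR^{n-d,d}\\ 0 & \gl d(\RR)\end{smallmatrix}\right]$ (the subgroup defining $\widetilde{\latspace{n-d,n}}$). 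Consequently $\Phi$ descends to a smooth $\phi:\factor{\widetilde{\latspace{d,n}}}\to\widetilde{\latspace{n-d,n}}$, which on lattices is exactly the map $(V,L)\mapsto L$ of the statement.

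For the inverse, the symmetric map $\Psi:[C\mid B]\mapsto[\pi_C(B)\mid C]$ descends analogously to a smooth $\psi:\widetilde{\latspace{n-d,n}}\to\factor{\widetilde{\latspace{d,n}}}$. A direct calculation gives $\Psi\circ\Phi([B\mid C])=[B\mid\pi_B(C)]$, which differs from $[B\mid C]$ by right-multiplication by $\left[\begin{smallmatrix}I_d & -(B^{\transpose}B)^{-1}B^{\transpose}C\\ 0 & I_{n-d}\end{smallmatrix}\right]\in H_1$; the mirror calculation yields $\Phi\circ\Psi\equiv\mathrm{id}\bmod H_2$. Both lifts being smooth, $\phi$ is a diffeomorphism. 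Measure preservation I would then deduce from $\ort n(\RR)$-equivariance: since $\pi_{kB}=k\,\pi_B\,k^{-1}$ for every $k\in\ort n(\RR)$, the map $\Phi$ commutes with the left $\ort n(\RR)$-action, and so does $\phi$. Both quotients carry natural left $\ort n(\RR)$-invariant Radon measures, each unique up to a positive scalar once compatible left Haar measures on $H_1,H_2$ are fixed; equivariance then forces $\phi$ to intertwine them up to a positive constant, which is normalized to $1$ by the natural Haar choice.

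The main obstacle is exactly this normalization: neither $H_1$ nor $H_2$ is unimodular, because the Levi $\gl d(\RR)\times\gl{n-d}(\RR)$ acts non-trivially on the $\RR^{d,n-d}$ and $\RR^{n-d,d}$ unipotent parts, and consequently neither quotient admits a genuinely $\gl n(\RR)$-invariant measure. I would pin down the scalar by factoring $\gl n(\RR)=\ort n(\RR)\cdot P$ in an Iwasawa-type decomposition as in Section \ref{sec: RI coordinates}, which exhibits both quotients as fibre bundles over the Grassmannian $\gras{d,n}=\gras{n-d,n}$ (the base being identified via $V\leftrightarrow\perpen V$) with fibre $\gl{n-d}(\RR)/\gl{n-d}(\ZZ)$; under this factorization $\phi$ is the identity on both base and fibre, so its Jacobian is $1$.
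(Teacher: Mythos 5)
Your construction of the diffeomorphism follows the paper's approach exactly for the forward direction: both of you define the lift $\gl n(\RR)\to\gl n(\RR)$ by $[B\mid C]\mapsto[\pi_B(C)\mid B]$ (the paper writes $\pi_B$ as $\mathbf{P}$), note that it is algebraic and descends modulo the right subgroup, and your explicit intertwining computation with $h=\left[\begin{smallmatrix}A & E\\ 0 & D\end{smallmatrix}\right]\mapsto\left[\begin{smallmatrix}D & 0\\ 0 & A\end{smallmatrix}\right]$ is a welcome piece of bookkeeping that the paper leaves implicit. Your inverse lift $[C\mid B]\mapsto[\pi_C(B)\mid C]$ is \emph{not} the paper's ($[C\mid B]\mapsto[B\mid \pi_B(C)]$), but it descends to the same map, and you arrive at the same identity $\Psi\circ\Phi([B\mid C])=[B\mid\pi_B(C)]$ differing from $[B\mid C]$ by right multiplication by $\left[\begin{smallmatrix}I_d & -(B^{\transpose}B)^{-1}B^{\transpose}C\\ 0 & I_{n-d}\end{smallmatrix}\right]$. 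So the diffeomorphism part is essentially the paper's proof, plus a second valid choice of inverse lift.

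Where you genuinely diverge is the measure-preservation clause: the paper's proof of this proposition in fact establishes \emph{only} the diffeomorphism and never returns to the measure claim, whereas you attempt an argument. However, your argument as written has a gap. $\ort n(\RR)$ does not act transitively on $\gl n(\RR)/H_1$, so $\ort n(\RR)$-invariance by itself does not pin a Radon measure down to a positive scalar; the usual uniqueness (Weil's formula) concerns measures quasi-invariant under all of $\gl n(\RR)$ with a prescribed modular character, and $\phi$ is \emph{not} $\gl n(\RR)$-equivariant, so one cannot directly conclude that $\phi_*$ sends the distinguished quasi-invariant measure to the distinguished one on the other side. You correctly diagnose this in your final paragraph, and the fix you sketch---expressing both quotients in Iwasawa-type coordinates as bundles over the Grassmannian with fibre $\gl{n-d}(\RR)/\gl{n-d}(\ZZ)$, and checking that $\phi$ is the identity in those coordinates---is the right way to finish, but it is a sketch and not a proof; one has to actually compute the Jacobian of the lift in those coordinates (or verify that the measures the paper works with really are the product measures in these coordinates) rather than assert it.
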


\begin{proof}
Write a matrix $g\in\gl n\left(\RR\right)$, as two rectangular matrices
$g=\left[\base|\basec\right]$, where $\base_{n\times d}$ and $\basec_{n-d\times n}$.
Denote $\mathbf{P}=\idmat n-\base\brac{\base^{\transpose}\base}^{-1}\base^{\transpose}\in\gl n\left(\RR\right)$,
the orthogonal projection to $\perpen{V_{\base}}$, and define a map
from $\gl n\left(\RR\right)$ to itself by $\left[\base|\basec\right]\mapsto\left[\mathbf{P}\basec|\base\right]$.
This map is algebraic and therefore smooth, and since it is defined
modulo the group $\left[\begin{smallmatrix}\gl d\left(\RR\right) & \RR^{d,n-d}\\
0_{n-d\times d} & \gl{n-d}\left(\ZZ\right)
\end{smallmatrix}\right]$, it descends to a map from $\factor{\widetilde{\latspace{d,n}}}$
to $\widetilde{\latspace{n-d,n}}$. Note that the lattice spanned
by the first $n-d$ columns of the resulting matrix is the factor
lattice of $\lat_{\base}$ w.r.t. $\lat_{g}$. In particular, the
resulting matrix lies in the coset of $\gl n\left(\RR\right)$ representing
(in $\widetilde{\latspace{n-d,n}}$) the $\brac{n-d}$\textendash lattice
$\factorg{\lat_{\base}}{\lat_{g}}$, so it is indeed the direction
$\factor{\widetilde{\latspace{d,n}}}\to\widetilde{\latspace{n-d,n}}$
in the bijection we described. 

Conversely, consider $g=\left[\basec|\base\right]\in\gl n\left(\RR\right)$
with $\base_{n\times d}$ and $\basec_{n-d\times n}$ and define the
map $\left[\basec|\base\right]\mapsto\left[\base|\mathbf{P}\basec\right]$.
Again, it is smooth since it is algebraic, and it is defined modulo
the group $\left[\begin{smallmatrix}\gl{n-d}\left(\ZZ\right) & \RR^{n-d,d}\\
0_{d\times n-d} & \gl d\left(\RR\right)
\end{smallmatrix}\right]$ so it descends to a map from $\widetilde{\latspace{n-d,n}}$ to $\factor{\widetilde{\latspace{d,n}}}$.
Again we note that the lattice spanned by the last $n-d$ columns
of the resulting matrix is the factor lattice for the lattice spanned
by the first $d$ columns w.r.t. $\lat_{g}$, so this is indeed the
direction $\factor{\widetilde{\latspace{d,n}}}\leftarrow\widetilde{\latspace{n-d,n}}$
in the bijection we described. 

Finally, let us check that these two maps are the inverses of one
another:
\[
\left[\base|\basec\right]\mapsto\left[\mathbf{P}\basec|\base\right]\to\left[\base|\mathbf{P}\mathbf{P}\basec\right]=\left[\base|\mathbf{P}\basec\right],
\]
where the equality is because  $\mathbf{P}^{2}=\mathbf{P}$. Now
it is only left to observe that $\left[\base|\basec\right]\equiv\left[\base|\mathbf{P}\basec\right]$
modulo $\left[\begin{smallmatrix}\gl{n-d}\left(\ZZ\right) & \RR^{n-d,d}\\
0_{d\times n-d} & \gl d\left(\RR\right)
\end{smallmatrix}\right]$.
\end{proof}
\bibliographystyle{alpha}
\phantomsection\addcontentsline{toc}{section}{\refname}\bibliography{bib_for_dlattices}

\end{document}